\documentclass[11pt, a4paper, reqno]{amsart}

\usepackage{amsmath, amssymb, amsthm, mathtools}
\usepackage{tikz-cd}
\usepackage{enumitem}
\usepackage{graphicx}
\usepackage{xcolor}
\usepackage[colorlinks=true, linkcolor=blue, citecolor=red, urlcolor=blue]{hyperref}
\usepackage[capitalize,nameinlink]{cleveref}
\usepackage{float}
\usepackage{caption}

\newtheorem{theorem}{Theorem}[section]
\newtheorem{lemma}[theorem]{Lemma}
\newtheorem{proposition}[theorem]{Proposition}

\theoremstyle{definition}
\newtheorem{definition}[theorem]{Definition}
\newtheorem{example}[theorem]{Example}

\theoremstyle{remark}
\newtheorem{remark}[theorem]{Remark}
\newtheorem{claim}[theorem]{Claim}

\DeclareMathOperator{\Aut}{Aut}
\DeclareMathOperator{\Pic}{Pic}
\DeclareMathOperator{\Cl}{Cl}
\DeclareMathOperator{\Spec}{Spec}
\DeclareMathOperator{\Proj}{Proj}

\newcommand{\R}{\mathbb{R}}
\newcommand{\C}{\mathbb{C}}
\newcommand{\Z}{\mathbb{Z}}
\newcommand{\Q}{\mathbb{Q}}
\newcommand{\PP}{\mathbb{P}}

\newcommand{\GG}{\mathbb{G}}

\newcommand{\Autzero}{\Aut^0}

\title[Symmetries of Toric Varieties over Characteristic Zero]{On Maximal Symmetries of Toric Varieties over Fields of Characteristic Zero}

\author{Yutaro Naito}
\address{Graduate School of Mathematics, Nagoya University, Furo-cho, Chikusa-ku, Nagoya, 464-8602, Japan}
\email{yutaro.naito.h5@math.nagoya-u.ac.jp}

\subjclass[2020]{Primary 14M25; Secondary 14J50, 14E30, 14G27}

\begin{document}

\begin{abstract}
In this paper, we study complete simplicial toric varieties admitting faithful actions of large symmetric groups. First, we correct a recent classification result by Esser, Ji, and Moraga concerning $4$-dimensional toric varieties with $S_6$-actions over the complex numbers $\mathbb{C}$, providing the complete list of such varieties. Second, we extend the study of maximal symmetric group actions to non-closed fields $k$ of characteristic zero satisfying a certain arithmetic condition (such as $\mathbb{Q}$ or $\mathbb{R}$). Over such fields, we reveal a striking rigidity in dimensions $n \neq 2$, where the maximal symmetric action uniquely restricts the variety to the projective space $\mathbb{P}^n_k$. In sharp contrast, for dimension $n=2$, we discover and classify an infinite family of split and non-split toric surfaces admitting faithful $S_4$-actions by utilizing the equivariant Minimal Model Program and Galois descent.
\end{abstract}

\maketitle

\tableofcontents

\section{Introduction}
\label{sec:intro}

The study of finite group actions on algebraic varieties is a classical and central topic in algebraic geometry. Among various algebraic varieties, toric varieties provide a rich testing ground because their geometric properties can be completely translated into the combinatorial data of fans and lattices. Recently, Esser, Ji, and Moraga \cite{esser2025symmetries} systematically studied finite group actions on complete simplicial toric varieties. They determined the maximum degree $m$ of the symmetric group $S_m$ that can act faithfully on an $n$-dimensional complete simplicial toric variety over the complex numbers $\mathbb{C}$, and they classified the varieties realizing these maximal actions for many dimensions.

However, their classification over $\mathbb{C}$ contains a gap in dimension $n=4$. Furthermore, their study heavily relies on the assumption that the base field is algebraically closed. When we consider toric varieties over non-closed fields, such as the field of rational numbers $\mathbb{Q}$ or the field of real numbers $\mathbb{R}$, the representation theory of finite groups imposes strong arithmetic constraints. Some symmetries that are geometrically possible over $\mathbb{C}$ cannot be realized over non-closed fields. This motivates us to correct the classification over $\mathbb{C}$ and to explore the entirely new landscape of maximal symmetric actions over general fields.

In this paper, we focus on a base field $k$ of characteristic zero satisfying a specific arithmetic condition (\hyperref[cond:star]{\rm ($\star$)} in \cref{sec:non_closed}), which requires that the equation $a^2 + b^2 = -1$ has no solutions in $k$. Under this natural condition, we completely classify the $n$-dimensional complete simplicial toric varieties over $k$ admitting the maximal symmetric group actions. We show that while the geometry is extremely rigid for dimensions $n \neq 2$, the $2$-dimensional case exhibits a surprisingly rich structure, yielding an infinite family of non-split toric surfaces.

The paper is organized as follows:

In \cref{sec:preliminaries}, we recall the basic notions of toric varieties, Cox rings, and their automorphism groups. We also review the definitions of split and non-split toric varieties over non-closed fields, the equivariant Minimal Model Program for surfaces, and the Weil restriction of scalars, which will be essential tools for our classification.

In \cref{sec:dimension_4}, we restrict our attention to the base field $\mathbb{C}$. We point out an error in the classification of $4$-dimensional toric varieties with $S_6$-actions given by Esser, Ji, and Moraga \cite[Theorem 4.1]{esser2025symmetries} by providing a counterexample. We then provide the correct and complete classification by proving the following theorem:

\vspace{0.5em}
\noindent \textbf{Theorem A (\cref{thm:main2}).} \textit{Let $X$ be a complete simplicial toric variety of dimension $4$ over $\mathbb{C}$. Then $S_6 \le \Autzero(X)$ if and only if $X$ is isomorphic to one of the following:
\begin{itemize}
    \item the projective space $\mathbb{P}^4$,
    \item the projective bundle $\mathbb{P}(\mathcal{O}(a) \oplus \mathcal{O}) \to \mathbb{P}^3$ for some even integer $a$, or
    \item the weighted projective space $\mathbb{P}(1,1,1,1,m)$ for some positive even integer $m$.
\end{itemize}}
\vspace{0.5em}

In \cref{sec:non_closed}, we present our main results over non-closed fields. In this setting, we obtain results in lower dimensions that differ from those over $\mathbb{C}$. Under the condition \hyperref[cond:star]{($\star$)}, we determine the maximum degree of symmetric groups acting on $n$-dimensional toric varieties over $k$ and explicitly classify the corresponding varieties:

\vspace{0.5em}
\noindent \textbf{Theorem B (\cref{thm:main_nonclosed}).} \textit{Let $k$ be a field satisfying the condition \hyperref[cond:star]{\rm ($\star$)}. Let $m$ be the maximum degree of the symmetric group $S_m$ acting faithfully on an $n$-dimensional complete simplicial toric variety $X$ over $k$. Then we always have $m = n+2$, and the isomorphism classes of $X$ are given as follows:
\begin{itemize}
    \item If $n \neq 2$, then $X \cong \mathbb{P}^n_k$.
    \item If $n = 2$, then $X$ belongs to an infinite family of split and non-split toric surfaces (see \cref{prop:dim2_classification}).
\end{itemize}}
\vspace{0.5em}

As explicitly stated in Theorem B, we prove in \cref{subsec:dim1_and_5} and \cref{subsec:dim3} that the geometry is extremely rigid for dimensions $n \neq 2$, uniquely determining the variety to be the projective space $\mathbb{P}^n_k$. Finally, in \cref{subsec:dim2}, we investigate the exceptional case of dimension $n=2$. By running the $S_4$-equivariant Minimal Model Program and utilizing Galois descent, we successfully construct the aforementioned infinite family of split and non-split toric surfaces admitting faithful $S_4$-actions and completely characterize their geometric structures.

\addtocontents{toc}{\protect\setcounter{tocdepth}{0}}

\subsection*{Acknowledgments}
The author would like to express his sincere gratitude to his advisor, Professor Sho Tanimoto, for his continuous guidance, valuable discussions, and encouragement throughout this work. The author is also deeply grateful to L.~Esser, L.~Ji, and J.~Moraga for helpful correspondences and fruitful discussions concerning the results in \cref{sec:dimension_4}. 
This work was supported by the Make New Standard Program for the Next Generation Researchers of the Tokai National Higher Education and Research System (THERS).

\addtocontents{toc}{\protect\setcounter{tocdepth}{2}}

\section{Preliminaries}
\label{sec:preliminaries}

\subsection{Toric Varieties and Cox Rings over \texorpdfstring{$\C$}{C}}

Let $N \cong \Z^n$ be a lattice of rank $n$, and let $M = \operatorname{Hom}(N, \Z)$ be its dual lattice. Let $T = \operatorname{Spec} \C[M]$ be the algebraic torus over $\C$ associated with $N$. 
A toric variety $X$ over $\C$ is defined by a fan $\Delta$ in $N_\R = N \otimes_\Z \R$. We assume that $X$ is complete and simplicial. 

Let $\Delta(1)$ be the set of $1$-dimensional cones (rays) in $\Delta$. For each ray $\rho \in \Delta(1)$, we denote by $v_\rho \in N$ its primitive generator, and by $D_\rho$ the corresponding torus-invariant prime divisor on $X$. 
Since $X$ is simplicial, the divisor class group $\Cl(X)$ is a finitely generated abelian group. The Cox ring of $X$ is defined as the polynomial ring
$$R(X) = \C[x_\rho \mid \rho \in \Delta(1)],$$
which is naturally graded by $\Cl(X)$ via $\deg(x_\rho) = [D_\rho]$. The toric variety $X$ can be recovered as a geometric quotient of an open subset of $\Spec R(X)$ by the action of the characteristic quasitorus $G = \operatorname{Hom}(\Cl(X), \mathbb{G}_m)$.

\subsection{Automorphism Groups of Toric Varieties over \texorpdfstring{$\C$}{C}}

Let $\Aut(X)$ be the automorphism group of $X$ as an algebraic variety over $\C$, and let $\Aut^0(X)$ be its identity component. 
The structure of $\Aut(X)$ is deeply related to the symmetries of the fan $\Delta$ and the gradings of the Cox ring $R(X)$.

Let $\Aut(N, \Delta)$ be the subgroup of $\operatorname{GL}(N)$ consisting of automorphisms that preserve the fan $\Delta$. The set of rays $\Delta(1)$ is partitioned into linear equivalence classes based on the classes $[D_\rho]$ in $\Cl(X)$. Suppose there are $s$ such classes, and let $d_i$ be the number of rays in the $i$-th class for $1 \le i \le s$. 
Let $S_{\Delta_i} \cong S_{d_i}$ be the symmetric group acting by permuting the rays within the $i$-th linear equivalence class. 

\begin{theorem}[{\cite[Theorem 4.3]{esser2025symmetries}, \cite[\S4]{MR1299003}}] \label{thm:aut_structure}
    Let $X = X(\Delta)$ be a complete simplicial toric variety over $\C$. We denote by $\Aut_g(R(X))$ the group of automorphisms of $R(X)$ preserving the grading. 
    \begin{enumerate}[label={\rm (\arabic*)}]
        \item $\Aut_g(R(X))$ is isomorphic to the semidirect product $U \rtimes G_s$, where $U$ is the unipotent radical and $G_s = \prod_{i=1}^{s}\operatorname{GL}_{d_i}(\C)$. In particular, any finite subgroup of $\Aut_g(R(X))$ is conjugate to a subgroup of $G_s$.
        \item $\Aut^0(X) \cong \Aut_g(R(X))/G$.
        \item $\Aut(N,\Delta) \hookrightarrow S_{\Delta(1)}$ and
            \[
                \Aut(X)/\Aut^0(X) \cong \Aut(N,\Delta)/\prod_{i=1}^s S_{d_i}.
            \]
    \end{enumerate}
\end{theorem}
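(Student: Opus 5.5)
This is Cox's structure theorem, so I would follow Cox's original argument, with the finite-subgroup refinement from Esser--Ji--Moraga.

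\emph{Step 1: the structure of $\Aut_g(R(X))$.} A graded automorphism $\phi$ sends each variable $x_\rho$ to a homogeneous element of degree $[D_\rho]$. Because $X$ is complete, each graded piece $R(X)_\alpha$ is finite dimensional. I would order the classes $\alpha_1,\dots,\alpha_s$ of the rays by the partial order ``$\alpha \le \beta$ if $\beta-\alpha$ is effective''. This is a genuine partial order, since no nonzero effective class has an effective negative on a complete variety. Then
\[
R(X)_{\alpha_i} = V_i \oplus W_i,
\]
where $V_i$ is spanned by the $d_i$ variables of class $\alpha_i$, and $W_i$ consists of polynomials in the variables of strictly smaller classes. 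Hence $\phi$ is determined by a block-triangular datum, namely a linear map on each $V_i$ together with a correction term in $W_i$. Invertibility forces each diagonal block to lie in $\operatorname{GL}_{d_i}(\C)$.

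Let $U$ be the subgroup where every diagonal block is the identity. It is a normal unipotent subgroup, being the kernel of the projection onto the blocks. The inclusion $G_s \hookrightarrow \Aut_g(R(X))$, acting linearly on the $V_i$, splits this projection, so $\Aut_g(R(X)) = U \rtimes G_s$. Since $G_s$ is reductive, $U$ is the unipotent radical. For the finite-group statement, take a finite $H \le U \rtimes G_s$. In characteristic zero, $H^1(H,U)=0$: $U$ is unipotent and has a filtration with vector-group quotients on which $H$ acts, and these have trivial cohomology since $|H|$ is invertible. So $H$ is $U$-conjugate into the Levi factor $G_s$ (Mostow).

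\emph{Step 2: $\Aut^0(X)\cong \Aut_g(R(X))/G$.} The group $G$ sits centrally in $\Aut_g(R(X))$ as the diagonal scalings by characters of $\Cl(X)$. Graded automorphisms normalize $G$. Each one preserves the irrelevant ideal $B(\Delta)$, because every such automorphism lies in the connected group $U\rtimes G_s$, whose Lie algebra is spanned by the torus and the root derivations $x_\rho\,\partial/\partial x_{\rho'}$-type Demazure roots, and these preserve $B(\Delta)$. So each descends to the quotient $(\Spec R(X)\setminus V(B))/G = X$. This gives a homomorphism $\Aut_g(R(X))/G \to \Aut(X)$, which is injective because $G$ is exactly the stabilizer acting trivially.

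For surjectivity onto $\Aut^0(X)$, I would use Demazure's description: $\Aut^0(X)$ is generated by $T$ and the root subgroups $\mathbb{G}_a$, and each of these visibly lifts to graded automorphisms. Alternatively, an element of $\Aut^0(X)$ acts trivially on $\Cl(X)$, since $\Cl$ is discrete and the action is connected. It therefore pulls back each $\mathcal{O}(D_\rho)$ to a sheaf in the same class and induces a graded automorphism of $\bigoplus_\alpha H^0(X,\mathcal{O}(\alpha)) = R(X)$, unique up to $G$.

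\emph{Step 3: the component group.} An element $g\in\Aut(X)$ normalizes $\Aut^0(X)$ and hence conjugates the maximal torus $T$. Composing with an element of $\Aut^0(X)$, I may assume $g$ normalizes $T$. Then $g$ is a toric automorphism up to translation, which gives a lattice automorphism preserving $\Delta$. This yields a surjection $\Aut(N,\Delta)\to \Aut(X)/\Aut^0(X)$.

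The map $\Aut(N,\Delta)\to S_{\Delta(1)}$ is injective because the rays span $N_\R$ by completeness. The kernel of the surjection consists of the fan automorphisms realized inside $\Aut^0(X)$. Such an automorphism normalizes $T$ within $\Aut^0(X)$, so it lies in the Weyl group of the reductive part $G_s/G$, which is $\prod_i S_{d_i}$ acting by permuting rays within each class. Conversely, each permutation of rays inside one class is a lattice automorphism of $\Delta$: permutation matrices inside $\operatorname{GL}_{d_i}$ preserve $B(\Delta)$ and hence the fan.

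I expect the main obstacles to be exactly two points: showing that graded automorphisms preserve the irrelevant ideal, and the Weyl-group identification of the kernel in Step 3. For the first, I would use connectedness of $U\rtimes G_s$ and check the generators. For the second, I would use the standard fact that two maximal tori of $\Aut^0(X)$ are conjugate.
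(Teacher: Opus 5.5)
The paper does not prove this statement. It quotes it from Cox and from Esser--Ji--Moraga, so the relevant benchmark is Cox's argument, and your outline reconstructs it faithfully: the degree ordering gives the block-triangular form, graded automorphisms descend through the irrelevant ideal, and the component group reduces to toric automorphisms. Parts (1) and (3) are sound.

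\emph{The $B$-preservation check.} This is short, and it is where simpliciality enters. Take a Demazure root $m$ with distinguished ray $\rho$, and set $\delta = x^{\hat m}\,\partial/\partial x_\rho$ with $x^{\hat m}=\prod_{\rho'\neq\rho}x_{\rho'}^{\langle m,v_{\rho'}\rangle}$. For a cone $\sigma$ with $\rho\notin\sigma(1)$, let $\sigma'\preceq\sigma$ be the face on which $m$ vanishes. Move a relative interior point of $\sigma'$ slightly in the direction $v_\rho$ and pair with $m$. This shows that $\rho$ is a ray of some cone $\tau\succeq\sigma'$. Since $\tau$ is simplicial, $\sigma'(1)\cup\{\rho\}$ is then the ray set of a cone, which is exactly what gives $\delta(x^{\hat\sigma})\in B$.

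\emph{The kernel in (3).} Conjugacy of tori is not the tool you need here. A lift to $U\rtimes G_s$ of a fan automorphism lying in $\Aut^0(X)$ normalizes the diagonal torus $\widetilde T=(\C^*)^{\Delta(1)}$. Its $U$-component therefore centralizes $\widetilde T$. That component is trivial, because it could only add to $x_\rho$ terms of degree at least two with the same $\widetilde T$-weight as $x_\rho$, and there are none. So the lift is a monomial matrix in $G_s$. Comparing the fixed point $[1:\cdots:1]$ then removes its torus part.

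\emph{The gap: surjectivity in (2).} This is the one step that does not stand as written.
\begin{itemize}
\item Demazure's theorem that $\Aut^0(X)$ is generated by $T$ and root subgroups is proved for \emph{smooth} complete toric varieties. For simplicial ones it is a corollary of the very result you are proving, so citing it is circular.
\item Your alternative argument is the right route, but it needs two justifications.
\item First, $\Cl(X)$ is not the group of points of a Picard scheme, so ``$\Cl$ is discrete'' must be argued. $\Aut^0(X)$ acts trivially on $\Pic(X)$ because $H^1(X,\mathcal{O}_X)=0$. Since $X$ is simplicial, $\Cl(X)/\Pic(X)$ is finite, so $\Aut^0(X)$ acts on $\Cl(X)$ through a finite group. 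A connected algebraic group over $\C$ has no proper subgroup of finite index (its power maps are dominant), so this action is trivial.
\item Second, the isomorphisms $g^*\mathcal{O}(D_\alpha)\cong\mathcal{O}(D_\alpha)$ must be chosen compatibly with multiplication, and this is not automatic when $\Cl(X)$ has torsion. For each $\alpha$ the set of choices is a $\C^*$-torsor, since $H^0(X,\mathcal{O}_X^*)=\C^*$. Multiplication makes the union of these torsors an abelian extension of $\Cl(X)$ by $\C^*$. A graded lift is precisely a homomorphic section of this extension. Such a section exists because $\C^*$ is divisible. The sections form a torsor under $\operatorname{Hom}(\Cl(X),\C^*)=G$, which is what your ``unique up to $G$'' requires.
\end{itemize}
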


\subsection{Toric varieties over an arbitrary field}

\begin{definition}
Let $k$ be a field. An algebraic group $T$ over $k$ is called an \textit{algebraic torus} if $T_{\overline{k}} \cong \mathbb{G}_{m, \overline{k}}^n$ after base change to an algebraic closure $\overline{k}$, where $n$ is the dimension of the torus. 
A \textit{toric variety} $X$ over $k$ is a normal variety over $k$ containing an algebraic torus $T$ as an open dense subset, such that the action of $T$ on itself extends to an algebraic action $T \times X \to X$.
\end{definition}

\begin{remark}
When the base field is algebraically closed (i.e., $k = \overline{k}$), we always have $T \cong \mathbb{G}_m^n$. As is well known, a toric variety $X$ over an algebraically closed field is completely determined by a fan $\Delta$ in the cocharacter lattice $N = \operatorname{Hom}(\mathbb{G}_m, T) \cong \mathbb{Z}^n$.
\end{remark}

\begin{definition}
If $T \cong \mathbb{G}_{m,k}^n$ over $k$, $T$ is called a \textit{split torus}. A toric variety $X$ equipped with an action of a split torus is called a \textit{split toric variety} if every irreducible component of the torus-invariant divisor on $X$ is geometrically integral over $k$. 
For a general algebraic torus $T$, there exists a finite Galois extension $L/k$ such that $T_L := T \times_k \operatorname{Spec} L$ is a split torus over $L$. In this case, the Galois group $\operatorname{Gal}(L/k)$ acts on the cocharacter lattice $N$ (see, for example, \cite[Chapter 3, \S8.2]{Vos98}). Such a field $L$ is called a \textit{splitting field} for $T$.
\end{definition}

\begin{example}
Consider the circle group $S^1 = \operatorname{Spec} \mathbb{R}[x,y]/(x^2+y^2-1)$ over the field of real numbers $\mathbb{R}$. Upon base change to the splitting field $\mathbb{C}$, letting $u = x+iy$ and $v = x-iy$ yields $uv=1$, which implies $S^1_{\mathbb{C}} \cong \mathbb{G}_{m, \mathbb{C}}$. Thus, $S^1$ is a torus over $\mathbb{R}$. 
However, considering the topological properties of the $\mathbb{R}$-rational points, $S^1(\mathbb{R})$ is compact, whereas $\mathbb{G}_m(\mathbb{R}) \cong \mathbb{R}^\times$ is not compact. Therefore, $S^1 \not\cong \mathbb{G}_{m,\mathbb{R}}$, which means $S^1$ is a non-split torus over $\mathbb{R}$. In this case, the action of $\operatorname{Gal}(\mathbb{C}/\mathbb{R}) \cong \mathbb{Z}/2\mathbb{Z}$ on the lattice $N \cong \mathbb{Z}$ is given by the inversion $v \mapsto -v$.
\end{example}

From the above facts, a toric variety $X$ over $k$ is completely characterized by the following pair of data:
\begin{enumerate}
    \item A split toric variety $X_L$ over a splitting field $L$ (which is equivalent to a fan $\Delta$ in the lattice $N$).
    \item A faithful action of the Galois group $\operatorname{Gal}(L/k)$ on the lattice $N$ that leaves the fan $\Delta$ invariant.
\end{enumerate}

\subsection{Equivariant Minimal Model Program for Toric Surfaces}
\label{subsec:equivariant_mmp}

In this subsection, we focus on complete simplicial split toric surfaces (i.e., dimension $n=2$). Let $G$ be a finite group acting on a split toric surface $X$ via algebraic automorphisms. 
We study the birational geometry of $X$ through the $G$-equivariant Minimal Model Program ($G$-MMP). The general framework of MMP can be found in \cite{kollar1998birational}, and the equivariant surface case is thoroughly detailed in \cite{dolgachev2009finite}.

Recall from \cite[Theorem 4.1.3]{cox2011toric} the standard short exact sequence of real vector spaces associated to $X$:
$$ 0 \to M_\R \to \R^{\Delta(1)} \to \Pic(X)_\R \to 0, $$
where $\R^{\Delta(1)}$ is generated by the torus-invariant prime divisors. Since $G$ acts on $X$, it naturally acts on this sequence. Taking the $G$-invariant part, which is an exact functor because we are working over $\R$ and $G$ is finite, we obtain the exact sequence:
$$ 0 \to (M_\R)^G \to (\R^{\Delta(1)})^G \to \Pic(X)_\R^G \to 0. $$
If the $G$-action on the lattice $N$ fixes no non-trivial vectors, we have $N^G = \{0\}$, which implies its dual $(M_\R)^G = \{0\}$. Consequently, the $G$-invariant Picard number $\rho^G(X) := \dim_\R \Pic(X)_\R^G$ is exactly equal to $\dim_\R (\R^{\Delta(1)})^G$. Since $\R^{\Delta(1)}$ is a permutation representation of $G$ on the set of rays $\Delta(1)$, the dimension of its invariant subspace exactly equals the number of $G$-orbits in $\Delta(1)$.

During the $G$-MMP, any $G$-equivariant extremal contraction $\phi \colon X \to Y$ preserves the toric structure. This is because the algebraic torus $T$ and the finite group $G$ generate a group isomorphic to $T \rtimes G$ acting on $X$, and the contraction of a $T$-invariant (and $G$-invariant) extremal face naturally induces a toric structure on the target variety $Y$ (see e.g., \cite[Chapter 15]{cox2011toric} or \cite[Section 14-1]{matsuki2002introduction}).

In the specific case of toric surfaces, a step of the $G$-MMP consists of the simultaneous $G$-equivariant contraction of a $G$-orbit of $K_X$-negative extremal curves. Since $X$ is a toric surface, the contraction of a curve is a divisorial contraction. According to the general theory of the Toric MMP (see \cite[Section 15.4]{cox2011toric} and \cite[Theorem 14-1-5]{matsuki2002introduction}), a toric divisorial contraction of a torus-invariant prime divisor $D_\rho$ corresponds precisely to removing its associated ray $\rho$ from the fan $\Delta$ and replacing the adjacent maximal cones with their union. 

Therefore, a divisorial step of the $G$-MMP on $X$ translates purely combinatorially into removing a $G$-orbit of rays from $\Delta(1)$. Running the $G$-MMP terminates with a $G$-Mori fiber space, where $\rho^G$ is $1$ or $2$. In \cref{sec:non_closed}, this combinatorial constraint will strictly bound the possible fan structures of $X$.

\subsection{Weil Restriction of Scalars}
\label{subsec:weil_res}
In this subsection, we recall the notion of Weil restriction (or restriction of scalars), which will play a crucial role in constructing a non-split $k$-form with an $S_5$-action in \cref{sec:non_closed}. For a comprehensive and scheme-theoretic treatment of this topic, we refer the reader to \cite[\S7.6]{BLR90}.

\begin{definition}
  Let $L/k$ be a finite field extension. The Weil restriction functor $\operatorname{Res}_{L/k}$ from the category of $L$-schemes to the category of $k$-schemes is defined as the right adjoint to the base change functor $-\times_{\operatorname{Spec} k} \operatorname{Spec} L$. 
\end{definition}

\begin{example}
To observe the explicit geometric structure, let us consider the specific fields $k = \mathbb{Q}(\sqrt{-7})$ and its quadratic extension $L = k(\sqrt{5})$. We apply the Weil restriction to the projective line $\mathbb{P}^1_L$ to obtain a $k$-variety $X = \operatorname{Res}_{L/k}(\mathbb{P}^1_L)$. Since $[L:k] = 2$, $X$ is a surface over $k$.

Let $[Z_0 : Z_1]$ be the homogeneous coordinates of $\mathbb{P}^1_L$. Let $\sigma \in \operatorname{Gal}(L/k)$ be the non-trivial Galois automorphism, which sends $\sqrt{5}$ to $-\sqrt{5}$. We can construct $k$-rational coordinates $U_0, U_1, U_2, U_3 \in k$ by defining:
\[
U_0 = Z_0 \sigma(Z_0), \quad U_1 = Z_1 \sigma(Z_1), \quad \text{and} \quad Z_0 \sigma(Z_1) = U_2 + \sqrt{5} U_3.
\]
Using the norm relation $(Z_0 \sigma(Z_0))(Z_1 \sigma(Z_1)) = (Z_0 \sigma(Z_1))(\sigma(Z_0) Z_1)$, we can express $X$ as a smooth quadric surface in $\mathbb{P}^3_k$ defined by the equation:
\[
U_0 U_1 = U_2^2 - 5 U_3^2.
\]
\noindent \underline{\textit{Galois Action and the Automorphism Group.}}
Over the algebraic closure (or already over $L$), the base change $X_L$ splits into a product of the original variety and its Galois conjugate, yielding an isomorphism $X_L \cong \mathbb{P}^1_L \times \mathbb{P}^1_L$. Under this identification, the Galois group $\operatorname{Gal}(L/k)$ acts on $X_L$ not only by acting on the coordinates but also by swapping the two geometric factors (i.e., exchanging the two rulings of the quadric).

By the theory of Galois descent, the $k$-automorphism group $\operatorname{Aut}_k(X)$ consists of the $\operatorname{Gal}(L/k)$-invariant elements of $\operatorname{Aut}_L(X_L)$. The geometric swap action on the factors is Galois-invariant, and it descends to a $k$-automorphism. Consequently, we obtain the following group structure:
\[
\operatorname{Aut}_k(X) \cong \operatorname{PGL}_2(L) \rtimes \operatorname{Gal}(L/k).
\]
In this semi-direct product, the non-trivial element of $\operatorname{Gal}(L/k)$ acts on $\operatorname{PGL}_2(L)$ via Galois conjugation. By choosing the fields appropriately, the embedding of $A_5$ into $\operatorname{PGL}_2(L)$ combined with this Galois conjugation constructs a faithful $S_5$-action on $X$. Indeed, our specific choice of $k$ and $L$ in this example precisely realizes this phenomenon. However, in \cref{sec:non_closed}, we will impose an arithmetic condition on the base field that strictly excludes such exceptional fields. Consequently, under our main assumption, we do not need to consider $S_5$-actions, which completely rules out this non-split case.
\end{example}

\section{Four dimensional toric varieties with \texorpdfstring{$S_6$}{S6}-action over \texorpdfstring{$\C$}{C}}
\label{sec:dimension_4}

Throughout this section, we assume that the base field is the field of complex numbers $\C$.
The statement of \cite[Theorem 4.1]{esser2025symmetries} is incorrect in the case of dimension $n = 4$. It states that if $S_6$ acts faithfully on a complete simplicial toric variety $X$ of dimension $4$, then $X \cong \PP^4$ or $\PP^2 \times \PP^2$. In this section, we construct counterexamples and provide a complete classification.

In the proof of \cite[Theorem 4.1]{esser2025symmetries}, there are three cases: $S_k \cap \Autzero(X) = S_k$, $A_k$, or trivial. In the case $S_k \cap \Autzero(X) = S_k$, their argument contains a gap. The key lemma in this case is the following:

\begin{lemma}[{\cite[Lemma 4.4]{esser2025symmetries}}] \label{lem:EJM_dimension_bound}
    Let $X$ be a complete simplicial toric variety of dimension $n$. If $S_k \le \Autzero(X)$, then
    \[
        n \ge \begin{cases}
        3 & \text{if } k = 5,6, \\
        k-2 & \text{if } k \ge 7.
        \end{cases}
    \]
    Furthermore, if equality holds, then $X \cong \PP^n$. \qed
\end{lemma}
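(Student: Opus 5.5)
The plan is to reduce the existence of $S_k \le \Autzero(X)$ to a faithful projective representation of $S_k$ on one of the blocks $\C^{d_i}$ of the Levi factor $G_s$. I then bound $n$ from below by $d_i-1$ and use representation theory of $S_k$ to bound $d_i$.

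\textbf{Lifting.} By \cref{thm:aut_structure}(2), $\Autzero(X)\cong \Aut_g(R(X))/G$. Let $\widetilde H$ be the preimage of $S_k$ in $\Aut_g(R(X))$; it is an extension of $S_k$ by the quasitorus $G$. Since $G$ is diagonalizable, I will choose a finite subgroup $H\le \widetilde H$ surjecting onto $S_k$, for instance by intersecting with the normalizer of a finite torsion subgroup of $G$ large enough to absorb the cocycle. By \cref{thm:aut_structure}(1), $H$ is conjugate into $G_s=\prod_i \operatorname{GL}_{d_i}(\C)$.

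\textbf{A block carrying $S_k$.} The group $G$ acts on each block $\C^{d_i}$ by one character, hence by scalars, since all $x_\rho$ in a block have the same degree. Therefore $H$ induces homomorphisms $S_k \to \operatorname{PGL}_{d_i}(\C)$. If all of these were trivial, $H$ would lie in $G\cdot(\text{scalars on blocks})$, which is abelian modulo $G$. That contradicts $k\ge 5$, because $S_k$ is nonabelian and its only proper normal subgroups are $1$ and $A_k$. So some block carries a projective representation of $S_k$ whose kernel is $1$ or $A_k$; in the latter case the image is $\Z/2$, which is abelian, so it also contributes nothing. Hence some block carries a faithful projective representation of $S_k$, and in particular of $A_k$.

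The minimal degree of a nontrivial projective representation of $A_k$ is known: it is $k-1$ for $k\ge 9$, and the spin representations have dimension $2^{\lfloor (k-2)/2\rfloor}\ge k-1$ once $k\ge 7$, with a direct check for $k=7,8$. For $k=5,6$ the minimal degrees are $2$ and $3$ coming from $2.A_5$ and $3.A_6$, but these do not extend to faithful projective representations of $S_5$ or $S_6$ in such small dimension. Checking this gives $d_i\ge 4$ for $k=5,6$ and $d_i\ge k-1$ for $k\ge7$.

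\textbf{From $d_i$ to $n$.} Let $\rho_1,\dots,\rho_{d}$ be the rays of the block, all with the same class in $\Cl(X)$. Then there are $m_j\in M$ with $\operatorname{div}(\chi^{m_j})=D_{\rho_j}-D_{\rho_1}$ for $j=2,\dots,d$. These $m_j$ are linearly independent, because the divisors $D_{\rho_j}-D_{\rho_1}$ are independent in $\Z^{\Delta(1)}$ and $M\hookrightarrow \Z^{\Delta(1)}$. Hence $n=\operatorname{rank} M\ge d-1$, which yields the stated bounds.

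\textbf{Equality case.} If $n=d-1$, then the block has $n+1$ rays all in one class, and $\langle v_{\rho_j},m_l\rangle$ determines them. Writing out the relations $\operatorname{div}(\chi^{m_l})$ shows that any ray outside the block pairs to $0$ with every $m_l$; since the $m_l$ span $M_\R$, no such ray exists. So $\Delta(1)$ consists of exactly $n+1$ rays with a single linear relation $\sum v_{\rho_j}=0$, all of equal weight. Thus $X$ is a fake weighted projective space $\PP^n/\mu$.

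The main obstacle I expect is ruling out the finite quotient $\mu$, i.e.\ showing that $\Cl(X)$ is torsion-free. For this I would use that $\Cl(X)$ is torsion-free exactly when the $m_l$ generate $M$, not just a finite-index sublattice. I would argue this by computing $\operatorname{div}(\chi^m)$ for an arbitrary $m\in M$: it lies in $\Z^{\Delta(1)}$ with coefficient sum $0$ because the $v_{\rho_j}$ sum to zero, so it is an integer combination of the $D_{\rho_j}-D_{\rho_1}$. Once $\Cl(X)\cong\Z$ with all $D_{\rho_j}$ of degree $1$, this gives $X\cong\PP^n$.
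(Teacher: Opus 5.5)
The paper does not prove this lemma itself; it quotes it from Esser--Ji--Moraga. Your skeleton is the natural one and agrees with how the paper uses that proof (cf.\ \cref{lem:d1_geq_4} and the $u_j$-argument in the proof of \cref{thm:main2}): a finite lift into $G_s$, a block $\C^{d_i}$ carrying a faithful projective representation of $S_k$, the bound $n\ge d_i-1$ via the $m_j$, and the equality analysis.

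\textbf{The gap at $k=7$.} The genuine problem is the representation-theoretic input for $k=7$. You pass from $S_k$ to $A_k$ and claim that the spin degree $2^{\lfloor (k-2)/2\rfloor}$ of $2.A_k$ is at least $k-1$ once $k\ge 7$. Your own formula gives $2^2=4<6$ at $k=7$. Indeed $2.A_7$ has two faithful $4$-dimensional representations, so $A_7\le\operatorname{PGL}_4(\C)$, and the ``direct check'' you postpone fails. Restricting to $A_7$ therefore only yields $d_i\ge 4$, i.e.\ $n\ge 3$, not the required $n\ge 5$. The equality statement for $k=7$ is not reached either.

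\textbf{The fix.} Argue with $S_k$ itself, as you already do for $k=5,6$. The Schur multiplier of $S_k$ ($k\ge 4$) is $\Z/2$. Hence an embedding $S_k\hookrightarrow\operatorname{PGL}_d(\C)$ lifts to a linear representation either of $S_k$ or of a double cover $\widetilde{S}_k$ on which the central involution acts by $-1$.
\begin{itemize}
\item In the first case, faithfulness forces an irreducible constituent other than the trivial and sign characters. For $k\ge 5$ such a constituent has degree at least $k-1$.
\item In the second case, every constituent is a spin representation. Its degree is at least the basic spin degree $2^{\lfloor (k-1)/2\rfloor}$ (Wales). This equals $4$ for $k=5,6$ and is $\ge k-1$ for $k\ge 7$.
\item For $k=7$ the basic spin degree is $8$. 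The two $4$-dimensional representations of $2.A_7$ differ on the two $A_7$-classes of $7$-cycles, which fuse in $S_7$, so they do not extend.
\end{itemize}
This gives $d_i\ge 4$ for $k=5,6$ and $d_i\ge k-1$ for $k\ge 7$ uniformly, and the rest of your argument then goes through.

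\textbf{Two smaller points.}
\begin{itemize}
\item Your justification for the finite lift is vacuous. $G$ is central in $\Aut_g(R(X))$, so the normalizer of any subgroup of $G$ is everything. The correct reason: the identity component of the preimage of $S_k$ is the torus $G^0$, which is divisible, while the extension class in $H^2$ of the finite quotient is killed by its order $m$. So the extension is induced from one by the finite group $G^0[m]$, which gives a finite subgroup surjecting onto $S_k$.
\item The torsion issue you single out as the main obstacle is automatic. Set $M'=\langle m_2,\dots,m_{n+1}\rangle$. Then the finite group $M/M'$ embeds into $\Z^{n+1}/M'\cong\Z$, so $M=M'$. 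This is exactly what your coefficient-sum computation shows.
\end{itemize}
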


In the proof of \cite[Theorem 4.1]{esser2025symmetries}, it is concluded that if $S_k$ acts on $X$, where $k$ is the maximum degree of symmetric groups acting faithfully on an $n$-dimensional complete simplicial toric variety, then $X \cong \PP^n$ since equality in the above lemma holds. However, for $n = 4$, equality never holds.

Indeed, there is a trivial counterexample $\PP^3 \times \PP^1$. Since $S_6$ acts faithfully on $\PP^3$, $S_6$ also acts faithfully on $\PP^3 \times \PP^1$. The main result of this section is as follows:

\begin{theorem} \label{thm:main2}
    Let $X$ be a complete simplicial toric variety of dimension $4$. Then $S_6 \le \Autzero(X)$ if and only if $X$ is isomorphic to one of the following:
    \begin{itemize}
        \item $\PP^4$,
        \item $\PP(\mathcal{O}(a) \oplus \mathcal{O}) \to \PP^3$ for some even integer $a \in 2\Z$,
        \item $\PP(1,1,1,1,m)$ for some positive even integer $m \in 2\Z_{>0}$.
    \end{itemize}
\end{theorem}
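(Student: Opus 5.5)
Throughout I use \cref{thm:aut_structure}. A finite subgroup $S_6 \le \Autzero(X) \cong \Aut_g(R(X))/G$ lifts, after conjugation, to a subgroup of $G_s/G$ with $G_s=\prod_i \operatorname{GL}_{d_i}(\C)$. So $S_6$ embeds into $\prod_i \operatorname{PGL}_{d_i}(\C)$ modulo the quasitorus, and it acts projectively on the blocks $\C^{d_i}$ of variables sharing a degree. A complete simplicial toric variety of dimension $4$ has $|\Delta(1)| = \sum_i d_i \ge 5$. The rays of one class span a cone and cannot all lie in a proper face; more precisely, $\sum_{\rho \text{ in class } i} v_\rho$ lies in a fixed direction. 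Since $S_6$ is simple modulo $A_6$ and $A_6$ has no nontrivial projective representation of degree $\le 2$, the image of $S_6$ must be faithful on a block with $d_i\ge 4$. Indeed, the minimal projective degree of $A_6$ is $3$, but a degree-$3$ projective representation of $A_6$ does not extend to $S_6$ (the outer part swaps the two $3$-dimensional projective reps). So $S_6$ acting faithfully forces some $d_i \ge 4$. On the other hand, $d_i \le n+1 = 5$, because rays in one linear equivalence class satisfy $D_\rho - D_{\rho'} = \operatorname{div}(\chi^m)$, which places strong restrictions on them.

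\textbf{Case $d_i = 5$.} All five rays are linearly equivalent. Then $\Delta(1)$ has exactly $5$ rays, so $X$ is a fake weighted projective space with all weights equal. With linear equivalence (not just $\Q$-linear equivalence), this forces $X \cong \PP^4$. I would verify this with the Cox ring grading: $\Cl(X)$ is generated by one class and has no torsion beyond that forced by the degrees.

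\textbf{Case $d_i = 4$ and $|\Delta(1)|=5$.} The four rays $v_1,\dots,v_4$ are equivalent, and a fifth ray $v_5$ satisfies $v_1+\dots+v_4 + m v_5 = 0$ after normalizing. Simpliciality and equality of the classes $[D_1]=\dots=[D_4]$ in $\Cl(X)$, including torsion, force $X=\PP(1,1,1,1,m)$.

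\textbf{Case $d_i = 4$ and $|\Delta(1)|=6$.} With two extra rays, the Gale dual and the fact that $v_1,\dots,v_4$ are equivalent give a splitting. The rays $v_5,v_6$ lie on a line ($v_5=-v_6$), and $v_1,\dots,v_4$ project to the fan of $\PP^3$. Hence $X \cong \PP(\mathcal O(a)\oplus\mathcal O)$ over $\PP^3$; the alternative, a weighted projective bundle, is excluded by simpliciality and completeness. More than six rays is impossible: then $\rho(X)\ge 3$ and the remaining $\ge 3$ rays, in blocks with $d_j \le 2$ (or $d_j=3$), must still form a complete fan together with a $4$-block in $\mathbb{Z}^4$. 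I expect this to be excluded by a dimension count showing that the four equivalent rays already span a complete fan modulo a line.

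\textbf{Parity.} The parity restrictions ($a$, $m$ even) come from the lifting problem. $S_6$ must embed in $\operatorname{GL}_4(\C)\times \GG_m^{r}$ modulo $G$. The $4$-dimensional block carries the representation of $2.A_6$ or $S_6$ via $\operatorname{SL}_4 \supset 2.A_7$, and one must check whether the central $-1$ (the scalar $-I_4$ on $x_1,\dots,x_4$) is absorbed by $G$. For $\PP(1,1,1,1,m)$, $G=\GG_m$ acts by $(t,t,t,t,t^m)$, so $-I_4$ lies in $G$ (with $t=-1$) exactly when $(-1)^m=1$. Hence we need $m$ even, since otherwise the $S_6$ image in $\operatorname{PGL}_4$ does not lift. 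Similarly, for the bundle, $G=\GG_m^2$ acts on $x_1,\ldots,x_4$ by $t$ and on $x_5,x_6$ by weights involving $t^{a}$, which gives $a$ even.

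\textbf{Converse.} For the converse I would exhibit the action explicitly. $S_6\subset \operatorname{PGL}_4$ lifts to the double cover $2.S_6\subset \operatorname{GL}_4(\C)$, and since $-1\in G$ in these cases the action descends.

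The main obstacle is the careful lifting/parity analysis. This requires knowing exactly that the projective representation of $S_6$ of minimal degree $4$ (via $A_6\cong \operatorname{PSL}_2(9)$ and the exceptional $2.A_7\subset \operatorname{SL}_4$) does not lift linearly, and matching this with the quasitorus $G$.
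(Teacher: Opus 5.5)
Your reduction to a block with $d_1\in\{4,5\}$ and your parity analysis are essentially sound. The gap is the classification step for $d_1=4$, which you assert rather than prove. You state without argument that $|\Delta(1)|\in\{5,6\}$, that $v_6=-v_5$, that $v_1+\dots+v_4+mv_5=0$ ``after normalizing'', and that the rays generate $N$. For seven or more rays you only say you ``expect'' a dimension count to exclude them. Note that \cref{lem:d_i_bound}(1) does not bound the number of singleton blocks, so a real argument is needed here.

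The missing idea is to use the equality of classes concretely:
\begin{itemize}
\item Since $[D_j]=[D_4]$ for $j=1,2,3$, exactness of $0\to M\to\bigoplus_\rho \Z D_\rho\to \Cl(X)\to 0$ gives $u_j\in M$ with $\langle u_j,v_i\rangle=\delta_{ij}-\delta_{i4}$.
\item These three characters are linearly independent (evaluate on $v_1,v_2,v_3$) and vanish on every ray outside the block. So all remaining rays lie on one line: there are at most two of them, and if there are two, $v_6=-v_5$.
\item With five rays, completeness and strong convexity put $-v_5$ in $\operatorname{cone}(v_1,\dots,v_4)$. Pairing with $u_j$ forces equal coefficients, so $v_1+\dots+v_4=-mv_5$.
\item Integrality of $\langle u_j,v\rangle$ for $v\in N$, together with primitivity of $v_5$, shows that $v_1,\dots,v_5$ generate $N$.
\item With six rays in blocks $(4,1,1)$, the same pairing gives $v_1+\dots+v_4=av_5$ and lattice generation. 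The paper also checks that $v_1,\dots,v_4$ are linearly independent, since otherwise $v_5^*$ yields $D_5\sim D_6$.
\end{itemize}
Lattice generation, not ``simpliciality and completeness'' as you claim, is what rules out finite-quotient (fake weighted) variants. In the case $d_1=5$, ``exactly five rays'' likewise needs \cref{lem:d_i_bound}(2) or the analogous argument with four characters $u_j$.

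Once this is supplied, your parity argument works. For the bundle it is a legitimate alternative to the paper's route. You read off the quasitorus action $(s,s,s,s,s^{-a}t,t)$ and ask whether the central involution of $\widetilde S_6$ is absorbed into $G$. The paper instead reduces to $\PP(1,1,1,1,a)$ via Blanchard's lemma, viewing the bundle as the blow-up at the singular point; your version is more direct. Two small repairs are needed. First, say explicitly that the central element of $\widetilde S_6$ lies in its commutator subgroup, so the one-dimensional characters on singleton blocks are trivial on it and it maps to $(-I_4,1)$, resp.\ $(-I_4,1,1)$. Second, drop the reference to $2.A_7$: $S_6$ is not a subgroup of $A_7$, and the relevant objects are the $4$-dimensional spin representations of $\widetilde S_6$.
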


The idea and the tools used in the proof of \cref{thm:main2} are similar to those in \cite{esser2025symmetries}. In the following discussion, we will adhere to the notation introduced in \cref{sec:preliminaries}.

First, we state several lemmas that will be used to prove \cref{thm:main2}.

\begin{lemma}[{\cite[Lemma 4.2]{esser2025symmetries}}] \label{lem:d_i_bound}
    With the notation as in \cref{sec:preliminaries},
    \begin{enumerate}[label={\rm (\arabic*)}]
        \item $\displaystyle \sum_{i=1}^s(d_i -1) \le 4$.
        \item If $\displaystyle \sum_{i=1}^s (d_i-1) = 4$, then $X \cong \PP^{d_1-1} \times \dots \times \PP^{d_s-1}$. \qed
    \end{enumerate}
\end{lemma}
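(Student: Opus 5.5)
The plan is to bound $\sum_i (d_i-1)=|\Delta(1)|-s$ by $\operatorname{rank} M$. Linear equivalences between torus-invariant divisors are realized by characters, so each equivalence class contributes independent characters.

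\textbf{Step 1 (producing characters).} Fix a class $i$ and a base ray $\rho_{i,0}$ in it. For every other ray $\rho$ in the same class we have $D_\rho\sim D_{\rho_{i,0}}$. By the exact sequence $M\to \Z^{\Delta(1)}\to \Cl(X)\to 0$ there is $m_\rho\in M$ with $\operatorname{div}(\chi^{m_\rho})=D_\rho-D_{\rho_{i,0}}$. Equivalently,
\[
\langle m_\rho,v_\tau\rangle=\delta_{\tau\rho}-\delta_{\tau\rho_{i,0}}\quad\text{for all }\tau\in\Delta(1).
\]
This gives $\sum_i(d_i-1)$ elements of $M$. Their images $e_\rho-e_{\rho_{i,0}}$ in $\Z^{\Delta(1)}$ are linearly independent. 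Since $X$ is complete, the $v_\tau$ span $N_\R$, so $M\to\Z^{\Delta(1)}$ is injective. Hence the $m_\rho$ are linearly independent in $M\cong\Z^4$, which proves (1).

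\textbf{Step 2 (equality gives the lattice data).} Assume $\sum_i(d_i-1)=4$, so the $m_\rho$ form a $\Q$-basis of $M_\Q$. I claim they form a $\Z$-basis of $M$. Take any $m\in M$. Its image in $\Z^{\Delta(1)}$ lies in the $\Q$-span of the vectors $e_\rho-e_{\rho_{i,0}}$. The coefficients in that span are read off as coordinates of an integer vector, so they are integers. Injectivity of $M\to\Z^{\Delta(1)}$ then makes $m$ an integral combination of the $m_\rho$.

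Let $\{e_\rho\}$ be the dual $\Z$-basis of $N$. The pairing formula gives $v_\rho=e_\rho$ for each non-base ray and $v_{\rho_{i,0}}=-\sum_{\rho\text{ in class }i,\ \rho\neq\rho_{i,0}}e_\rho$. A class with $d_i=1$ consists of a base ray $\tau$ with $\langle m,v_\tau\rangle=0$ for all basis vectors $m$, which forces $v_\tau=0$; that is impossible. So every $d_i\ge2$, and $N=\bigoplus_i N_i$ with $N_i$ of rank $d_i-1$. The rays of class $i$ are exactly the rays of the standard fan of $\PP^{d_i-1}$ in $N_i$.

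\textbf{Step 3 (the fan is the product fan).} This is the main obstacle, since a priori a different complete simplicial fan could use the same rays.
\begin{itemize}
\item No cone contains all rays of a single class: their generators sum to $0$, which contradicts strong convexity.
\item A maximal cone has exactly $4=\sum_i(d_i-1)$ rays. Combined with the previous point, every maximal cone omits exactly one ray from each class, so it is a maximal cone of the product fan $\Delta_{\PP^{d_1-1}}\times\dots\times\Delta_{\PP^{d_s-1}}$.
\item The maximal cones of the product fan already have disjoint interiors and cover $N_\R$. A complete fan made of a subset of them must therefore contain all of them.
\end{itemize}
Hence $\Delta$ is the product fan and $X\cong\PP^{d_1-1}\times\dots\times\PP^{d_s-1}$, which proves (2).
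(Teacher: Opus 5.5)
Your proof is correct and complete for both parts. The paper gives no argument of its own here: it quotes the statement from \cite[Lemma 4.2]{esser2025symmetries} without proof, so there is no in-paper proof to compare against.

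Your Steps 1 and 2 use exactly the technique the paper applies later in the proof of \cref{thm:main2}. There, the sequence $0\to M\to\bigoplus\Z D_i\to\Cl(X)\to 0$ produces $u_j\in M$ with $\langle u_j,v_i\rangle=\delta_{ij}-\delta_{i4}$. Linear independence follows by pairing with the $v_j$, and the rays generate $N$ because the coefficients $c_j=\langle u_j,v\rangle$ are integers. Your Step 3 is the part the cited lemma needs beyond this, which the paper never spells out. Once the rays are identified with those of a product of simplex fans, three facts force $\Delta$ to be the product fan:
\begin{enumerate}
\item no cone contains a full class, since the class sums to $0$;
\item every maximal cone has exactly $\sum(d_i-1)=4$ rays;
\item the maximal cones of the product fan already tile $N_\R$.
\end{enumerate}

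The advantage of your route is that it is self-contained and purely lattice-theoretic. It uses only $\operatorname{rank}M=n$, so it proves the general-dimension version of the cited lemma word for word.

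Two cosmetic points. First, you use $e_\rho$ both for the standard basis of $\Z^{\Delta(1)}$ and for the dual basis of $N$; rename one of them. Second, in Step 3 you implicitly use that every maximal cone of a complete fan is full-dimensional. This is standard, but it is what guarantees exactly $4$ rays per maximal cone, so state it.
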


\begin{lemma}[{\cite[Lemma 4.4]{esser2025symmetries}}] \label{lem:d1_geq_4}
    If $S_6 \le \Autzero(X)$, then $d_1 \ge 4$. \qed
\end{lemma}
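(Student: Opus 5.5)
The plan is to push the finite group $S_6\le\Autzero(X)$ into a product of projective linear groups using \cref{thm:aut_structure}. Then I show that one factor $\operatorname{PGL}_{d_i}(\C)$ must contain a copy of $S_6$, and that this forces $d_i\ge 4$. I take $d_1$ to be the largest of the $d_i$, so the conclusion is $d_1\ge 4$.

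\textbf{Step 1 (reduction to $\prod_i \operatorname{PGL}_{d_i}$).}
\begin{enumerate}[label=(\alph*)]
\item By \cref{thm:aut_structure}(2), $\Autzero(X)\cong \Aut_g(R(X))/G$. Let $\widetilde H$ be the preimage of $S_6$ in $\Aut_g(R(X))$. It is an extension of $S_6$ by the diagonalizable group $G$.
\item A standard argument (the extension class is killed by $|S_6|$, so one may pass to a finite subgroup of $G$) yields a finite subgroup $H\le \widetilde H$ surjecting onto $S_6$.
\item By \cref{thm:aut_structure}(1), after conjugation we may assume $H\le G_s=\prod_{i=1}^s \operatorname{GL}_{d_i}(\C)$.
\item The quasitorus $G$ acts on each variable $x_\rho$ through the character $[D_\rho]$, and this character is the same for all rays in a given linear equivalence class. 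Hence $G$ lands in the centre $Z=\prod_i \C^\times\cdot \mathrm{Id}_{d_i}$ of $G_s$.
\item Consequently $S_6$ embeds into $G_s/G$, and $G_s/G$ surjects onto $\prod_i \operatorname{PGL}_{d_i}(\C)$ with abelian kernel $Z/G$.
\item The intersection of $S_6$ with $Z/G$ is an abelian normal subgroup of $S_6$, hence trivial. Therefore $S_6\hookrightarrow \prod_i \operatorname{PGL}_{d_i}(\C)$.
\end{enumerate}

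\textbf{Step 2 (one factor is faithful).} Let $\pi_i\colon S_6\to \operatorname{PGL}_{d_i}(\C)$ be the projections. Each kernel is a normal subgroup of $S_6$, namely $1$, $A_6$ or $S_6$. If no $\pi_i$ were injective, every kernel would contain $A_6$, contradicting injectivity of the product map. Hence $S_6\le \operatorname{PGL}_{d_i}(\C)$ for some $i$, and it suffices to show that $S_6$ does not embed in $\operatorname{PGL}_d(\C)$ for $d\le 3$.

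\textbf{Step 3 (small projective representations).} I will treat the three cases separately.
\begin{itemize}
\item For $d=1$ the claim is trivial.
\item For $d=2$, the finite subgroups of $\operatorname{PGL}_2(\C)$ are cyclic, dihedral, $A_4$, $S_4$ or $A_5$, none of order $720$.
\item For $d=3$ I will invoke Blichfeldt's classification of finite subgroups of $\operatorname{PGL}_3(\C)$. Imprimitive and reducible subgroups are extensions of abelian groups by subgroups of $S_3$ or of $\operatorname{PGL}_2$, so they cannot contain the simple group $A_6$ with the required structure. The primitive ones have orders $36,72,216,60,360,168$, so $720$ does not occur.
\end{itemize}
In particular, the Valentiner group $A_6\subset\operatorname{PGL}_3(\C)$ is self-normalizing up to the relevant outer part, so it does not extend to $S_6$. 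Thus $d_i\ge 4$, and hence $d_1\ge 4$.

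The main obstacle is this $d=3$ case, because $A_6$ really does embed into $\operatorname{PGL}_3(\C)$. One must therefore check carefully that no extension to $S_6$ exists. If I want to avoid quoting Blichfeldt, the fallback is a character-theoretic argument. An embedding $S_6\hookrightarrow\operatorname{PGL}_3$ lifts to a $3$-dimensional faithful representation of a central extension of $S_6$. Restricting it to the Schur cover $3.A_6$ gives one of the two non-real-conjugate $3$-dimensional characters, and the outer involution of $S_6$ swaps these. That contradicts the stability of the restriction.
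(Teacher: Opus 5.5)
Your proof is correct and follows essentially the same route as the argument from \cite[Lemma 4.4]{esser2025symmetries} that the paper cites: lift $S_6$ through \cref{thm:aut_structure} into $\prod_i \operatorname{PGL}_{d_i}(\C)$, use that the only normal subgroups of $S_6$ are $1$, $A_6$, $S_6$ to embed $S_6$ into a single factor, and conclude because $S_6$ has no faithful projective representation of degree at most $3$ (you check this via Blichfeldt's list rather than quoting minimal projective degrees). One wording fix: $\operatorname{PGL}_2(\C)$ does contain cyclic and dihedral groups of order $720$, so in the case $d=2$ the point is that $S_6$ is not isomorphic to any group on that list (it is non-solvable and larger than $A_5$), not that none of them has order $720$.
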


\cref{lem:d_i_bound} is a special case of \cite[Lemma 4.2]{esser2025symmetries}. \cref{lem:d1_geq_4} follows from the proof of \cite[Lemma 4.4]{esser2025symmetries}. Now we prove \cref{thm:main2}.

\begin{proof}[Proof of \cref{thm:main2}]
    By \cref{lem:d_i_bound} (1) and \cref{lem:d1_geq_4}, we have $d_1 = 4$ or $d_1 = 5$. If $d_1 = 5$, then $X \cong \PP^4$ by \cref{lem:d_i_bound} (2). So we now assume that $d_1 = 4$. If $d_2 \ge 3$, we have $(d_1-1) + (d_2 -1) \ge 3 + 2 = 5 > 4$, and this contradicts \cref{lem:d_i_bound} (1). So $d_2 =0,1$, or $2$. If $d_2 = 0$, then $|\Delta(1)| = 4$. However, this contradicts the completeness of $\Delta$. If $d_2 = 2$, then $X \cong \PP^3 \times \PP^1$ by \cref{lem:d_i_bound} (2). Note that $\PP^3 \times \PP^1$ is a trivial projective $\PP^1$-bundle over $\PP^3$, i.e., $\PP^3 \times \PP^1 \cong \PP(\mathcal{O} \oplus \mathcal{O}) \to \PP^3$.

    So, the remaining case is that $(d_1,d_2) = (4,1)$. Let 
    \[
        \Delta(1) = \{\rho_1,\dots,\rho_4\} \sqcup \{\rho_5\} \sqcup \dots \sqcup \{\rho_d\}
    \]
    be the partition by the linear equivalence classes of the corresponding divisors. There exists an exact sequence \cite[\S3.4]{MR1234037}:
    \[
        0 \longrightarrow M \longrightarrow \bigoplus_{i=1}^d \Z \cdot D_i \longrightarrow \Cl(X) \longrightarrow 0,
    \]
    where the homomorphism from $M$ to $\bigoplus_{i=1}^d \Z \cdot D_i$ is defined by 
    \[
        u \mapsto \sum_{i=1}^d\langle u,v_i \rangle D_i.
    \]
    Now, for $j = 1,2,$ and $3$, $D_j \sim D_4$, and hence, there is $u_j \in M$ such that
    \[
        \sum_{i=1}^d\langle u_j,v_i \rangle D_i = D_j - D_4.
    \]
    So $u_j$ satisfies the following:
    \[
        \langle u_j,v_i \rangle = \begin{cases}
            1 & \text{if } i = j, \\
            -1 & \text{if } i = 4, \\
            0 & \text{otherwise}.
        \end{cases}
    \]
    
    Next, we claim that $u_1, u_2$, and $u_3$ are linearly independent in the vector space $M_\R$. Indeed, for $a_j \in \R \ (j = 1,2,3)$,
    \[
        a_1u_1 + a_2u_2 + a_3u_3 = 0 \ \implies \ {}^{\forall}j \in \{1,2,3\}, \  0 = \langle a_1u_1 + a_2u_2 + a_3u_3,v_j \rangle = a_j.
    \]
    
    Therefore, $u_1^{\perp} \cap u_2^{\perp} \cap u_3^{\perp} \subset N_\R$ has dimension $1$. For $j \ge 5$, 
    \[
        \langle u_1,v_j \rangle = \langle u_2,v_j \rangle = \langle u_3,v_j \rangle = 0
    \]
    by the construction of $u_j$. Thus, $v_j \in u_1^{\perp} \cap u_2^{\perp} \cap u_3^{\perp}$ for $j \ge 5$. Any $1$-dimensional subspace has at most two rays in it, and hence, $d = 5$ or $d = 6$. Furthermore, if $d = 6$, then $\rho_6 = -\rho_5$.

    \medskip \noindent \textit{Case 1}: First, consider the case $d = 5$, i.e., $\Delta(1) = \{\rho_1,\dots,\rho_4\} \sqcup \{\rho_5\}$. 

    Since $\Delta$ is complete, $-v_5$ is in some cone $\sigma \in \Delta$. If $\sigma$ contains $\rho_5$ as its ray, then $\sigma$ contains the $1$-dimensional vector space $\operatorname{span}\{v_5\}$, which contradicts the simpliciality of $\Delta$. Thus, we may assume that $\sigma$ is the cone generated by $v_1,v_2,v_3,v_4$. Thus, there exist $a_i \in \Z_{\ge 0}$ for $i = 1, \dots, 5$ such that $a_5 > 0$ and
    \[
        -a_5 v_5 = \sum_{i=1}^4a_i v_i.
    \]
    For $j = 1,2,3$,
    \[
        0 = \langle u_j, -a_5 v_5 \rangle = \left\langle u_j, \sum_{i=1}^4a_i v_i \right\rangle = a_j - a_4.
    \]
    Thus, $a_1 = a_2 = a_3 = a_4$, and this implies that $v_1 + v_2 + v_3 + v_4 = -m v_5$ for some positive integer $m$.

    Furthermore, $v_1,\dots,v_5$ generate the whole lattice $N$. Indeed, let $N'$ be the sublattice generated by $v_1,\dots,v_5$ and let $v$ be any element in $N$. Since $v_1,v_2,v_3$, and $v_5$ form a $\Q$-basis of $N_\Q$, $v = c_1v_1 + c_2v_2 + c_3v_3 + c_5v_5$ for some $c_1,c_2,c_3,c_5 \in \Q$. Now, $v \in N$ implies that $c_j = \langle u_j,v \rangle \in \Z$ for $j = 1,2,3$, and hence $c_1v_1 + c_2v_2 + c_3v_3 \in N' \subset N$. Therefore, $c_5v_5 \in N$, and by the minimality of $v_5$, we have $c_5 \in \Z$. This means that $v \in N'$.

    In conclusion, $X$ is a simplicial toric variety defined by a fan $\Delta$ generated by $v_1,\dots,v_5$ which generate $N$ and have the relation
    \[
        v_1 + v_2 + v_3 + v_4 + mv_5 = 0
    \]
    for some positive integer $m$. Such an $X$ is $\PP(1,1,1,1,m)$. We will show that $S_6$ acts faithfully on this weighted projective space if and only if $m$ is a positive even integer (see \cref{lem:wp}).

    \medskip \noindent \textit{Case 2}: Next, consider the case $d = 6$, i.e., $\Delta(1) = \{\rho_1,\dots,\rho_4\} \sqcup \{\rho_5\} \sqcup \{-\rho_5\}$.
    \begin{claim} \label{claim:linear_indep}
        If $d = 6$ and $(d_1,d_2,d_3)= (4,1,1)$, then $v_1, \dots, v_4$ are linearly independent in $N_\R$.
    \end{claim}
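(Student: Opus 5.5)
Suppose, for contradiction, that $\sum_{i=1}^4 c_i v_i = 0$ with real coefficients $c_i$ that are not all zero. Pairing with $u_j$ for $j=1,2,3$ and using $\langle u_j, v_i\rangle = \delta_{ij} - \delta_{i4}$ on $v_1,\dots,v_4$ gives $c_j - c_4 = 0$. Hence all $c_i$ are equal, so $v_1+v_2+v_3+v_4 = 0$. This is the only possible linear dependence, and the rest of the argument shows it is incompatible with $\Delta$ being a complete simplicial fan.

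From $v_1+\dots+v_4=0$, the span $W=\operatorname{span}\{v_1,\dots,v_4\}$ is $3$-dimensional: $v_1,v_2,v_3$ are independent because they have independent pairings with $u_1,u_2,u_3$. Also $v_5 \notin W$, since the $u_j$ vanish on $v_5$ but the map $W\to\R^3$, $w\mapsto(\langle u_j,w\rangle)_j$, is injective (it sends $v_1,v_2,v_3$ to a basis). Thus $N_\R = W \oplus \R v_5$, and every ray of $\Delta$ is either in $W$ (namely $\rho_1,\dots,\rho_4$) or is $\pm\rho_5$.

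Every maximal cone of $\Delta$ is $4$-dimensional and simplicial, so it is spanned by four rays. A maximal cone cannot contain both $v_5$ and $-v_5$, since it would then contain a line, which contradicts strong convexity. It also cannot consist of four rays from $\{\rho_1,\dots,\rho_4\}$, since these span only $W$, which is $3$-dimensional. So each maximal cone is spanned by three of $v_1,\dots,v_4$ together with exactly one of $\pm v_5$.

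Now take a point $p$ in the interior of $\operatorname{cone}(v_1,\dots,v_4)$, say $p = v_1+\dots+v_4 = 0$ perturbed; more simply, consider the point $0 = v_1+v_2+v_3+v_4$ itself. Since the $v_i$ positively span the whole subspace $W$, every vector of $W$ lies in $\operatorname{cone}(v_1,\dots,v_4)$. Completeness of $\Delta$ is therefore consistent on $W$, but the faces $\operatorname{cone}(v_i,v_j,v_k)$ are the $3$-dimensional pieces of a fan in $W$. That fan has four rays summing to zero, so it is the fan of $\PP^3$ in $W$. The fan $\Delta$ is then the product-type fan with cones $\tau+\operatorname{cone}(\pm v_5)$, which is consistent. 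So this case is not contradictory by itself.

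I therefore need to reconsider what the claim excludes. The dependence must instead be ruled out using the grading. Pair the relation with an element of $M$: choose $u_5 \in M$ with $\langle u_5, v_5\rangle \neq 0$ and $u_5$ vanishing on $W$. Then the divisor relation coming from $u_5$ is $\langle u_5,v_5\rangle (D_5 - D_6)$, so $D_5 \sim D_6$ (if $\langle u_5,v_5\rangle=\pm1$) or at least they are rationally equivalent. This contradicts $(d_2,d_3)=(1,1)$, which says $\rho_5$ and $\rho_6$ lie in different linear equivalence classes.

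The main obstacle is integrality. I need some $u\in M$ with $u|_W=0$ and $\langle u,v_5\rangle = 1$, and $M$ may not allow this. I would handle it by showing that $v_1,v_2,v_3,v_5$ generate $N$, by the same argument as in Case 1: the coefficients of $v_1,v_2,v_3$ are integers via the $u_j$, and $v_5$ is primitive. In that basis the dual element $v_5^*$ works, since it vanishes on $v_4=-(v_1+v_2+v_3)$. This gives $D_5\sim D_6$, the desired contradiction.
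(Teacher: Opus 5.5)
Your argument is correct and is essentially the paper's: assuming dependence, you split $N_\R = W \oplus \R v_5$ and use the dual element $v_5^*$ to get $\operatorname{div}(\chi^{v_5^*}) = D_5 - D_6$, which contradicts $D_5 \not\sim D_6$. Your extra step showing that $v_1,v_2,v_3,v_5$ is a $\Z$-basis of $N$ supplies the justification for the lattice splitting $M = M_1 \oplus M_2$, which the paper asserts without proof. The middle paragraph on the cone structure is a dead end and can be deleted.
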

    We prove this claim first. If $v_1,\dots, v_4$ are linearly dependent, then $v_5 \notin \operatorname{span}\{v_1,\dots,v_4\}$ because $\Delta$ is complete. So $N_\R$ has the decomposition 
    \[
        N_\R = \operatorname{span}\{v_1,\dots, v_4\} \oplus \operatorname{span}\{v_5\}.
    \]
    Let $M_1$ be the dual lattice of $\operatorname{span}_\Z\{v_1,\dots,v_4\}$, and $M_2$ be the dual lattice of $\operatorname{span}_\Z\{v_5\}$, so $M = M_1 \oplus M_2$. By taking $u := (0,v_5^*) \in M_1 \oplus M_2 = M$, we have
    \[
        \sum_{i=1}^6\langle u,v_i \rangle D_i = \langle v_5^*,v_5 \rangle D_5 +\langle v_5^*,-v_5 \rangle D_6 = D_5 - D_6,
    \]
    which means $D_5 \sim D_6$, and contradicts the assumption. Therefore, $v_1,\dots,v_4$ are linearly independent, and the proof of the claim is completed.

    By the above claim, there exist $a_i \in \Z$ for $i = 1,\dots,5$ such that $a_5 >0$ and
    \[
        a_5v_5 = \sum_{i=1}^4a_i v_i.
    \]
    By the same argument as in the case $d = 5$, we get $a_1 = a_2 = a_3 = a_4$, and this implies that $v_1 + v_2 + v_3 + v_4 = av_5$ for some $a \in \Z$. Furthermore, $v_1,v_2,v_3,v_4,v_5,v_6$ generate $N$. In conclusion, $X$ is a complete simplicial toric variety defined by a fan $\Delta$ generated by $v_1,\dots,v_6$ with the relations
    \[
        v_1 + v_2 + v_3 + v_4 = av_5, \quad v_5 + v_6 = 0
    \]
    for some $a \in \Z$. Such an $X$ is exactly $\PP(\mathcal{O}(a) \oplus \mathcal{O}) \to \PP^3$. We will show that $S_6$ acts faithfully on this space if and only if $a$ is an even number (see \cref{lem:bdl}).
\end{proof}

\begin{lemma} \label{lem:wp}
    The symmetric group $S_6$ acts faithfully on $\PP(1,1,1,1,m)$ if and only if $m$ is a positive even integer.
\end{lemma}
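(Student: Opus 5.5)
We work directly with the Cox ring and \cref{thm:aut_structure}. For $m=1$ the variety is $\PP^4$, already treated via $d_1=5$, so we assume $m\ge 2$ (the case relevant to Case 1 of the proof of \cref{thm:main2}). Then $R(X)=\C[x_1,\dots,x_4,y]$ with $\Cl(X)\cong\Z$, $\deg x_i=1$ and $\deg y=m$. There are two linear equivalence classes, with $d_1=4$ and $d_2=1$, so $G_s=\operatorname{GL}_4(\C)\times\GG_m$. The quasitorus $G\cong\GG_m$ embeds as $t\mapsto(tI_4,t^m)$.

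\textbf{Step 1: reduce to a subgroup of $\operatorname{GL}_4(\C)/\mu_m$.} The fan symmetries are just permutations of $v_1,\dots,v_4$, and these already lie in $\prod S_{d_i}$. Hence by \cref{thm:aut_structure}(3) we have $\Aut(X)=\Autzero(X)$. By \cref{thm:aut_structure}(1),(2), any finite subgroup of $\Aut(X)$ is conjugate into $G_s/G$. In $G_s/G$ we may normalize the $\GG_m$-coordinate to $1$, which gives $G_s/G\cong\operatorname{GL}_4(\C)/\mu_m$, with $\mu_m$ embedded as scalars. So the lemma reduces to the following claim: $S_6$ embeds in $\operatorname{GL}_4(\C)/\mu_m$ if and only if $m$ is even.

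\textbf{Step 2: the case $m$ even.} We use that $S_6\subset\operatorname{PGL}_4(\C)$; this is the faithful action on $\PP^3$ already invoked in the paper. More precisely, we need a double cover $2.S_6$ with a faithful $4$-dimensional representation $H\subset\operatorname{GL}_4(\C)$ in which the central element acts by $-1$, so that $H\cap\{\text{scalars}\}=\mu_2$. This exists by the character tables of the double covers of $S_6$ in the ATLAS; it is essentially the spin representation coming from $\operatorname{Sp}_4(\mathbb F_3)$ or $2.A_6\subset 2.A_7\subset\operatorname{SL}_4$. When $m$ is even we have $\mu_2\subset\mu_m$. Consequently
\[
H\mu_m/\mu_m\cong H/(H\cap\mu_m)=H/\mu_2\cong S_6
\]
embeds in $\operatorname{GL}_4(\C)/\mu_m$.

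\textbf{Step 3: the case $m$ odd.} Suppose $S_6\subset\operatorname{GL}_4(\C)/\mu_m$, and let $H$ be its preimage. Then $H$ is a central extension of $S_6$ by a cyclic group $C\subset\mu_m$ of odd order $r$. By universal coefficients,
\[
H^2(S_6,\Z/r)\cong\operatorname{Hom}(H_2(S_6),\Z/r)\oplus\operatorname{Ext}(H_1(S_6),\Z/r).
\]
Here $H_1(S_6)=\Z/2$ and $H_2(S_6)=\Z/2$, so both terms vanish because $r$ is odd. Therefore the extension splits, and $S_6$ lifts to a subgroup of $\operatorname{GL}_4(\C)$, which would give a faithful $4$-dimensional representation of $S_6$. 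This is impossible. Every irreducible representation of $S_6$ of dimension below $5$ is $1$-dimensional, so a $4$-dimensional representation factors through the abelianization $\Z/2$.

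The main obstacle is Step 2: producing the lift of $S_6\subset\operatorname{PGL}_4$ with central kernel exactly $\mu_2$ rather than $\mu_4$. I would settle this by quoting the explicit $4$-dimensional faithful character of a double cover $2.S_6$ from the ATLAS. Alternatively, I would realize it through $S_6\cong\operatorname{Sp}_4(\mathbb F_2)$ and the Weil representation.
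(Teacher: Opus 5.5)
Your proof is correct, and for even $m$ it matches the paper's. Both use a faithful $4$-dimensional spin representation of a double cover whose centre acts by $-I_4$. That representation survives modulo the weight torus exactly when $\mu_2\subseteq\mu_m$.

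The genuine difference is the odd case. The paper keeps the quotient as $(\operatorname{GL}_4(\C)\times\C^*)/\C^*$ and only tests lifts of the form $\rho\times\tau$, with $\rho$ a faithful representation of $\widetilde S_6$. It uses that every $1$-dimensional character kills the central element, so $t^2=t^m=1$ forces $t=1$. It never justifies that every embedding of $S_6$ arises from such a lift: the other double cover, extensions by larger cyclic groups, and the fact that the preimage of $S_6$ is infinite are all left unaddressed. Your normalization $G_s/G\cong\operatorname{GL}_4(\C)/\mu_m$ makes the preimage of $S_6$ automatically a finite central extension by $\mu_m$. Then $H^2(S_6,\Z/m)=0$ for odd $m$ disposes of every possible lift at once. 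The price is quoting $H_2(S_6,\Z)\cong\Z/2$; the gain is a complete argument where the paper's rests on an unjustified reduction.

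The same fact also removes the ``main obstacle'' you flag in Step 2. Since $H^2(S_6,\C^*)\cong\Z/2$ is $2$-torsion, the preimage of $S_6\subset\operatorname{PGL}_4(\C)$ in $\operatorname{GL}_4(\C)$ already contains a subgroup $H$ with $H\cap\C^*=\mu_2$ and $H/\mu_2\cong S_6$. No character table is needed.

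Two minor remarks. First, \cref{thm:aut_structure}(1) concerns finite subgroups of $\Aut_g(R(X))$, not of its quotient by $G$, where preimages are infinite. It suffices instead that $U$ is torsion-free in characteristic $0$, so the projection $\Autzero(X)\cong U\rtimes(G_s/G)\to G_s/G$ is injective on finite subgroups. Second, your restriction to $m\ge 2$ is necessary, not merely convenient. $\PP(1,1,1,1,1)=\PP^4$ carries a faithful $S_6$-action, so the stated equivalence fails at $m=1$; the paper's proof tacitly assumes $m\ge 2$ as well, since its description of $\Aut(X)$ requires it.
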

\begin{proof}
    Let $X := \PP(1,1,1,1,m)$ and let $S = \C[x_0,\dots,x_4]$ be the weighted graded ring such that $X = \Proj S$. Since $X$ is well-formed (cf. \cite[\S 5]{esser2025symmetries}), we have a central extension
    \[
        1 \longrightarrow \C^* \longrightarrow \Aut(S) \longrightarrow \Aut(X) \longrightarrow 1,
    \]
    where $\C^*$ naturally acts by the weight, i.e., for $t \in \C^*$,
    \[
        t \cdot x_i = \begin{cases}
            tx_i & \text{if } i=0,1,2,3, \\
            t^mx_i & \text{if } i = 4.
        \end{cases}
    \]

    Since $\Aut(X)$ is the semidirect product
    \[
        \Aut(X) \cong R_U \rtimes (\operatorname{GL}_4(\C) \times \C^*)/\C^*,
    \]
    where $R_U$ is the unipotent radical of $\Aut(X)$ (\cite[Proposition 3.1]{MR4223973}) and $(\operatorname{GL}_4(\C) \times \C^*)/\C^*$ is obtained by dividing $\operatorname{GL}_4(\C) \times \C^*$ by the normal subgroup $\{(tI_4,t^m) \mid t \in \C^*\} \cong \C^*$, $S_6$ acts faithfully on $X$ if and only if there is an injective homomorphism 
    \[
        S_6 \hookrightarrow (\operatorname{GL}_4(\C) \times \C^*)/\C^*.
    \]

    So it is enough to show that there is an injection $\widetilde{S}_6 \hookrightarrow \operatorname{GL}_4(\C) \times \C^*$ that induces an injection $S_6 \hookrightarrow (\operatorname{GL}_4(\C) \times \C^*)/\C^*$ if and only if $m \in 2\Z_{>0}$.

    First, assume that $m \in 2\Z_{>0}$ and take a basic spin representation (cf. \cite{MR991411})
    \[
        \rho \colon \widetilde{S}_6 \hookrightarrow \operatorname{GL}_4(\C)
    \] 
    and a trivial representation 
    \[
        e \colon \widetilde{S}_6 \to \C^*.
    \]
    By taking the product, we obtain an injective homomorphism
    \[
        \rho \times e \colon \widetilde{S}_6 \hookrightarrow \operatorname{GL}_4(\C) \times \C^*.
    \]
    
    In order to verify that $\rho \times e$ induces an injective group homomorphism $S_6 \hookrightarrow (\operatorname{GL}_4(\C) \times \C^*)/\C^*$, it is enough to show that $(\rho \times e)^{-1}(\C^*) = \{\pm 1\}$. Since $(\rho \times e)^{-1}(\C^*)$ is the kernel of the composition of $\rho \times e$ with the natural projection to $(\operatorname{GL}_4(\C) \times \C^*)/\C^*$, it is a normal subgroup of $\widetilde{S}_6$. Furthermore, it is abelian since $\C^*$ is abelian. Now, 
    \[
        (\rho \times e)(-1) = (-I_4,1) = (-1\cdot I_4, (-1)^m) \in \C^*
    \]
    because $m \in 2\Z$. Therefore, $(\rho \times e)^{-1}(\C^*)$ is a nontrivial normal abelian subgroup of $\widetilde{S}_6$, and the only such subgroup is $\{\pm1\}$. 

    Next, assume that $m$ is an odd integer. Since the commutator subgroup of $\widetilde{S}_6$ is $2 \cdot A_6$, the double cover of $A_6$, and $\C^*$ is abelian, any representation of $\widetilde{S}_6$ of dimension $1$ must send $-1$ to $1$. Take any faithful representation $\rho \colon \widetilde{S}_6 \hookrightarrow \operatorname{GL}_4(\C)$ and any representation $\tau \colon \widetilde{S}_6 \to \C^*$ of degree $1$. If $\rho \times \tau$ induces a homomorphism from $S_6$ to $(\operatorname{GL}_4(\C)\times \C^*)/\C^*$, then $(\rho \times \tau)(-1)$ needs to be in $\C^*$. So there is some $t \in \C^*$ such that
    \[
        (\rho\times \tau)(-1) = (\rho(-1),1) = (tI_4,t^m).
    \]
    This implies $t^m = 1$ and $\rho(-1) = tI_4$. Now $(-1)^2 = 1$ in $\widetilde{S}_6$, and hence
    \[
        \rho(-1)^2 = t^2I_4 = I_4.
    \]
    Since $m$ is an odd number, $t^m = 1$ and $t^2 = 1$ imply $t = 1$, which means that $\rho(-1) = I_4$. This contradicts the faithfulness of $\rho$. Therefore, $\rho \times \tau$ does not induce 
    \[
        S_6 \hookrightarrow (\operatorname{GL}_4(\C)\times \C^*)/\C^*. \qedhere
    \]  
\end{proof}

\begin{lemma} \label{lem:bdl}
    The symmetric group $S_6$ acts faithfully on the projective bundle $\PP(\mathcal{O}(a) \oplus \mathcal{O}) \to \PP^3$ if and only if $a$ is an even integer.
\end{lemma}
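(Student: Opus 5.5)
We will follow the strategy of \cref{lem:wp}, replacing the weighted projective space by the Cox ring description of $X := \PP(\mathcal{O}(a) \oplus \mathcal{O}) \to \PP^3$. The case $a=0$ is $\PP^3 \times \PP^1$, where $S_6$ acts through the faithful action on $\PP^3$ coming from the basic spin representation. So we may assume $a \neq 0$, and by the symmetry $\PP(\mathcal{O}(a)\oplus\mathcal{O}) \cong \PP(\mathcal{O}(-a)\oplus\mathcal{O})$ we may take $a>0$.

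\textbf{Cox ring and reductive part.} The Cox ring is $R(X)=\C[x_1,\dots,x_4,y_5,y_6]$ graded by $\Cl(X)\cong\Z^2$. From the relations $v_1+\dots+v_4=av_5$ and $v_5+v_6=0$ we get
\[
\deg x_i=(1,0), \qquad \deg y_5=(0,1), \qquad \deg y_6=(-a,1)
\]
(up to sign conventions). Since $a \neq 0$, the classes of $y_5$ and $y_6$ are distinct. By \cref{thm:aut_structure}, any finite subgroup of $\Autzero(X)$ is therefore conjugate into
\[
G_s/G = \bigl(\operatorname{GL}_4(\C)\times\C^*\times\C^*\bigr)/G,
\]
where $G=\{(tI_4,s,t^{-a}s) : t,s \in \C^*\}\cong(\C^*)^2$. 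We can use $s$ to normalise the first $\C^*$ factor to $1$, so this quotient is isomorphic to $(\operatorname{GL}_4(\C)\times\C^*)/\{(tI_4,t^{a})\}$. This is exactly the group appearing in \cref{lem:wp}, with $m$ replaced by $a$ (or by $-a$, which has the same parity).

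\textbf{Reduction and the two directions.} A faithful $S_6 \le \Autzero(X)$ lifts to a finite subgroup $H \le \operatorname{GL}_4(\C)\times\C^*$ mapping onto $S_6$ with central kernel $K$. The $\operatorname{GL}_4$-component of $H$ gives a projective representation $S_6\to\operatorname{PGL}_4$. This representation must be injective, because the second factor is abelian and $S_6$ has no nontrivial normal subgroup acting by scalars while still being compatible with a faithful action. Hence it comes from the basic spin representation of $\widetilde S_6$, since $S_6$ has no faithful linear representation of dimension $4$. We may therefore assume $H$ is the image of $\widetilde S_6$ under $\rho\times\tau$, and the argument of \cref{lem:wp} applies verbatim:
\begin{itemize}
\item If $a$ is even, $\rho\times e$ sends $-1$ to $(-I_4,1)=(-I_4,(-1)^a)$, which lies in the kernel, and we obtain $S_6\hookrightarrow\Autzero(X)$.
\item If $a$ is odd, $\tau(-1)=1$ forces $\rho(-1)=tI_4$ with $t^a=1$ and $t^2=1$, hence $t=1$. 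This contradicts the faithfulness of $\rho$.
\end{itemize}

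\textbf{Main obstacle.} The delicate step is justifying that a faithful $S_6$ inside this quotient group must lift to a faithful $4$-dimensional representation of $\widetilde S_6$, rather than to some other central extension (for example, one with a larger cyclic kernel). The plan is to show that the preimage of $S_6$ in $\operatorname{GL}_4\times\C^*$ can be shrunk to a finite central extension. Its derived subgroup is then a perfect central extension of $A_6$ acting faithfully and projectively in dimension $4$, which forces it to be $2\cdot A_6$ and not $3\cdot A_6$ or $6\cdot A_6$, since those have no faithful $4$-dimensional representations. Extending from $A_6$ to $S_6$ then gives $\widetilde S_6$, up to the isoclinic variant, which is handled identically because the relevant element $-1$ is central of order $2$ in both cases.
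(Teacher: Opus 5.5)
Your argument is correct, but it takes a different route from the paper's.

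\textbf{The paper's route.} The paper never computes $\Autzero(X)$ itself. For $a>0$ it regards $X$ as the blow-up of $Y=\PP(1,1,1,1,a)$ at $[0:0:0:0:1]$ and uses Blanchard's lemma to get $\Autzero(X)\hookrightarrow\Autzero(Y)$. It then quotes \cref{lem:wp}. For the converse, it lifts the $S_6$-action on $Y$ to the blow-up of the unique singular point, which every automorphism fixes.

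\textbf{Your route.} You read off the reductive part of $\Autzero(X)$ directly from the Cox ring,
$(\operatorname{GL}_4(\C)\times\C^*\times\C^*)/G\cong(\operatorname{GL}_4(\C)\times\C^*)/\{(tI_4,t^{\pm a})\}$,
and rerun the group theory of \cref{lem:wp} inside that group.

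\textbf{What each buys.} The paper's route is shorter, and its converse is purely geometric. Yours is self-contained, and it is more robust at $|a|=1$. There $Y=\PP^4$, whose automorphism group is $\operatorname{PGL}_5$ rather than the group used in the proof of \cref{lem:wp}. Since $S_6$ does act on $\PP^4$, the Blanchard step alone gives no contradiction. One would additionally need that the image of $\Autzero(X)$ fixes the blown-up point. Your computation gives this automatically: the Levi part is $\cong\operatorname{GL}_4(\C)$, which contains no $S_6$.

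\textbf{Two points to tighten in your ``main obstacle'' paragraph.}
\begin{enumerate}
\item $3\cdot A_6$ does have faithful $4$-dimensional representations, for example a faithful $3$-dimensional one plus a trivial summand. The correct claim is this: among the perfect central extensions of $A_6$, only $2\cdot A_6$ has a $4$-dimensional representation in which the centre acts by scalars and which induces a faithful projective representation of $A_6$. That is the relevant constraint, because $K$ maps to scalar matrices in $\operatorname{GL}_4(\C)$.
\item You do not need to identify the finite lift $\Gamma$ with $\widetilde S_6$ or its isoclinic twin. Once the perfect group $\Gamma''\cong 2\cdot A_6$ has its central involution $z$ acting as $-I_4$, two facts finish the argument. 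First, $z\in[\Gamma,\Gamma]$, so its $\C^*$-component is $1$. Second, $z\in K\subset\{(tI_4,t^{\pm a})\}$. Together these give $(-1)^a=1$ directly.
\end{enumerate}
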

\begin{proof}
    Let $X := \PP(\mathcal{O}(a) \oplus \mathcal{O}) \to \PP^3$. By the proof of \cref{thm:main2}, it is enough to show that $S_6 \le \Autzero(X)$ if and only if $a$ is an even integer. 
    
    First, assume that $S_6 \le \Autzero(X)$. Without loss of generality, we may assume $a \ge 0$. If $a = 0$, there is nothing to prove. For $a > 0$, let $Y$ be the weighted projective space $\PP(1,1,1,1,a)$. Since $X$ is the blow-up of $Y$ at a point, it follows from Blanchard's lemma (\cite[Corollary 7.2.2]{brion2015some}) that there is an injective homomorphism
    \[
        \Autzero(X) \hookrightarrow \Autzero(Y).
    \]
    By assumption, $S_6 \le \Autzero(Y)$, and hence $a \in 2\Z$ by \cref{lem:wp}.

    Conversely, assume that $a$ is an even number. It follows from \cref{lem:wp} that $S_6$ acts faithfully on $Y$. Since $X$ is the blow-up of $Y$ at its unique singular point, this $S_6$-action extends naturally to $X$.
\end{proof}

\section{Toric varieties over non-closed fields}
\label{sec:non_closed}

\subsection{Setup and Main Theorem}
\label{subsec:setup_main_thm}

In the previous sections, we considered complete simplicial toric varieties over the field of complex numbers $\C$. Combining the results of \cite[Theorem 4.1]{esser2025symmetries} with our \cref{thm:main2}, the maximum degree $m$ of the symmetric group $S_m$ acting faithfully on an $n$-dimensional complete simplicial toric variety over $\C$, and the classification of such varieties, are summarized in the following table:

\begin{table}[ht]
    \centering
    \renewcommand{\arraystretch}{1.2}
    \begin{tabular}{ccl}
        \hline
        Dimension & Max Degree & Varieties $X$ over $\C$ \\
        \hline
        $1$ & $S_4$ & $\PP^1$ \\
        $2$ & $S_5$ & $\PP^1 \times \PP^1$ \\
        $3$ & $S_6$ & $\PP^3$ \\
        $4$ & $S_6$ & $\PP^4$, $\PP^2 \times \PP^2$, $\PP(\mathcal{O}_{\PP_3} \oplus \mathcal{O}_{\PP_3}(2a))$, $\PP(1^4,2m)$ \\
        $n \ge 5$ & $S_{n+2}$ & $\PP^n$ \\
        \hline
    \end{tabular}
    \vspace{0.2cm}
    \caption{Maximal symmetric actions on toric varieties over $\C$.}
    \label{tab:complex_classification}
\end{table}

In this section, we transition from the algebraically closed field $\C$ to more general fields. Let $k$ be a field of characteristic zero. The representation theory of finite groups over $k$ strongly depends on whether certain representations can be realized over $k$. Specifically, the existence of certain representations of symmetric and alternating groups imposes arithmetic conditions on the base field. 

\vspace{0.5em}
\noindent\begin{minipage}{\textwidth}
\hspace*{1.5em}Throughout this section, we assume that the base field $k$ satisfies the following condition:

\vspace{0.5em}
\phantomsection\label{cond:star}
\def\@currentlabel{($\star$)}
\begin{itemize}[leftmargin=5.5em]
    \item[\textbf{($\star$)}\quad{\rm (1)}] The characteristic of $k$ is zero.
    \item[\hphantom{\textbf{($\star$)}\quad}{\rm (2)}] The equation $a^2 + b^2 = -1$ has no solution for $a, b \in k$.
    \item[\hphantom{\textbf{($\star$)}\quad}{\rm (3)}] $\sqrt{5} \in k$, or $k(\sqrt{5})$ also satisfies the condition {\rm(2)}.
\end{itemize}
\end{minipage}
\vspace{0.5em}

The second requirement is exactly the necessary and sufficient condition under which the symmetric group $S_4$ cannot be embedded into the projective linear group $\operatorname{PGL}_2(k)$. Typical examples of fields satisfying the condition \hyperref[cond:star]{($\star$)} include the field of rational numbers $\Q$ and the field of real numbers $\R$. Note that this condition naturally implies $\sqrt{-3} \notin k$, which will be crucial in our subsequent classification (\cref{subsec:dim3}).

Under this arithmetic condition, the allowed symmetries are significantly restricted compared to the complex case. The main result of this paper is the following classification over such fields, which highlights a striking rigidity in dimensions $n=1$ and $n \ge 3$, while revealing a rich structure in dimension $n=2$.

\begin{theorem}[Main Theorem] \label{thm:main_nonclosed}
    Let $k$ be a field satisfying the condition \hyperref[cond:star]{\rm{($\star$)}}. Let $m$ be the maximum degree of the symmetric group $S_m$ acting faithfully on an $n$-dimensional complete simplicial toric variety $X$ over $k$. Then $m$ and the isomorphism classes of $X$ are given as follows:
    
    \begin{table}[ht]
        \centering
        \renewcommand{\arraystretch}{1.2}
        \begin{tabular}{ccl}
            \hline
            Dimension & Max Degree & Varieties $X$ over $k$ \\
            \hline
            $1$ & $S_3$ & $\PP^1$ \\
            $2$ & $S_4$ & Infinitely many surfaces (see \cref{prop:dim2_classification}) \\
            $n \ge 3$ & $S_{n+2}$ & $\PP^n$ \\
            \hline
        \end{tabular}
        \vspace{0.2cm}
        \caption{Maximal symmetric actions on toric varieties over $k$.}
        \label{tab:nonclosed_classification}
    \end{table}
\end{theorem}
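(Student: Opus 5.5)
The plan is to prove the upper bound $m \le n+2$ and the lower bound $m \ge n+2$ separately, and then classify the varieties attaining equality. The classification is done dimension by dimension, following the division into \cref{subsec:dim1_and_5}, \cref{subsec:dim3} and \cref{subsec:dim2}.

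\textbf{Lower bound.} For $n \ge 3$, the standard permutation action of $S_{n+2}$ on $k^{n+2}$ preserves the hyperplane $\sum x_i = 0$. The induced map $S_{n+2} \to \operatorname{PGL}_{n+1}(k)$ is faithful, because its kernel is a normal subgroup acting by scalars, and for $n+2 \ge 5$ this kernel is trivial. This gives a faithful action on $\PP^n_k$. For $n=1$, the group $S_3$ acts on $\PP^1_k$ through its $2$-dimensional reflection representation, which is defined over $\Q$. For $n=2$, the group $S_4$ acts on $\PP^2_k$ through the standard representation. This is also the starting point of the infinite family of \cref{prop:dim2_classification}.

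\textbf{Upper bound.} Suppose $S_m$ acts faithfully on $X$ over $k$. Base change gives a faithful action on $X_{\overline{k}}$, which is a complete simplicial toric variety over $\overline{k}$. By \cref{tab:complex_classification} (valid over any algebraically closed field of characteristic zero), this gives $m \le n+2$ for $n \ge 5$ and for $n=3$, and in both cases $X_{\overline{k}} \cong \PP^n$.

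The remaining small dimensions need arithmetic input.
\begin{itemize}
\item For $n=4$, I would first rule out $m=7$. This should follow from the complex analysis: over $\C$ the maximum is $S_6$, and $S_7$ does not embed into any of the groups occurring there. I would double-check this against \cite[Theorem 4.1]{esser2025symmetries}.
\item For $n=1$, we have $\Aut(\PP^1_k)=\operatorname{PGL}_2(k)$. If $X$ is a pointless conic, its automorphism group is $\operatorname{PGL}_1(Q)$ for a quaternion algebra $Q$. Condition ($\star$)(2) excludes $S_4 \le \operatorname{PGL}_2(k)$, as stated. I need to check that the same condition also excludes $S_4$ acting on nonsplit conics.
\item For $n=2$, the geometric maximum is $S_5$, acting on $\PP^1\times\PP^1$. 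Any $S_5$-action would have to contain $A_5$ acting through $\operatorname{PGL}_2$ of $k$ or of a quadratic extension, as in the Weil restriction example. The condition ($\star$)(3), together with $A_4 \subset A_5$ requiring $-1$ to be a sum of two squares over $k(\sqrt5)$, excludes this.
\end{itemize}

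\textbf{Classification.} For $n \ge 3$ we know $X_{\overline{k}}\cong \PP^n$ (the case $n=4$ is handled below), so $X$ is a toric $k$-form of $\PP^n$. This means the Galois group acts on the fan of $\PP^n$ by permuting the $n+1$ rays, through a subgroup of $S_{n+1}$ commuting with nothing forced a priori. The key step is to show that the $S_{n+2}$-action forces this Galois action to be trivial, and that the Brauer class vanishes, so that $X\cong\PP^n_k$. I would argue as follows. By \cref{thm:aut_structure}, the group $S_{n+2}$ lies in $\Autzero$, and the Galois twist of $\operatorname{PGL}_{n+1}$ must contain $S_{n+2}$. A twisted form of $\PP^n$ carrying a torus has the form $\operatorname{Res}$-type built from étale algebras, and its $\Autzero$ is an inner or outer form of $\operatorname{PGL}_{n+1}$. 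I would then use that the projective representation of $S_{n+2}$ of degree $n+1$ is absolutely irreducible and realized over $\Q$, hence rigid, so the form is split.

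For $n=3$, $S_5$ and $S_6$ can also act through spin-type representations. Realizing the $4$-dimensional projective representation of $S_6$ (via $A_6$) requires $\sqrt{-3}$ or $\sqrt{-1}$-type elements, which ($\star$)(2) rules out. This is the note about $\sqrt{-3}\notin k$.

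For $n=4$, the list from \cref{thm:main2} must be cut down to $\PP^4_k$. The weighted projective spaces and projective bundles only admit $S_6$ through the spin representation $\widetilde S_6 \to \operatorname{GL}_4$, which is not defined over $k$ under ($\star$); this rules them out. The variety $\PP^2\times\PP^2$ would need $S_6$ acting via $A_6 \subset \operatorname{PGL}_3$, which requires $\sqrt{-15}$-type irrationalities; I would check these are also excluded. The remaining candidates have only $S_6$ acting while $\PP^4$ has $S_6$, so $m=6=n+2$, and only $\PP^4_k$ survives.

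The case $n=2$ is deferred to \cref{prop:dim2_classification}. There one runs the $S_4$-MMP of \cref{subsec:equivariant_mmp}, using $N^{S_4}=0$ so that $\rho^{S_4}$ counts orbits of rays.

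\textbf{Main obstacle.} The hardest step is the arithmetic exclusion of nonsplit forms and spin-type projective representations in dimensions $1$, $3$ and $4$. For this I would compute Schur indices and fields of character values of the relevant projective representations, and compare them with ($\star$)(2).
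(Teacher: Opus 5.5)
\textbf{Main gap: dimension $3$.} You read off from \cref{tab:complex_classification} that base change gives $m\le n+2$ and $X_{\overline{k}}\cong\PP^n$ for $n=3$. The table actually gives $S_6$, not $S_5$, as the geometric maximum for $3$-folds. Your later spin-representation argument can exclude $S_6$, which recovers $m\le 5$. But uniqueness then concerns $3$-folds carrying only an $S_5$-action, and the classification of \emph{maximal} actions over $\overline{k}$ says nothing about these. By \cref{lemma:n3}, $S_5$ acts on $\PP^3$ and also on every $\PP(\mathcal{O}(a,a)\oplus\mathcal{O})\to\PP^1\times\PP^1$, including $(\PP^1)^3$. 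Your proposal never rules out $k$-forms of these, so $X\cong\PP^3_k$ is not proved. This is exactly where the paper uses ($\star$)(2) and ($\star$)(3) in dimension $3$:
\begin{itemize}
\item In the split form, $A_5$ would have to embed in $\operatorname{PGL}_2(k)$, which ($\star$)(2) forbids.
\item In a form whose base (or a factor) is $\operatorname{Res}_{L/k}(\PP^1_L)$, an odd permutation acts by $h\mapsto g\,\sigma(h)\,g^{-1}$ on $\operatorname{PGL}_2(L)$. It must induce the outer automorphism of $A_5$, which swaps the two classes of elements of order $5$. This forces $\sigma(\sqrt5)=-\sqrt5$, i.e.\ $L=k(\sqrt5)$, and there $A_5\not\subset\operatorname{PGL}_2(L)$ by ($\star$)(3).
\item In a cubic twist of $(\PP^1)^3$, all of $S_5$ would have to lie in $\operatorname{PGL}_2(L)$, which is impossible.
\end{itemize}
The same outer-automorphism step is missing from your $n=2$ sketch. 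Condition ($\star$)(3) says nothing about an arbitrary quadratic $L$, so you must first show $L=k(\sqrt5)$.

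\textbf{Second issue: passing from $X_{\overline{k}}\cong\PP^n$ to $X\cong\PP^n_k$.} Your planned route should be replaced. Showing that the $S_{n+2}$-action forces the Galois action on the fan to be trivial is neither needed nor true. The variety $\PP^n_k$ itself carries non-split toric structures, namely the torus $\operatorname{Res}_{E/k}\GG_m/\GG_m$ acting on $\PP(E)$ for a non-split \'etale algebra $E$. The group $S_{n+2}\subset\operatorname{PGL}_{n+1}(k)$ need not normalize that torus. Nor does realizability of a projective representation over $\Q$ by itself kill a Brauer class. In fact the check you flag for $n=1$ comes out the wrong way: over $\R$ the pointless conic $x^2+y^2+z^2=0$ has automorphism group $\operatorname{SO}_3(\R)\supset S_4$.

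What you are missing, and what the paper uses via \cite[Theorem 5.1.3]{GS06}, is that the identity of the torus is a $k$-rational point. Hence every toric $k$-form of $\PP^n$ is a Severi--Brauer variety with a rational point, and so is $\PP^n_k$. This disposes of pointless conics for $n=1$ and gives $X\cong\PP^n_k$ at once for $n\ge5$. It also reduces the exclusion of $S_6$ for $n=3$ to the statement $S_6\not\hookrightarrow\operatorname{PGL}_4(k)$.
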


In the subsequent subsections, we will prove \cref{thm:main_nonclosed} step by step according to the dimension $n$.

\subsection{Cases \texorpdfstring{$n = 1$}{n=1} and \texorpdfstring{$n \ge 5$}{n5}}
\label{subsec:dim1_and_5}

In this subsection, we treat the cases of dimension $n=1$ and $n \ge 5$. These cases can be handled straightforwardly by combining the arithmetic condition on the base field with a base change argument to the algebraic closure.

\begin{proposition} \label{prop:dim1_and_5}
    Let $k$ be a field satisfying the condition \hyperref[cond:star]{\rm{($\star$)}}. Let $X$ be an $n$-dimensional complete simplicial toric variety over $k$. If $n \notin \{2,3,4\}$ and the symmetric group $S_m$ acts faithfully on $X$, then $m \le n+2$. Furthermore, if $m = n+2$, then $X \cong \PP^n_k$.
\end{proposition}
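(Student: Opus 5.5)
The plan is to pass to the algebraic closure and use the classification over $\C$ (\cref{tab:complex_classification}), adding an arithmetic argument only for $n=1$. A faithful $k$-action of $S_m$ on $X$ gives a faithful $\overline{k}$-action on $X_{\overline{k}}$, which is a complete simplicial split toric variety of dimension $n$. Since $\overline{k}$ is algebraically closed of characteristic zero, the results of \cite{esser2025symmetries} apply verbatim, as they only use the Cox ring description of \cref{thm:aut_structure}.

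For $n\ge 5$, the table gives $m\le n+2$. If $m=n+2$, then $X_{\overline{k}}\cong\PP^n_{\overline{k}}$, so $X$ is a $k$-form of projective space with a torus action, i.e.\ a toric Severi--Brauer variety. To show $X\cong\PP^n_k$, I would use the fact that a toric variety always has a $k$-rational point: the identity of the open torus $T$. A Severi--Brauer variety with a rational point is split, so $X\cong\PP^n_k$. (Alternatively, one could observe that the Galois action on the fan of $\PP^n$ permutes the $n+1$ rays, and descend the Cox ring description; the rational-point argument is shorter.)

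For $n=1$, the same argument shows $X\cong\PP^1_k$: $X_{\overline{k}}\cong\PP^1$, and $X$ is a conic with a rational point. Then $\Aut_k(X)=\operatorname{PGL}_2(k)$. By condition ($\star$)(2), $S_4$ does not embed in $\operatorname{PGL}_2(k)$, as noted right after condition \hyperref[cond:star]{($\star$)}. Hence $m\le 3=n+2$. Conversely, $S_3$ acts on $\PP^1_k$ over any field, for instance as the permutations of $\{0,1,\infty\}$ generated by $z\mapsto 1/z$ and $z\mapsto 1-z$, which are defined over $\Q$. The equality case is then automatic, since every $1$-dimensional $X$ is $\PP^1_k$.

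The main obstacle is the $n=1$ bound, because it is the only step that genuinely uses ($\star$). I would justify the stated equivalence directly if needed. A subgroup $S_4\subset\operatorname{PGL}_2(k)$ contains a Klein four-group $V_4$. Lift two commuting involutions to $A,B\in\operatorname{GL}_2(k)$ with traceless squares scalar; they anticommute, and $A^2=\alpha$, $B^2=\beta$ define a quaternion algebra $(\alpha,\beta)_k\cong M_2(k)$. The extra order-$3$ element normalizing $V_4$ and cyclically permuting it forces $\alpha\sim\beta\sim-\alpha\beta$ up to squares. This reduces the algebra to $(-1,-1)_k$ being split, which is exactly the solvability of $a^2+b^2=-1$. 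The remaining steps are routine.
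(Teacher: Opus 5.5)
Your proposal is correct and follows the paper's proof. The bound comes from base change to $\overline{k}$ plus the classification over $\C$, and the identification $X\cong\PP^n_k$ from the torus's rational point plus the splitting of Severi--Brauer varieties with a rational point. For $n=1$ both use the obstruction $S_4\not\hookrightarrow\operatorname{PGL}_2(k)$ under condition ($\star$)(2). The only difference is that you justify this obstruction directly via the quaternion algebra $(-1,-1)_k$ instead of citing Beauville; your argument is valid and in fact already rules out $A_4$.
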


\begin{proof}
    First, we consider the case $n=1$. The only complete toric variety of dimension $1$ is the projective line $X \cong \PP^1_k$, and its automorphism group is exactly the projective linear group $\operatorname{PGL}_2(k)$. By the condition \hyperref[cond:star]{\rm{($\star$)}}, the equation $a^2 + b^2 = -1$ has no solution in $k$. It is a classical result (see, e.g., \cite{beauville2009finite}) that this is precisely the condition under which the symmetric group $S_4$ cannot be embedded into $\operatorname{PGL}_2(k)$. Thus, $S_4$ cannot act faithfully on $\PP^1_k$, meaning $m < 4$. On the other hand, the symmetric group $S_3 \cong D_3$ naturally embeds into $\operatorname{PGL}_2(k)$ for any field $k$ of characteristic zero. Therefore, the maximum degree is $m=3$, which is realized by $\PP^1_k$.

    Next, we consider the case $n \ge 5$. The symmetric group $S_{n+2}$ admits an $(n+1)$-dimensional standard representation defined over $\Q$, and thus over $k$. Projectivizing this representation gives a faithful linear action of $S_{n+2}$ on $\PP^n_k$, which shows that the maximum degree $m$ is at least $n+2$.
    
    To prove the upper bound and the uniqueness, assume that $S_m$ acts faithfully on an $n$-dimensional complete simplicial toric variety $X$ over $k$. Let $\overline{k}$ be an algebraic closure of $k$. Then the base change $X_{\overline{k}} := X \times_{\Spec k} \Spec \overline{k}$ is an $n$-dimensional complete simplicial toric variety over $\overline{k}$, and the $S_m$-action naturally extends to $X_{\overline{k}}$. Since $\overline{k}$ is an algebraically closed field of characteristic zero, the structural results and the classification over $\C$ apply identically to $X_{\overline{k}}$. According to \cref{tab:complex_classification}, for $n \ge 5$, the maximum degree of a symmetric group acting on an $n$-dimensional complete simplicial toric variety is $n+2$. This implies that $m \le n+2$.

    Furthermore, if the equality $m = n+2$ holds, the classification over the algebraic closure dictates that $X_{\overline{k}} \cong \PP^n_{\overline{k}}$ (i.e., $X$ is a Severi--Brauer variety). Since $X$ contains a rational point, we have $X \cong \mathbb{P}_k^n$ by \cite[Theorem 5.1.3]{GS06}.
\end{proof}

\subsection{Case \texorpdfstring{$n = 3$}{n=3} and \texorpdfstring{$n=4$}{n=4}}
\label{subsec:dim3}

In this subsection, we completely classify 3 and 4 dimensional complete simplicial toric varieties admitting maximal symmetric actions over $k$. According to the classification over $\C$ (see \cref{tab:complex_classification}), the maximum degree of the symmetric group acting on a $3$-fold is $6$, which is realized uniquely by $\PP^3$. However, over a field $k$ satisfying the condition \hyperref[cond:star]{\rm{($\star$)}}, the $S_6$-action is obstructed.

Before proceeding to the proof, we classify 3-dimensional complete simplicial toric varieties over $\mathbb{C}$ admitting an $S_5$-action.

\begin{lemma}\label{lemma:n3}
    Let $X$ be a 3-dimensional complete simplicial toric variety over $\C$. Then $S_5$ acts faithfully on $X$ if and only if $X$ is isomorphic to one of the following:
    \begin{itemize}
        \item $\PP^3$,
        \item $\PP(\mathcal{O}(a,a) \oplus \mathcal{O}) \to \PP^1 \times \PP^1$ for some integer $a \in \mathbb{Z}$.
    \end{itemize}
\end{lemma}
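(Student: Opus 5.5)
The plan is to run the argument of \cref{thm:main2} in dimension $3$, with $S_5$ in place of $S_6$. The new feature is that $S_5$ need not lie in $\Autzero(X)$; in the $\PP^1\times\PP^1$ case its odd part will come from the component group.

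\emph{Reduction to $A_5\le \Autzero(X)$.} Suppose $S_5$ acts faithfully on $X$. Then $S_5\cap \Autzero(X)$ is a normal subgroup of $S_5$, hence equal to $1$, $A_5$ or $S_5$. If it were trivial, $S_5$ would embed into $\Aut(X)/\Autzero(X)$, which is a quotient of $\Aut(N,\Delta)\subset \operatorname{GL}_3(\Z)$ by \cref{thm:aut_structure}. Finite subgroups of $\operatorname{GL}_3(\Z)$ have order dividing $48$, so this is impossible. Hence $A_5\le\Autzero(X)$.

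By \cref{thm:aut_structure}(1),(2), $A_5$ is then conjugate into the image of $G_s=\prod\operatorname{GL}_{d_i}(\C)$ modulo the quasitorus $G$. It therefore acts faithfully by projective representations on the blocks of sizes $d_i$. Since $A_5$ has no nontrivial projective representation of degree $1$, some block has $d_1\ge 2$. The three-dimensional analogue of \cref{lem:d_i_bound} gives $\sum(d_i-1)\le 3$, with equality only for products of projective spaces.

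\emph{Case analysis on $(d_1,d_2,\dots)$.}
\begin{enumerate}[label={\rm (\roman*)}]
\item $d_1=4$: equality holds, so $X\cong\PP^3$.
\item $d_1=3$: the argument with the $u_j$ from \cref{thm:main2} puts all remaining rays on a line. This gives $\PP(1,1,1,m)$ or $\PP(\mathcal{O}(a)\oplus\mathcal{O})\to\PP^2$. The three-element class is unique, so every automorphism preserves the $\PP^2$-structure (the quotient $\PP^2$, respectively the base $\PP^2$). Thus $S_5$ would act on $\PP^2$, faithfully because the kernel is normal and cannot contain $A_5$ (as $A_5$ acts faithfully there). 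This contradicts the fact that $S_5$ has no faithful projective representation of degree $3$: the irreducible representations of $S_5$ and of $2\cdot S_5$ have no dimension $3$.
\item $(d_1,d_2,d_3)=(2,2,2)$: then $X\cong(\PP^1)^3$. An odd element of $S_5$ permutes the three factors by an element of order $\le 2$, so it fixes one factor. The setwise stabilizer of that factor then contains $A_5$ and elements outside it, so it is all of $S_5$, and $S_5$ acts on that $\PP^1$. The kernel of this action is normal and cannot contain $A_5$, which acts on each factor through $\operatorname{PGL}_2$ and so nontrivially on at least one of them; iterating over factors yields a faithful $S_5\le\operatorname{PGL}_2(\C)$, which is impossible.
\item $(d_1,d_2)=(2,2)$ with the other classes singletons: the linear-form argument again puts the remaining rays on a line through the origin. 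This yields $\PP^1\times\PP^1\times\PP^1$ (already excluded), a $\PP^1$-bundle $\PP(\mathcal{O}(a,b)\oplus\mathcal{O})$ over $\PP^1\times\PP^1$, or a weighted analogue with a single extra ray.
\end{enumerate}

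\emph{Case (iv) and the converse.} In case (iv), $A_5$ must act on both $\PP^1$ factors, and the odd permutations must swap the two rulings. A swapping automorphism of $N$ preserving $\Delta$ forces $a=b$. Conversely, $S_5$ acts on the quadric $\PP^1\times\PP^1\subset\PP^3$ via the standard representation, preserving $\sum x_i^2$. There $A_5$ acts by the two non-conjugate icosahedral embeddings into $\operatorname{PGL}_2$, and odd permutations exchange the rulings. This action should lift to $\PP(\mathcal{O}(a,a)\oplus\mathcal{O})$ because $\mathcal{O}(a,a)$ is linearized up to the central ambiguity, and the projective bundle absorbs that ambiguity. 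The one-extra-ray variants are contractions of the bundles, and I expect them to be ruled out as non-simplicial or as coinciding with listed cases.

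The remaining case is $d_1=2$ with all other classes singletons. Here $A_5$ acts through a single $\operatorname{PGL}_2$, and the $u_j$ argument leaves only a plane for the remaining rays; I would show that the $\PP^1$-fibration this produces is preserved by every automorphism, so that $S_5$ again acts on $\PP^1$, which is impossible. I expect the main obstacle to be making this and case (iv) rigorous: equivariantly identifying the extension by the component group, and checking that the odd elements genuinely realize the factor swap.
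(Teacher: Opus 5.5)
\textbf{Verdict: the proposal has a genuine gap.} Case (iii) reaches a false conclusion that contradicts the statement, and case (iv) rests on an unjustified step.

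\textbf{What is fine.} Your reduction to $A_5\le\Autzero(X)$, via the order bound for finite subgroups of $\operatorname{GL}_3(\Z)$, is correct. So is your case $d_1=3$, via the $\Aut(X)$-equivariant map to $\PP^2$. Both are more self-contained than the paper, which cites Lemmas~4.5 and~4.6 of Esser--Ji--Moraga at these points.

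\textbf{Case (iii) is wrong.} The variety $(\PP^1)^3$ \emph{does} carry a faithful $S_5$-action. Let $S_5$ act on the first two factors through the quadric $\PP^1\times\PP^1\subset\PP^3$ (odd permutations exchange the rulings) and trivially on the third. Your argument breaks at its last step. The factor fixed by all of $S_5$ may be exactly one on which $A_5$ acts trivially. So ``$A_5$ acts nontrivially on at least one factor'' says nothing about the $S_5$-stable factor. Nor can you iterate, because the other two factors are exchanged by odd elements and carry no $S_5$-action. Since $(\PP^1)^3=\PP(\mathcal{O}(0,0)\oplus\mathcal{O})$ is the case $a=0$ of the statement, your proof contradicts both the statement and your own converse. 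The paper treats the partition $(2,2,2)$ by identifying it with this $a=0$ bundle and exhibiting the action above.

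\textbf{Case (iv): the step ``a block swap forces $a=b$'' fails.} It ignores swaps that exchange the two singleton rays $\pm v_5$. Take the fan with rays $v_1,\dots,v_6$ equal to $(1,0,a),(-1,0,0),(0,1,-a),(0,-1,0),(0,0,1),(0,0,-1)$, which is $\PP(\mathcal{O}(a,-a)\oplus\mathcal{O})$.
\begin{itemize}
\item The lattice map $(x,y,z)\mapsto(y,x,-z)$ preserves $\Delta$. On the Cox ring it acts by $x_1\leftrightarrow x_3$, $x_2\leftrightarrow x_4$, $x_5\leftrightarrow x_6$.
\item Let $2\cdot A_5$ act on $(x_1,x_2)$ by $\rho_1$, on $(x_3,x_4)$ by $\rho_1\circ\tilde\alpha$, and trivially on $x_5,x_6$. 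Here $\tilde\alpha$ is the involutive lift of conjugation by a transposition.
\item The central $-1$ lies in the quasitorus, because $[D_5]=[D_6]+a([D_3]-[D_1])$. So this gives an $A_5\le\Autzero(X)$.
\item The swap normalizes this $A_5$, acting through $\tilde\alpha$, and squares to $1$. Hence it generates a faithful $S_5$.
\end{itemize}
For $a\neq0$ these bundles are not isomorphic to any $\PP(\mathcal{O}(b,b)\oplus\mathcal{O})$, since their primitive relations differ. So this step cannot be repaired as you state it.

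\textbf{Relation to the paper's proof.} The paper's proof makes the same tacit assumption (``fixing the $z$-axis''). Its list therefore appears to be incomplete as well, and the statement as written seems to omit these bundles.

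\textbf{Open cases.} Your five-ray case and the partition $(2,1,\dots,1)$ are stated as expectations, not arguments. The paper excludes the five-ray case by noting that the swap exchanges the two possible diagonals of the quadrangular pyramid.
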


\begin{proof}
    Suppose that $S_5$ acts faithfully on a $3$-dimensional complete simplicial toric variety $X$. We consider the action on the Cox ring and the induced partition of the rays $\Delta(1) = \bigsqcup_{i=1}^s \Delta_i$, where $d_i = |\Delta_i|$ and we order them as $d_1 \ge d_2 \ge \dots \ge d_s$. 
    
    By \cite[Lemma 4.5]{esser2025symmetries}, the alternating group $A_5$ must be contained in the connected component, so $A_5 \le \operatorname{Aut}^0(X)$. Since $A_5$ does not admit any non-trivial $1$-dimensional projective representation, we must have $d_1 \ge 2$. We examine the cases based on the maximum block size $d_1$:

    \noindent \textbullet\ \textit{Case $d_1 = 4$:} The sum of the dimensions is $(4-1) = 3 = \dim X$. This implies there is only one block, uniquely identifying the variety as $X \cong \PP^3$.

    \noindent \textbullet\ \textit{Case $d_1 = 3$:} The symmetric group $S_5$ cannot be a subgroup of $\operatorname{Aut}^0(X)$ because the minimum degree for a faithful projective representation of $S_5$ is $4$ (i.e., $S_5 \le \operatorname{PGL}_4$). Thus, the $S_5$-action must induce an outer automorphism. By \cite[Lemma 4.6]{esser2025symmetries}, the partition must be $(d_1, d_2) = (3, 2)$. According to \cite[Lemma 4.2]{esser2025symmetries}, this corresponds to the product $X \cong \PP^2 \times \PP^1$. However, $S_5$ cannot act faithfully on $\PP^2 \times \PP^1$, which yields a contradiction.

    \noindent \textbullet\ \textit{Case $d_1 = 2$:} By \cite[Lemma 4.2]{esser2025symmetries}, the requirement of the $A_5$-action forces the presence of at least two blocks of size $2$, meaning $(d_1, d_2) = (2, 2)$. We then consider the remaining rays:
    \begin{itemize}
        \item If $d_3 = 2$, the partition is $(2, 2, 2)$, which corresponds to $X \cong \PP^1 \times \PP^1 \times \PP^1$. This is precisely the case of $\PP(\mathcal{O}(a,a) \oplus \mathcal{O})$ where $a = 0$. Since $S_5$ acts on $\PP^1 \times \PP^1$, this diagonal action naturally extends to the three-fold product.
        \item If $d_3 = 1$, following the exact same argument as in \cref{thm:main2}, the rays of the fan $\Delta$ are essentially given by $(1,0,a)$, $(-1,0,0)$, $(0,1,b)$, $(0,-1,0)$, $(0,0,1)$, and $(0,0,-1)$, or a simplicial subdivision obtained by omitting $(0,0,1)$. Specifically, the two blocks of size $2$ are exactly $\Delta_1 = \{ (1,0,a), (-1,0,0) \}$ and $\Delta_2 = \{ (0,1,b), (0,-1,0) \}$.

        Since $S_5 \not\le \operatorname{Aut}^0(X)$, the $S_5$-action must involve a $\mathbb{Z}/2\mathbb{Z}$-action on the lattice $N$ that swaps the blocks $\Delta_1$ and $\Delta_2$. To perform this swap while fixing the $z$-axis (i.e., the subspace containing $(0,0,\pm 1)$), the lattice automorphism must strictly exchange the first and second coordinates. Applying this coordinate exchange to the ray $(1,0,a)$ yields $(0,1,a)$. For this ray to belong to $\Delta_2$, it must coincide with $(0,1,b)$, which immediately forces $a=b$.

        Furthermore, we must consider the constraint imposed by the simplicial condition. The four rays in $\Delta_1 \cup \Delta_2$ generate a quadrangular pyramid in $N_\R$. If the central ray $(0,0,1)$ is omitted, this quadrangular cone must be subdivided into two triangular cones by choosing a diagonal to ensure the fan remains simplicial. There are exactly two possible choices for this diagonal. However, the coordinate-swapping automorphism acts as a reflection across the plane $x=y$, which exchanges these two diagonals. Consequently, the swap cannot preserve any such subdivision, and the $S_5$-action would fail to preserve the fan. Thus, the case without $(0,0,1)$ is completely excluded.
    \end{itemize}

    Conversely, for any variety of the form $\PP(\mathcal{O}(a,a) \oplus \mathcal{O}) \to \PP^1 \times \PP^1$, the faithful $S_5$-action on the base $\PP^1 \times \PP^1$ naturally lifts to the total space, preserving the symmetric bundle structure. This completes the classification.
\end{proof}

\begin{remark}
    The assumption that the toric variety $X$ is simplicial is strictly necessary for this classification. As observed in the proof, the $S_5$-action on the $5$-ray configuration fails to preserve the fan solely because the quadrangular pyramid must be subdivided into simplicial cones, which inevitably breaks the reflection symmetry. If we relax the simplicial condition and allow the quadrangular pyramid to remain as a single $3$-dimensional maximal cone, this symmetry-breaking does not occur. Consequently, if we permit non-simplicial varieties, there indeed exist complete toric $3$-folds defined by exactly $5$ rays that admit a faithful $S_5$-action.
\end{remark}

\begin{proposition} \label{prop:dim3}
    Let $k$ be a field satisfying the condition \hyperref[cond:star]{\rm{($\star$)}}. Let $X$ be a $3$-dimensional complete simplicial toric variety over $k$. If the symmetric group $S_m$ acts faithfully on $X$, then $m \le 5$. Furthermore, if $m = 5$, then $X \cong \PP^3_k$.
\end{proposition}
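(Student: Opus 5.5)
We argue in three steps: exclude $S_6$, show $S_5$ is realized by $\PP^3_k$, and pin down every variety carrying an $S_5$-action. Throughout we base change to $\overline{k}$ and use \cref{tab:complex_classification} and \cref{lemma:n3}.

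\textbf{Upper bound.} Suppose $S_m$ acts faithfully on $X$ with $m \ge 6$. Since $\overline{k}$ is algebraically closed of characteristic zero, $X_{\overline{k}}$ is a $3$-dimensional complete simplicial toric variety with a faithful $S_m$-action. \cref{tab:complex_classification} then forces $m = 6$ and $X_{\overline{k}} \cong \PP^3_{\overline{k}}$. Hence $X$ is a Severi--Brauer variety. It has a rational point (for instance the identity of the torus), so $X \cong \PP^3_k$ by \cite[Theorem 5.1.3]{GS06}.

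We would then need $S_6 \not\le \operatorname{PGL}_4(k)$. A faithful projective representation of $S_6$ of degree $4$ lifts to a faithful $4$-dimensional representation of a double cover $\widetilde{S}_6$, namely a basic spin representation, since $S_6$ has no faithful linear representation of degree $\le 4$ other than through such a cover. The plan is to show that the character field or Schur index of these spin representations forces either $\sqrt{-3} \in k$ or a solution of $a^2+b^2=-1$ in $k$. The paper has flagged that ($\star$) implies $\sqrt{-3} \notin k$, which suggests the intended route. Concretely, the $4$-dimensional spin representations of $2\cdot A_6$ have irrationalities such as $\sqrt{-3}$ in their characters, and on the $2 \cdot A_6 \cong \operatorname{SL}_2(9)$ part the quaternion algebra involved is split only under these conditions. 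Alternatively, we could restrict to a subgroup: the lift of $S_4 \subset S_6$ (or of $3$-cycles) acting on a $2$-dimensional summand would give an embedding $S_4 \hookrightarrow \operatorname{PGL}_2(k)$, contradicting ($\star$)(2). One must also rule out descending the action only up to a Brauer class. Since $X \cong \PP^3_k$, automorphisms are $\operatorname{PGL}_4(k)$, and the obstruction is whether the projective representation over $\overline{k}$ can be conjugated into $\operatorname{PGL}_4(k)$. The point to prove is that any such embedding gives a $4$-dimensional $k$-representation of some central extension, or a twisted form, whose character field contains $\sqrt{-3}$. \textbf{This arithmetic step is the main obstacle.} I would first try to compute the character values of the basic spin representation on elements of order $3$ and use that its character field is not real-contained in $k$.

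\textbf{Existence and uniqueness for $m=5$.} The standard representation of $S_5$ is $4$-dimensional and defined over $\Q$, so $S_5$ acts faithfully on $\PP^3_k$. For uniqueness, \cref{lemma:n3} gives $X_{\overline{k}} \cong \PP^3$ or $\PP(\mathcal{O}(a,a)\oplus\mathcal{O}) \to \PP^1\times\PP^1$. In the first case we conclude as above via Severi--Brauer. In the second case, the proof of \cref{lemma:n3} shows that $A_5 \le \Autzero$ acts through the two blocks of size $2$, giving $A_5 \hookrightarrow \operatorname{PGL}_2 \times \operatorname{PGL}_2$ on the $\PP^1 \times \PP^1$ factors. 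The outer involution swaps them, and the Galois action must preserve this block structure. The aim is to show that descent of $A_5 \subset \operatorname{PGL}_2$ then forces either $A_4 \subset S_4 \hookrightarrow \operatorname{PGL}_2$ over $k$ or over $k(\sqrt5)$. This is plausibly where ($\star$)(3) enters, paralleling the Weil restriction example: the rulings are defined over $k$ or over a quadratic extension, and $A_5 \le \operatorname{PGL}_2(L)$ requires $\sqrt5 \in L$ and that $-1$ be a sum of two squares in $L$. Condition ($\star$)(2),(3) excludes exactly this, which gives $X \cong \PP^3_k$.
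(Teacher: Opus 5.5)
\textbf{The $S_6$ bound.} Your reductions (base change plus the complex table, Severi--Brauer with a rational point, and $S_5$ on $\PP^3_k$ via the standard representation) agree with the paper. However, both places where $(\star)$ must actually be used are left open, and the routes you sketch do not close them.

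For $S_6\not\le\operatorname{PGL}_4(k)$, which you flag as the main obstacle, your concrete plan gives nothing:
\begin{itemize}
\item The faithful $4$-dimensional characters of $2\cdot A_6$ are rational. On the order-$3$ lifts of $3$-cycles and double $3$-cycles they take the values $1$ and $-2$, so no $\sqrt{-3}$ appears on elements of order $3$.
\item Restricting to $\widetilde S_4$ splits the representation over $\overline{k}$ into two Galois-conjugate basic spin representations with character field $\Q(\sqrt{2})$ or $\Q(\sqrt{-2})$. This only yields $S_4\le\operatorname{PGL}_2(k(\sqrt{\pm 2}))$, which $(\star)$ does not forbid: for $k=\Q$ one has $-1=(\sqrt{-2})^2+1^2$.
\end{itemize}
The paper instead lifts to $\widetilde S_6\to\operatorname{GL}_4(k)$ and evaluates the character at a lift of a $6$-cycle, an \emph{odd} element, where it equals $\pm\sqrt{-3}$. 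Then $\sqrt{-3}\in k$, and $\bigl(\tfrac{1+\sqrt{-3}}{2}\bigr)^2+\bigl(\tfrac{1-\sqrt{-3}}{2}\bigr)^2=-1$ contradicts $(\star)$.

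Your subgroup idea does work with the right subgroup. It also avoids asking which central extension of $S_6$ by $k^\times$ the lift lands in: note $H^2(S_6,k^\times)\supset k^\times/k^{\times 2}$, and for the other double cover the $6$-cycle value is $\pm\sqrt3$. The argument:
\begin{itemize}
\item Since $\mu_3\not\subset k$, we have $H^2(A_6,k^\times)=\mu_6(k)=\{\pm1\}$. As $A_6$ has no nontrivial representation of degree $\le 4$, the subgroup $A_6\le\operatorname{PGL}_4(k)$ lifts to $2\cdot A_6\to\operatorname{GL}_4(k)$.
\item For one of the two classes of $A_5\subset A_6$, the restriction to its preimage $2\cdot A_5$ is the sum of the two $2$-dimensional characters with values in $\Q(\sqrt5)$.
\item Hence $A_5\le\operatorname{PGL}_2(k)$ or $A_5\le\operatorname{PGL}_2(k(\sqrt5))$, and $A_4\subset A_5$ contradicts $(\star)$(2) or (3).
\end{itemize}

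\textbf{Uniqueness for $m=5$.} Your claim that $(\star)$(2),(3) ``excludes exactly'' $A_5\le\operatorname{PGL}_2(L)$ for the quadratic splitting field $L$ is false when $\sqrt5\in k$. Then (3) is vacuous, and for instance $k=\R$, $L=\C$ gives $A_5\le\operatorname{PGL}_2(\C)$. The missing idea, which the paper uses, runs as follows:
\begin{itemize}
\item Since $S_5\not\le\operatorname{PGL}_2(L)$, odd permutations act as $g\tau$, with $\tau$ the nontrivial element of $\operatorname{Gal}(L/k)$.
\item Conjugation by $g\tau$ must induce the outer automorphism of $A_5$, i.e.\ swap the two classes of elements of order $5$.
\item These classes are distinguished by $\operatorname{tr}^2/\det=(3\pm\sqrt5)/2$, so $\tau(\sqrt5)=-\sqrt5$. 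This forces $\sqrt5\notin k$ and $L=k(\sqrt5)$, which is precisely the case handled by (3).
\end{itemize}
Moreover, for $a=0$ you have $X_{\overline{k}}\cong(\PP^1)^3$, and the Galois group can permute the three factors transitively, e.g.\ $\operatorname{Res}_{F/k}\PP^1_F$ with $[F:k]=3$. So ``the rulings are defined over $k$ or over a quadratic extension'' misses cases. The paper disposes of them by base changing to a field $k'$ over which the splitting field is cyclic cubic. Since $S_5$ has no $\Z/3\Z$ quotient, it would then have to embed in $\operatorname{PGL}_2$ of that field, which is impossible.
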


\begin{proof}
    First, we show that $S_6$ cannot act faithfully on any $3$-dimensional complete simplicial toric variety $X$ over $k$. Suppose for contradiction that $S_6 \le \Aut_k(X)$. By the same base change argument as in \cref{prop:dim1_and_5}, $X_{\overline{k}} \cong \PP^3_{\overline{k}}$, and the action induces an embedding $S_6 \hookrightarrow \operatorname{PGL}_4(k)$. This projective representation lifts to a linear representation of the double cover $\Gamma = \tilde{S}_6$ into $\operatorname{GL}_4(k)$. Over $\C$, this corresponds to a $4$-dimensional basic spin representation of $\tilde{S}_6$. However, from the theory of shifted tableaux \cite[Theorem 3.3]{MR991411}, the character $\varphi$ of this spin representation evaluated on a lift of a $6$-cycle $\sigma$ yields $\varphi(\sigma) = \pm\sqrt{-3}$. For this representation to be realized over the base field $k$, its character values must lie in $k$, which requires $\sqrt{-3} \in k$.
    
    Remarkably, the condition \hyperref[cond:star]{\rm{($\star$)}} strictly prevents this. If $\sqrt{-3} \in k$, then one can explicitly write $-1$ as a sum of two squares in $k$:
    \[
        \left( \frac{1+\sqrt{-3}}{2} \right)^2 + \left( \frac{1-\sqrt{-3}}{2} \right)^2 = \frac{-2+2\sqrt{-3}}{4} + \frac{-2-2\sqrt{-3}}{4} = -1.
    \]
    Since the condition \hyperref[cond:star]{\rm{($\star$)}} assumes that $a^2 + b^2 = -1$ has no solution in $k$, it follows that $\sqrt{-3} \notin k$. Thus, this $4$-dimensional representation cannot be realized over $k$, which contradicts the embedding $\tilde{S}_6 \hookrightarrow \operatorname{GL}_4(k)$. Hence, $m < 6$.
    
    Next, the symmetric group $S_5$ naturally acts on $\PP^3_k$ via the standard $4$-dimensional representation (which is defined over $\Q$ and thus over $k$). Therefore, the maximum degree is exactly $m = 5$.

    We now classify the varieties realizing this maximum degree. Suppose that $S_5$ acts faithfully on $X$ over $k$, which means $S_5 \le \operatorname{Aut}_k(X)$. By \cref{lemma:n3}, if we temporarily base change to the algebraic closure $\overline{k}$, the geometric candidates for $X_{\overline{k}}$ are restricted to $X_{\overline{k}} \cong \mathbb{P}^3_{\overline{k}}$ or $X_{\overline{k}} \cong \mathbb{P}(\mathcal{O}(a,a) \oplus \mathcal{O}) \to \PP^1_{\overline{k}} \times \PP^1_{\overline{k}}$. 

    If $X_{\overline{k}} \cong \mathbb{P}^3_{\overline{k}}$, then $X$ is a Severi-Brauer variety of dimension $3$. By \cite[Theorem 5.1.3]{GS06}, we conclude that $X \cong \mathbb{P}^3_k$. 

    In the following, we consider the case where $X_{\overline{k}} \cong \mathbb{P}(\mathcal{O}(a,a) \oplus \mathcal{O})$. We divide the analysis into the split and non-split cases.

    \noindent \textit{The Split Case.} 
    Suppose that $X$ is a split toric variety, meaning $X \cong \mathbb{P}(\mathcal{O}(a,a) \oplus \mathcal{O})$ over $k$. In this case, the Weil divisor class group $\operatorname{Cl}(X)$ is a torsion-free finitely generated abelian group, i.e., $\operatorname{Cl}(X) \cong \mathbb{Z}^r$. 

    The absence of torsion in the class group implies that the characteristic group $G = \operatorname{Hom}(\operatorname{Cl}(X), \mathbb{G}_m)$ is a split torus $\mathbb{G}_{m,k}^r$ over the base field $k$. By Hilbert's Theorem 90 for split tori (cf. \cite[Chapter II, Proposition 1]{serre1997galois}), the first Galois cohomology vanishes: $H^1(k, G_{\overline{k}}) = 0$.

    The toric variety $X$ is constructed as a geometric quotient of the spectrum of its Cox ring $R$ by the action of $G$. The exact sequence of algebraic group schemes $1 \to G_{\overline{k}} \to \operatorname{Aut}_g(R)_{\overline{k}} \to \operatorname{Aut}^0(X_{\overline{k}}) \to 1$ induces a long exact sequence of $k$-rational points. Since $H^1(k, G_{\overline{k}}) = 0$, the projection is surjective:
    $$ \operatorname{Aut}_g(R) \twoheadrightarrow \operatorname{Aut}^0_k(X). $$
    Thus, any $k$-automorphism of $X$ lifts to a graded $k$-algebra automorphism of the Cox ring $R$. The subgroup $A_5 \le \operatorname{Aut}^0_k(X)$ therefore lifts to a central extension $\Gamma \le \operatorname{Aut}_g(R)$.

    We have already established that the Cox ring variables for this specific geometric structure correspond to the block sizes $(d_1, d_2, d_3, d_4) = (2, 2, 1, 1)$ or $(d_1,d_2,d_3) = (2,2,2)$. The graded automorphism group $\operatorname{Aut}_g(R)$ decomposes as a semi-direct product of its unipotent radical $R_u(k)$ and the linear reductive part $\prod_i \operatorname{GL}_{d_i}(k)$.

    We consider the natural projection $p \colon \operatorname{Aut}_g(R) \twoheadrightarrow \prod_i \operatorname{GL}_{d_i}(k)$. The kernel of the restriction $p|_{\Gamma}$ is $\Gamma \cap R_u(k)$. Since $k$ is of characteristic zero, every non-trivial element of $R_u(k)$ has infinite order, whereas $\Gamma$ is finite. Thus, the intersection is trivial ($\Gamma \cap R_u(k) = \{1\}$), inducing a faithful embedding:
    $$ \Gamma \hookrightarrow \prod_i \operatorname{GL}_{d_i}(k)$$
    Projecting $\Gamma$ onto any $\operatorname{GL}_2(k)$ block would induce an embedding of $A_5$ into $\operatorname{PGL}_2(k)$. However, by our condition \hyperref[cond:star]{\rm{($\star$)}}, $A_5$ cannot embed into $\operatorname{PGL}_2(k)$. This yields a contradiction, ruling out the split case.

    \noindent \textit{The Non-split Case.} 
    Suppose $X$ is a non-split $k$-form of $\mathbb{P}(\mathcal{O}(a,a) \oplus \mathcal{O})$. The Galois group $\operatorname{Gal}(\overline{k}/k)$ acts non-trivially on the rays of the fan. The symmetry group of the rays of the base $\mathbb{P}^1 \times \mathbb{P}^1$ is the dihedral group $D_8$. Swapping rays within the same block yields a $k$-form that is isomorphic up to a torus action. Therefore, if $a \neq 0$, we only need to consider the action of a quadratic extension $L/k$ that specifically swaps the two blocks (i.e., the two $\mathbb{P}^1$ factors).

    This action corresponds to a Weil restriction, giving the $k$-form $X \cong \mathbb{P}(\mathcal{O}_B \oplus \mathcal{O}_B(a,a))$ where $B = \operatorname{Res}_{L/k}(\mathbb{P}^1_L)$. As discussed in \cref{subsec:weil_res}, the automorphism group of this Weil restriction contains the following semi-direct product:
    $$ \operatorname{Aut}_k(X) \supset \operatorname{PGL}_2(L) \rtimes \operatorname{Gal}(L/k). $$
    For $S_5$ to act faithfully on $X$, we must have an embedding $A_5 \le \operatorname{PGL}_2(L)$ and the structural relation $A_5 \rtimes \operatorname{Gal}(L/k) \cong S_5$.

    If such a representation of $A_5$ into $\operatorname{PGL}_2(L)$ exists, the characters of the two non-conjugate elements of order $5$ necessarily involve $\sqrt{5}$. For the Galois action to induce the outer automorphism that extends $A_5$ to $S_5$, the non-trivial element of $\operatorname{Gal}(L/k)$ must swap these characters, which requires mapping $\sqrt{5}$ to $-\sqrt{5}$. This forces the quadratic extension to be $L = k(\sqrt{5})$. 

    This exhausts all possibilities and completely rules out the non-split case for $a \neq 0$.

    However, by the condition \hyperref[cond:star]{\rm{($\star$)}}, $a^2 + b^2 = -1$ has no solutions in $k(\sqrt{5})$. Consequently, $A_5$ cannot be embedded into $\operatorname{PGL}_2(k(\sqrt{5}))$. This contradiction completely rules out the non-split and $a\neq 0$ case as well.

    If $a=0$, then $X_{\overline{k}} \cong \mathbb{P}^1_{\overline{k}} \times \mathbb{P}^1_{\overline{k}} \times \mathbb{P}^1_{\overline{k}}$. In this case, all three blocks have size $2$, meaning the geometric symmetry includes $S_3$ permuting the three $\mathbb{P}^1$ factors. The Galois action on these factors determines the $k$-form. There are two possibilities for the orbit partition of the Galois action:
    \begin{itemize}
        \item The orbit partition is $2 + 1$. In this case, the splitting field for the permuted factors is a quadratic extension $L/k$. The $k$-form is given by $X \cong \operatorname{Res}_{L/k}(\mathbb{P}^1_L) \times \mathbb{P}^1_k$, and the continuous part of its automorphism group is $\operatorname{PGL}_2(L) \times \operatorname{PGL}_2(k)$. This essentially mirrors the argument for the $a \neq 0$ case: realizing the $S_5$-action would require an embedding $A_5 \le \operatorname{PGL}_2(L)$ such that the Galois conjugation induces an outer automorphism. This forces $L = k(\sqrt{5})$, but the condition \hyperref[cond:star]{\rm ($\star$)} prevents $A_5$ from embedding into $\operatorname{PGL}_2(k(\sqrt{5}))$, yielding the exact same contradiction.
        
        \item The orbit partition is $3$. This means the Galois group contain an element of order three. Let $L$ be the splitting field of $X$ over $k$, so that $X_L \cong \mathbb{P}^1_L \times \mathbb{P}^1_L \times \mathbb{P}^1_L$. The image of the Galois group $\operatorname{Gal}(L/k)$ in $S_3$ must contain a cyclic subgroup of order $3$. Let $k'$ be the intermediate field corresponding to this subgroup, so that $L/k'$ is a cyclic Galois extension of degree $3$ with $\operatorname{Gal}(L/k') \cong \mathbb{Z}/3\mathbb{Z}$.
        
        Over the field $k'$, the Galois group cyclically permutes the three factors, meaning the $k'$-form is exactly the Weil restriction $X_{k'} \cong \operatorname{Res}_{L/k'}(\mathbb{P}^1_L)$. The automorphism group over $k'$ is:
        \[
            \operatorname{Aut}_{k'}(X_{k'}) \cong \operatorname{PGL}_2(L) \rtimes \operatorname{Gal}(L/k') \cong \operatorname{PGL}_2(L) \rtimes \mathbb{Z}/3\mathbb{Z}. 
        \]
        Since $X$ admits a faithful $S_5$-action over $k$, this action naturally restricts to a faithful $S_5$-action on $X_{k'}$ over $k'$. Thus, we must have an embedding:
        \[
            S_5 \hookrightarrow \operatorname{PGL}_2(L) \rtimes \mathbb{Z}/3\mathbb{Z}. 
        \]
        Consider the projection map from this semi-direct product to $\mathbb{Z}/3\mathbb{Z}$. Since the only normal subgroups of $S_5$ are $\{1\}$, $A_5$, and $S_5$, the image of $S_5$ under any homomorphism to $\mathbb{Z}/3\mathbb{Z}$ must be trivial. This implies that the entire group $S_5$ must embed into the kernel, namely $\operatorname{PGL}_2(L)$. However, the maximal symmetric group that can be embedded into $\operatorname{PGL}_2$ over any field of characteristic zero is $S_4$. Therefore, the embedding $S_5 \le \operatorname{PGL}_2(L)$ is impossible. This yields a direct contradiction.
    \end{itemize}

    Therefore, we conclude that $\mathbb{P}^3_k$ is the unique $3$-dimensional complete simplicial toric variety admitting a faithful $S_5$-action over $k$.
\end{proof}

We briefly extend our argument to dimension $4$. The strategy is entirely analogous to the $n=3$ case, exploiting the geometric and representation-theoretic rigidities over the base field $k$. According to the complex classification, the maximum degree of a symmetric group acting on a $4$-dimensional complete simplicial toric variety is $6$, realized by $\PP^4$. We show that this remains true and is the unique maximal case over $k$.

\begin{lemma} \label{lemma:trivial_forms}
Let $X$ be a $k$-form for $\mathbb{P}^4$, $\mathbb{P}(\mathcal{O}(2a) \oplus \mathcal{O})$, or $\mathbb{P}(1,1,1,1,2m)$. Then $X$ is isomorphic to a split toric variety over $k$.
\end{lemma}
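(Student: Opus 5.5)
The plan is to use the description of $k$-forms recalled in \cref{sec:preliminaries}. A toric $k$-form $X$ of a split toric variety with fan $\Delta$ is given by a finite Galois extension $L/k$ together with an action of $\operatorname{Gal}(L/k)$ on $N$ through $\Aut(N,\Delta)$. The key observation is that $X$ always has a $k$-rational point, namely the identity of the open torus $T\subset X$. So it suffices to show that every such Galois twist produces a variety built from Severi--Brauer pieces and line bundles that are forced to be trivial or split. I read ``isomorphic to a split toric variety'' as isomorphism of $k$-varieties; the torus itself may remain non-split.

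\textbf{Case $\PP^4$.} Here $X_{\overline{k}}\cong\PP^4_{\overline{k}}$, so $X$ is a Severi--Brauer variety. Since $X(k)\neq\emptyset$, \cite[Theorem 5.1.3]{GS06} gives $X\cong\PP^4_k$, exactly as in \cref{prop:dim1_and_5}.

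\textbf{Fan symmetries in the other cases.} First I compute $\Aut(N,\Delta)$ from the explicit fans found in the proof of \cref{thm:main2}.
\begin{itemize}
\item For $\PP(1,1,1,1,2m)$ with $2m>1$, the ray $v_5$ is the unique ray whose class differs from the others. Hence $\Aut(N,\Delta)\cong S_4$, permuting $v_1,\dots,v_4$ and fixing $v_5$.
\item For $\PP(\mathcal{O}(2a)\oplus\mathcal{O})$ with $a\neq 0$, the relation $v_1+\dots+v_4=2a\,v_5$ singles out $v_5$. Hence every fan automorphism fixes $v_5$ and $v_6=-v_5$ and permutes $v_1,\dots,v_4$.
\item For $a=0$, the variety is $\PP^3\times\PP^1$ and $\Aut(N,\Delta)=S_4\times S_2$, preserving the product decomposition of $N$.
\end{itemize}
In every case the Galois action fixes the sublattice $\Z v_5$ and respects the quotient $N\to N/\Z v_5$. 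Projecting the fan to $N/\Z v_5$ (in the bundle case) gives the fan of $\PP^3$. Therefore the toric projection $X_L\to\PP^3_L$ is Galois-equivariant and descends to a $k$-morphism $\pi\colon X\to Y$, where $Y$ is a $k$-form of $\PP^3$. The image of the identity of the torus is a $k$-point of $Y$, so $Y\cong\PP^3_k$ by \cite[Theorem 5.1.3]{GS06}.

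\textbf{Bundle case, $a\neq0$.} The divisors $D_5$ and $D_6$ are Galois-invariant, because their rays are fixed. So they are defined over $k$ and give two disjoint sections of the $\PP^1$-bundle $\pi$. A $\PP^1$-bundle over $\PP^3_k$ with two disjoint sections is $\PP(\mathcal{L}_1\oplus\mathcal{L}_2)$ for line bundles $\mathcal{L}_i$ on $\PP^3_k$. Using $\Pic(\PP^3_k)=\Z$ and comparing with the geometric picture, we get $X\cong\PP_{\PP^3_k}(\mathcal{O}(2a)\oplus\mathcal{O})$, which is split.

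\textbf{Case $a=0$.} If the Galois action also swaps $v_5$ and $v_6$, then $X$ is a product of a $k$-form of $\PP^3$ and a $k$-form of $\PP^1$. Each factor has a $k$-point coming from the torus, so $X\cong\PP^3_k\times\PP^1_k$.

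\textbf{Weighted case.} The maximal cone spanned by $v_1,\dots,v_4$ is Galois-stable, so the corresponding torus-fixed point $p$ is a $k$-rational point; it is the unique singular point. The stellar subdivision of this cone inserting $-v_5$ is Galois-equivariant. It gives a $k$-form $\widetilde X=\operatorname{Bl}_pX$ of $\PP(\mathcal{O}(2m)\oplus\mathcal{O})$, which is split by the bundle case. Then $X$ is recovered over $k$ as the contraction of the negative section of $\widetilde X$, equivalently as $\Proj$ of the section ring of the $k$-rational divisor $D_5$. This identifies $X$ with the split $\PP(1,1,1,1,2m)$.

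\textbf{Main obstacle.} I expect the delicate step to be this last descent: checking that the contraction and the $\Proj$ description are compatible with the twisted Galois structure. The cleanest route is to show that the graded ring $\bigoplus_j H^0(X,\mathcal{O}(jD_5))$ over $k$ is generated by four degree-one sections and one degree-$2m$ section. Equivalently, the degree-one piece is a $4$-dimensional $k$-vector space, by Galois descent for vector spaces, so a $k$-basis can be chosen.
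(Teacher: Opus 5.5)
Your argument is correct, but it follows a different route from the paper. I read ``isomorphic to a split toric variety'' the same way you do, and so does the paper (``up to the torus action'').

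\textbf{Comparison of the two routes.} The paper handles the bundle and weighted cases in one stroke. It notes that the Galois action must preserve the linear relations among the rays, so it only permutes rays inside linear-equivalence blocks. It then asserts that such a twist is ``a change of variables in the Cox ring'', hence split. Made rigorous, this means lifting the Galois cocycle to permutation matrices in $\prod_i \operatorname{GL}_{d_i}$ inside the graded automorphism group of the Cox ring and trivializing it by Hilbert 90. The paper states this step but does not prove it.

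You avoid the Cox ring entirely:
\begin{itemize}
\item You descend the toric $\PP^1$-fibration to a toric form of $\PP^3$, which is $\PP^3_k$ thanks to the identity point of the torus.
\item You split the bundle using the two $k$-rational disjoint sections $D_5$ and $D_6$.
\item You treat $a=0$, where $v_5$ and $v_6$ may be swapped, as a product of forms with rational points.
\item You reduce the weighted case to the bundle case by the Galois-equivariant blow-up of the $k$-rational singular point.
\end{itemize}
The paper's route is shorter and applies uniformly to any fan whose Galois symmetries are intra-block. Yours makes every descent step explicit. One implicit step you should state: $\pi$ is a Zariski $\PP^1$-bundle over $k$. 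This follows from the existence of the section $D_5$, for instance $X\cong\PP(\pi_*\mathcal{O}_X(D_5))$ by cohomology and base change.

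\textbf{The step you flag as the main obstacle.} It is not an obstacle, and the fix you propose is miscomputed. On $\PP(1,1,1,1,2m)$ one has $\mathcal{O}_X(D_5)\cong\mathcal{O}(2m)$, so $H^0(X,\mathcal{O}_X(D_5))$ is far from $4$-dimensional. The ring $\bigoplus_j H^0(X,\mathcal{O}_X(jD_5))$ is the $2m$-th Veronese subring, not a ring generated by four elements of degree one and one of degree $2m$. The four weight-one sections live in the class of $D_1$, and that class is Galois-invariant only as a class, since $D_1,\dots,D_4$ are permuted.

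What you already wrote suffices:
\begin{itemize}
\item $D_5$ is an ample $k$-rational Cartier divisor on $X$ with $p\notin D_5$.
\item Hence, for the contraction $f\colon\widetilde X\to X$, one has $f^*D_5=D_5$ and $H^0(X,\mathcal{O}_X(jD_5))=H^0(\widetilde X,\mathcal{O}_{\widetilde X}(jD_5))$ for all $j$.
\item The $k$-isomorphism $\widetilde X\cong\PP_{\PP^3_k}(\mathcal{O}(2m)\oplus\mathcal{O})$ you built from the sections $D_5,D_6$ carries $D_5$ to the positive section, because normal bundles are preserved.
\item Taking $\Proj$ of the section rings therefore identifies $X$ with the split $\PP(1,1,1,1,2m)_k$.
\end{itemize}
No further Galois compatibility check is needed, since everything here already takes place over $k$. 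Alternatively, a contraction of $D_6$ to a point onto a normal variety is unique, by the rigidity lemma and Zariski's main theorem.
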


\begin{proof}
For $\mathbb{P}^4$, any $k$-form is a Severi-Brauer variety of dimension $4$. Since $X$ is a toric variety, it contains a $k$-rational point (the identity element of the algebraic torus). By \cite[Theorem 5.1.3]{GS06}, the existence of a rational point implies that $X$ splits completely, giving $X \cong \mathbb{P}^4_k$.

For $\mathbb{P}(\mathcal{O}(2a) \oplus \mathcal{O})$ and $\mathbb{P}(1,1,1,1,2m)$, we consider the action of the Galois group $\operatorname{Gal}(\overline{k}/k)$ on the rays of their fans. For $\mathbb{P}(\mathcal{O}(2a) \oplus \mathcal{O})$, the ray generators satisfy the relations $v_1 + v_2 + v_3 + v_4 + 2av_5 = 0$ and $v_5 + v_6 = 0$. Because the Galois action must preserve these linear relations, it cannot map rays with different weights to each other. Therefore, the Galois group strictly preserves the structural blocks of the rays, meaning it can only permute rays within the same block. 

The same logic applies to $\mathbb{P}(1,1,1,1,2m)$, where the relation is $v_1 + v_2 + v_3 + v_4 + 2mv_5 = 0$ and the ray $v_5$ is completely fixed. A $k$-form arising solely from intra-block permutations is geometrically equivalent to a change of variables within the homogeneous coordinates of the Cox ring. Up to the torus action, such forms are naturally isomorphic to the split toric variety over $k$.
\end{proof}

\begin{proposition} \label{prop:dim4}
    Let $k$ be a field satisfying the condition \hyperref[cond:star]{\rm{($\star$)}}. Let $X$ be a $4$-dimensional complete simplicial toric variety over $k$. If the symmetric group $S_m$ acts faithfully on $X$, then $m \le 6$. Furthermore, if $m = 6$, then $X \cong \PP^4_k$.
\end{proposition}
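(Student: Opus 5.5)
The plan mirrors the proof of \cref{prop:dim3}: establish the upper bound and the lower bound, then classify the extremal case via base change to $\overline{k}$ and descent.

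\emph{Upper bound and realization.} Base change $X$ to $\overline{k}$. The $S_m$-action on $X$ induces a faithful action on $X_{\overline{k}}$, which is a $4$-dimensional complete simplicial toric variety over an algebraically closed field of characteristic zero. By \cref{tab:complex_classification}, this gives $m \le 6$. For the lower bound, the standard $5$-dimensional representation of $S_6$ is defined over $\Q$. Its projectivization gives a faithful $S_6$-action on $\PP^4_k$, so $m = 6$ is attained.

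\emph{Geometric candidates.} Suppose $S_6 \le \Aut_k(X)$. Then $S_6$ acts faithfully on $X_{\overline{k}}$, and by \cref{tab:complex_classification} we get four possibilities:
\begin{itemize}
  \item $X_{\overline{k}} \cong \PP^4$;
  \item $X_{\overline{k}} \cong \PP^2\times\PP^2$;
  \item $X_{\overline{k}} \cong \PP(\mathcal{O}(2a)\oplus\mathcal{O})$ over $\PP^3$;
  \item $X_{\overline{k}} \cong \PP(1,1,1,1,2m)$.
\end{itemize}
The candidate $\PP^2\times\PP^2$ is handled as in \cite{esser2025symmetries}. Here $S_6 \not\le \Autzero$, since $S_6$ does not embed in $\operatorname{PGL}_3\times\operatorname{PGL}_3$. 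So $A_6$ must act inside $\Autzero$ and embed into $\operatorname{PGL}_3(\,\cdot\,)$ of a field of definition of a factor. I would rule this out arithmetically: the $3$-dimensional projective representations of $A_6$ (via the Valentiner group $3.A_6$) have character values involving $\sqrt{-3}$ and $\sqrt{5}$. Just as in \cref{prop:dim3}, I would use that ($\star$) forbids $\sqrt{-3}$ in $k$, and, via condition (3), in $k(\sqrt5)$ as well. If $X$ is a non-split form swapping the two factors, the relevant field is a quadratic extension $L$, and the Galois-swap argument of \cref{prop:dim3} again forces $L = k(\sqrt5)$, which is excluded by ($\star$)(3). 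For the remaining three candidates, \cref{lemma:trivial_forms} shows that $X$ is split over $k$, and for $\PP^4$ this already gives $X\cong\PP^4_k$.

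\emph{Excluding the split bundle and weighted cases.} Let $X$ be the split $\PP(\mathcal{O}(2a)\oplus\mathcal{O})$ or $\PP(1,1,1,1,2m)$. In both cases $\Cl(X)$ is torsion free, so the Hilbert 90 argument from \cref{prop:dim3} lifts $S_6\le\Autzero_k(X)$ to a finite central extension $\Gamma\le\Aut_g(R)$. The block sizes are $(4,1,1)$ or $(4,1)$. By \cref{lem:d1_geq_4} and the proof of \cref{thm:main2}, $S_6$ lies in $\Autzero$. Since unipotent elements have infinite order in characteristic zero, $\Gamma$ embeds into $\operatorname{GL}_4(k)\times(k^*)^r$. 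The projection to $\operatorname{GL}_4(k)$ then gives a faithful projective representation of $S_6$ of degree $4$ over $k$, i.e.\ $\tilde S_6\hookrightarrow\operatorname{GL}_4(k)$, as in \cref{lem:wp}. The character of a lift of a $6$-cycle is $\pm\sqrt{-3}$, and ($\star$) forbids $\sqrt{-3}\in k$, exactly as in \cref{prop:dim3}. This is a contradiction, so $X\cong\PP^4_k$.

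I expect the main obstacle to be the $\PP^2\times\PP^2$ case, in particular verifying that no $k$-form, split or non-split, carries an $A_6$- or $S_6$-action. The fallback is the character-field argument above for $3.A_6$, whose degree-$3$ characters generate $\Q(\sqrt{-3},\sqrt5)$-type fields.
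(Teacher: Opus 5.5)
\textbf{Verdict: the overall structure is right, but the exclusion of the split bundle and weighted cases has a genuine gap.}

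Your structure follows the paper's proof: base change for $m\le 6$, the four candidates from \cref{tab:complex_classification}, the Valentiner/$k(\sqrt5)$ argument for $\PP^2\times\PP^2$, \cref{lemma:trivial_forms} for the other candidates, and Hilbert 90 on the Cox ring for the split case. The $\PP^2\times\PP^2$ part is sound once you phrase it via perfectness. $A_6$ lands in the image of $\operatorname{SL}_3(k)$, the kernel $\mu_3(k)$ of the map to $\operatorname{PGL}_3(k)$ is trivial because $\sqrt{-3}\notin k$, and $A_6$ has no faithful $3$-dimensional linear representation.

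\textbf{Where it breaks: the split $\PP(\mathcal{O}(2a)\oplus\mathcal{O})$ and $\PP(1,1,1,1,2m)$.}

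First, Hilbert 90 only shows that the preimage of $S_6$ in $\Aut_g(R)(k)$ is an extension of $S_6$ by $G(k)\cong (k^*)^r$. Nothing makes it finite.

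Second, and decisively, a faithful $S_6\to\operatorname{PGL}_4(k)$ does not give $\tilde S_6\hookrightarrow\operatorname{GL}_4(k)$. A lift $\hat s\in\operatorname{GL}_4(k)$ of a transposition satisfies $\hat s^2=c\cdot I$, with $c$ only determined modulo $k^{*2}$. Write $\hat s=\mu\sigma$ with $\sigma$ from the genuine spin representation. Then $k$-rationality of traces only forces $\mu\sqrt{-3}\in k$, which is a condition on the class of $c$, not $\sqrt{-3}\in k$.

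This is not a technicality. Take $k=\Q(\sqrt{-7})$:
\begin{itemize}
\item it satisfies ($\star$)(1),(2), so $\sqrt{-3}\notin k$;
\item the $4$-dimensional irreducibles of $2.A_6\cong\operatorname{SL}_2(9)$ have rational characters with Schur algebra $(-1,-3)_\Q$;
\item $k$ splits this algebra, since $-1=(\sqrt{-7}/2)^2+3(1/2)^2$;
\item these representations are stable under the transposition automorphism of $A_6$, so $S_6\subset\operatorname{PGL}_4(k)$;
\item hence $S_6$ acts faithfully on $\PP^3_k\times\PP^1_k$ and on $\PP(1,1,1,1,2)_k$.
\end{itemize}
So any argument that uses only $\sqrt{-3}\notin k$ proves something false, and condition ($\star$)(3) has to enter. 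The same lifting claim appears in the proof of \cref{prop:dim3}, so appealing to it does not help. The paper's own proof here just asserts that a block-size requirement excludes the split candidates.

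\textbf{How to repair it: work with $A_6$ instead of $S_6$.}

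Its image in $\operatorname{PGL}_4(k)$ is faithful. The remaining blocks have size at most $2$, and $A_6$ embeds neither in a solvable group nor in $\operatorname{PGL}_2$.

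Since $A_6$ is perfect, this image lies in the image of $\operatorname{SL}_4(k)$. Its preimage there is a central extension by $\mu_4(k)=\{\pm1\}$, because $\sqrt{-1}\notin k$. As $A_6$ has no faithful $4$-dimensional linear representation, you get $2.A_6\subset\operatorname{SL}_4(k)$ acting by one of its $4$-dimensional irreducibles.

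Restrict to a suitable binary icosahedral $2.A_5$: the preimage of a point stabilizer, or of its image under the exceptional automorphism. There the representation becomes $W\oplus W'$, the two Galois-conjugate $2$-dimensional representations with Schur algebra $(-1,-1)_{\Q(\sqrt5)}$. Realizing this over $k$ forces $-1$ to be a sum of two squares in $k(\sqrt5)$, or in $k$ if $\sqrt5\in k$. Conditions ($\star$)(2),(3) forbid exactly this. Hence $S_6\not\hookrightarrow\operatorname{PGL}_4(k)$, which closes the split case.
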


\begin{proof}
    The symmetric group $S_6$ naturally acts on $\PP^4_k$ via the standard $5$-dimensional representation (defined over $\Q$). Thus, the maximum degree is at least $6$. If $S_7 \le \Aut_k(X)$, the base change to $\overline{k}$ would force $S_7 \le \Aut(X_{\overline{k}})$, but $S_7$ does not admit any faithful action over an algebraically closed field of characteristic zero, ruling out $m \ge 7$. Hence, the maximum degree is exactly $m = 6$.
    
    We now classify the varieties realizing this maximum degree. Suppose that $S_6$ acts faithfully on $X$ over $k$, which implies $S_6 \le \operatorname{Aut}_k(X)$. By base change and the classification of maximum symmetries, the candidates for $X_{\overline{k}}$ are limited to $\mathbb{P}^4_{\overline{k}}$, $(\mathbb{P}^2 \times \mathbb{P}^2)_{\overline{k}}$, $\mathbb{P}(\mathcal{O}(2a) \oplus \mathcal{O})_{\overline{k}}$, and $\mathbb{P}(1,1,1,1,2m)_{\overline{k}}$.

\noindent \textit{The Split Case.}
Suppose $X$ is a split toric variety over $k$. Similar to the $3$-dimensional case, the subgroup $A_6 \le \operatorname{Aut}^0_k(X)$ lifts to a central extension $\Gamma \le \prod_{i=1}^s \operatorname{GL}_{d_i}(k)$. Under the base field condition \hyperref[cond:star]{\rm{($\star$)}}, realizing this faithful representation requires a block size of at least $d_i \ge 4$ (specifically $d_1 = 5$ for $\mathbb{P}^4$). This dimensional requirement strictly rules out the other split candidates, restricting the geometry to $X \cong \mathbb{P}^4_k$.

\noindent \textit{The Non-split Case.}
Suppose $X$ is a non-split toric variety. By \cref{lemma:trivial_forms}, the only geometric candidate that can yield a non-trivial $k$-form not isomorphic to a split toric variety is $\mathbb{P}^2 \times \mathbb{P}^2$. A non-split form of $\mathbb{P}^2 \times \mathbb{P}^2$ must involve a $\mathbb{Z}/2\mathbb{Z}$ action that swaps the two structural blocks (the $\mathbb{P}^2$ factors). Thus, the variety is isomorphic to the Weil restriction $X = \operatorname{Res}_{L/k}(\mathbb{P}^2_L)$ for some quadratic extension $L/k$.

The automorphism group of this Weil restriction is given by:
\[
\operatorname{Aut}_k(X) \cong \operatorname{PGL}_3(L) \rtimes \operatorname{Gal}(L/k).
\]
For an $S_6$-action to exist, we must have an embedding $A_6 \le \operatorname{PGL}_3(L)$, and the Galois group $\operatorname{Gal}(L/k)$ must induce an outer automorphism extending $A_6$ to $S_6$.

According to \cite{conway1985atlas}, the projective representation $A_6 \le \operatorname{PGL}_3(\overline{k})$ is induced by the linear representation of the central extension $3.A_6 \le \operatorname{SL}_3(\overline{k})$. In this representation, the characters of the two non-conjugate elements of order $5$ involve the values $\frac{1 - \sqrt{5}}{2}$ and $\frac{1 + \sqrt{5}}{2}$. The Galois action must swap these elements to induce the necessary outer automorphism, which forces $\operatorname{Gal}(L/k)$ to map $\sqrt{5}$ to $-\sqrt{5}$. Hence, we must have $L = k(\sqrt{5})$.

Furthermore, the group $3.A_6$ possesses a central element of order $3$. In the $3$-dimensional representation, this element acts as the scalar matrix $\omega I$ where $\omega = \frac{-1+\sqrt{-3}}{2}$, strictly requiring that $\sqrt{-3} \in L = k(\sqrt{5})$. However, by the condition \hyperref[cond:star]{\rm{($\star$)}}, $\sqrt{-3} \notin k$, which further implies $\sqrt{-3} \notin k(\sqrt{5})$. This presents a direct contradiction.

Therefore, $S_6$ cannot act on the non-split form $\operatorname{Res}_{L/k}(\mathbb{P}^2_L)$. We conclude that $\mathbb{P}^4_k$ is the unique $4$-dimensional complete simplicial toric variety admitting a faithful $S_6$-action over $k$.
\end{proof}

\subsection{Case \texorpdfstring{$n = 2$}{n=2}}
\label{subsec:dim2}

We now turn to the $2$-dimensional case, where the geometric behavior diverges significantly from higher dimensions. Over the algebraic closure $\overline{k}$, the maximal symmetric action on a toric surface is $S_5$, which is realized uniquely on $\PP^1_{\overline{k}} \times \PP^1_{\overline{k}}$. 

As discussed in the proof of \cref{prop:dim3}, the $k$-forms for $\PP^1 \times \PP^1$ are exactly the split form $\PP^1_k \times \PP^1_k$ and the Weil restriction $\operatorname{Res}_{L/k}(\PP^1_L)$ for some quadratic extension $L/k$. Due to the condition \hyperref[cond:star]{\rm{($\star$)}}, neither of these forms admits a faithful $S_5$-action. Consequently, the maximum degree of a symmetric group acting on a toric surface over $k$ is at most $4$.

Remarkably, unlike the higher-dimensional cases where the maximal action uniquely determines the variety, an $S_4$-action can be realized on infinitely many toric surfaces, including singular and non-split ones. This infinite family arises from a natural lattice construction.

\begin{lemma} \label{lem:S4_action_surface}
    Let $N_1 = \{(x,y,z) \in \Z^3 \mid x+y+z=0\}$ and $N_2 = \Z^3/\Z(1,1,1)$ be lattices of rank $2$. The symmetric group $S_3$ naturally acts on $N_i$ by permuting the coordinates. Let $\Delta$ be a complete fan in $(N_i)_{\mathbb{R}}$ that is invariant under this $S_3$-action. Then the associated complete split toric surface $X(\Delta)$ over $k$ admits a faithful $S_4$-action.
\end{lemma}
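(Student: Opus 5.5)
The plan is to realize $S_4$ as a semidirect product $K \rtimes S_3$. Here $K \cong (\Z/2\Z)^2$ is a finite subgroup of the torus $T(k)$, and $S_3$ acts through the lattice automorphisms preserving $\Delta$. Concretely, $S_4 \cong V_4 \rtimes S_3$, where $V_4=\{1,(12)(34),(13)(24),(14)(23)\}$ and $S_3$ is the stabilizer of $4$. Conjugation by $S_3$ on $V_4 \cong \mathbb{F}_2^2$ is the faithful $2$-dimensional representation $S_3 \cong \operatorname{GL}_2(\mathbb{F}_2)$. So it suffices to find an $S_3$-stable subgroup $K \le T(k)$, $K\cong(\Z/2\Z)^2$, on which $S_3$ acts faithfully.

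First, the $S_3$-action on $X(\Delta)$. Since $\Delta$ is $S_3$-invariant and $S_3$ acts on $N_i$ by lattice automorphisms, each $\sigma\in S_3$ induces a toric morphism $X(\Delta)\to X(\Delta)$ over $k$. The split torus is $T=N_i\otimes \GG_m$, and on $T$ this is the automorphism induced by $\sigma$. This gives $S_3\le \Aut_k(X)$, and the action is faithful because it is faithful on $N_i$. The torus $T$ acts on $X$ by translations, and for $t\in T(k)$ we have $\sigma\circ t\circ\sigma^{-1}=\sigma(t)$. Hence $T(k)\rtimes S_3$ acts on $X$, and the action is faithful: an element $t\sigma$ acting trivially must fix the dense torus, which forces $\sigma=1$ on $N_i$ and $t=1$.

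Next, the choice of $K$. I take $K=T[2](k)=N_i\otimes\mu_2 = N_i/2N_i$, which is defined over any $k$ of characteristic zero since $\mu_2=\{\pm1\}\subset k$. This is $(\Z/2\Z)^2$ with the induced $S_3$-action, and it remains to check that this action on $\mathbb{F}_2^2$ is faithful in both cases.
\begin{itemize}
\item For $N_1$, the mod-$2$ reduction $N_1/2N_1$ is the sum-zero subspace $\{x\in\mathbb{F}_2^3: x_1+x_2+x_3=0\}$. Its three nonzero vectors $(1,1,0),(1,0,1),(0,1,1)$ are permuted by $S_3$ as the three coordinates are.
\item For $N_2$, the reduction is $\mathbb{F}_2^3/\langle(1,1,1)\rangle$. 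Its nonzero elements are the classes of $e_1,e_2,e_3$, again permuted faithfully.
\end{itemize}
In both cases $S_3$ acts on $K$ as $\operatorname{GL}_2(\mathbb{F}_2)$. Therefore $K\rtimes S_3\cong V_4\rtimes S_3\cong S_4$, and this group sits inside $T(k)\rtimes S_3$, so it acts faithfully on $X(\Delta)$.

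The step needing the most care is making the conjugation compatibility $\sigma t\sigma^{-1}=\sigma(t)$ rigorous on all of $X$, not only on $T$. I would handle it through the Cox ring, or simply by density: both sides are automorphisms of the separated variety $X$ that agree on the dense open $T$. A second point to check is the identification $K\rtimes S_3\cong S_4$. This follows once we know that every semidirect product $(\Z/2\Z)^2\rtimes S_3$ with faithful action is isomorphic to $S_4$, because faithful actions are all conjugate to $\operatorname{GL}_2(\mathbb{F}_2)$.
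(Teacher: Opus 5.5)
Your proposal is correct and follows the same route as the paper. Both take $V_4 = T[2] \cong N_i \otimes \mu_2$ inside the split torus, let $S_3$ act through lattice automorphisms preserving $\Delta$, and identify $V_4 \rtimes S_3 \cong S_4$. You also check details the paper only asserts: that $S_3$ acts faithfully on $N_i/2N_i$ for both lattices, that the conjugation relation $\sigma t \sigma^{-1} = \sigma(t)$ extends from $T$ to all of $X$ by density, and that $T(k) \rtimes S_3 \to \Aut_k(X)$ is injective.
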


\begin{proof}
    Let $T \cong \GG_{m,k}^2$ be the algebraic torus associated with $N$. Since $\operatorname{char} k = 0$, the $k$-rational points of $T$ contain the $2$-torsion subgroup $T[2] \cong N \otimes_\Z \{\pm 1\} \cong (\Z/2\Z)^2 \cong V_4$, where $V_4$ is the Klein four-group. Since any toric variety admits a faithful action of its own torus, $V_4$ acts faithfully on $X(\Delta)$.
    
    On the other hand, the $S_3$-action on $N_i$ preserves the fan $\Delta$, so it induces an action on $X(\Delta)$ by toric automorphisms. This $S_3$-action normalizes the torus $T$ and its $2$-torsion subgroup $V_4$. Thus, we obtain an action of the semi-direct product $V_4 \rtimes S_3$ on $X(\Delta)$.
    
    The group $V_4$ consists of the identity and three elements of order $2$. The $S_3$-action faithfully permutes these three non-identity elements. This structural property completely characterizes the symmetric group $S_4$, where $V_4$ sits as its normal Klein four-group. Therefore, $V_4 \rtimes S_3 \cong S_4$, which provides a faithful $S_4$-action on $X(\Delta)$ over $k$.
\end{proof}

\begin{example} \label{ex:S4_surfaces}
    By \cref{lem:S4_action_surface}, choosing appropriate $S_3$-orbits in $N$ as the rays of a complete fan $\Delta$ yields various toric surfaces admitting faithful $S_4$-actions over $k$. We illustrate three distinct examples in \cref{fig:S4_fans}.
\end{example}

    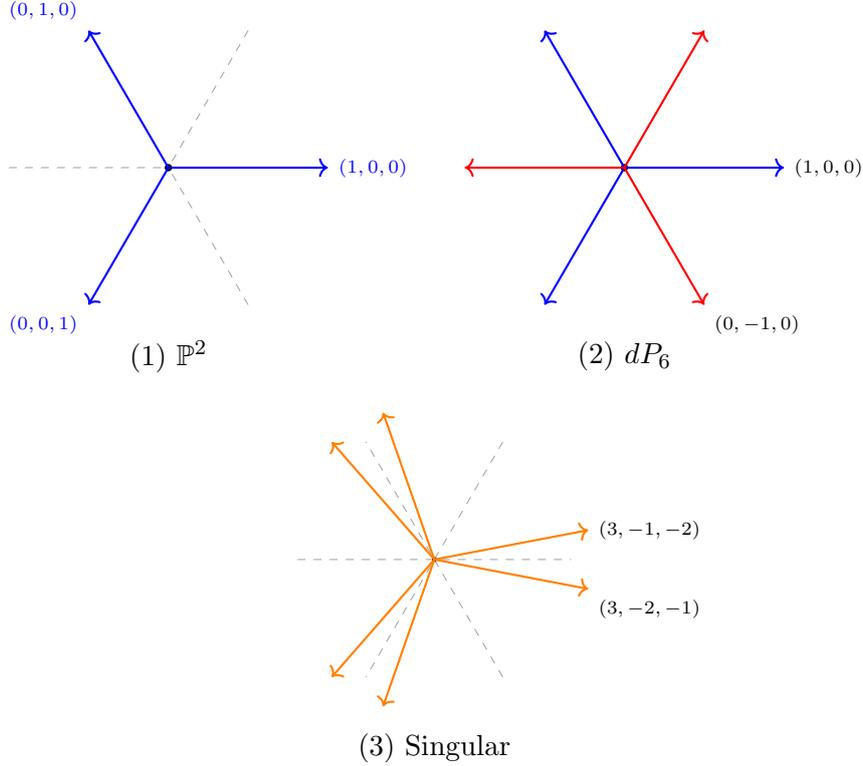
\begin{figure}[htbp]
        \centering
        \begin{tikzpicture}[
            xvec/.style={x={(1cm,0cm)}, y={(-0.5cm, 0.866cm)}, z={(-0.5cm, -0.866cm)}}
        ]
            \begin{scope}[xvec, shift={(-3,0)}, scale=0.7]
                \draw[gray!70, dashed] (-3,0,0) -- (3,0,0);
                \draw[gray!70, dashed] (0,-3,0) -- (0,3,0);
                \draw[gray!70, dashed] (0,0,-3) -- (0,0,3);
                
                \coordinate (v1) at (2,-1,-1);
                \coordinate (v2) at (-1,2,-1);
                \coordinate (v3) at (-1,-1,2);
                
                \fill (0,0,0) circle (2pt);
                \draw[->, thick, blue] (0,0,0) -- (v1) node[anchor=west] {\tiny $(1,0,0)$};
                \draw[->, thick, blue] (0,0,0) -- (v2) node[anchor=south east] {\tiny $(0,1,0)$};
                \draw[->, thick, blue] (0,0,0) -- (v3) node[anchor=north east] {\tiny $(0,0,1)$};
                
                \node[yshift=-2.5cm] at (0,0) {(1) $\PP^2$};
            \end{scope}

            \begin{scope}[xvec, shift={(3,0)}, scale=0.7]
                \draw[gray!70, dashed] (-3,0,0) -- (3,0,0);
                \draw[gray!70, dashed] (0,-3,0) -- (0,3,0);
                \draw[gray!70, dashed] (0,0,-3) -- (0,0,3);
                
                \coordinate (v1) at (2,-1,-1); \coordinate (v2) at (-1,2,-1); \coordinate (v3) at (-1,-1,2);
                
                \coordinate (e1) at (1,1,-2); \coordinate (e2) at (-2,1,1); \coordinate (e3) at (1,-2,1);
                
                \fill (0,0,0) circle (2pt);
                
                \foreach \p in {v1,v2,v3} { \draw[->, thick, blue] (0,0,0) -- (\p); }
                
                \foreach \p in {e1,e2,e3} { \draw[->, thick, red] (0,0,0) -- (\p); }
                
                \node[anchor=west] at (v1) {\tiny $(1,0,0)$};
                \node[anchor=north west] at (e3) {\tiny $(0,-1,0)$};
                
                \node[yshift=-2.5cm] at (0,0) {(2) $dP_6$};
            \end{scope}

            \begin{scope}[xvec, shift={(-2.5,-6)}, scale=0.45]
                \draw[gray!70, dashed] (-4,0,0) -- (4,0,0);
                \draw[gray!70, dashed] (0,-4,0) -- (0,4,0);
                \draw[gray!70, dashed] (0,0,-4) -- (0,0,4);
                
                \coordinate (u1) at (3,-1,-2); \coordinate (u2) at (3,-2,-1);
                \coordinate (u3) at (-1,3,-2); \coordinate (u4) at (-2,3,-1);
                \coordinate (u5) at (-1,-2,3); \coordinate (u6) at (-2,-1,3);
                
                \fill (0,0,0) circle (2pt);
                \foreach \p in {u1,u2,u3,u4,u5,u6} {
                    \draw[->, thick, orange] (0,0,0) -- (\p);
                }
                \node[anchor=west] at (u1) {\tiny $(3,-1,-2)$};
                \node[anchor=north west] at (u2) {\tiny $(3,-2,-1)$};
                
                \node[yshift=-2.5cm] at (0,0) {(3) Singular};
            \end{scope}
            
        \end{tikzpicture}
        \caption{Examples of fans in $N$ generating toric surfaces with $S_4$-actions.}
        \label{fig:S4_fans}
    \end{figure}

    \begin{enumerate}
        \item \textbf{The Projective Plane $\PP^2$}: The fan is generated by the $S_3$-orbit of $(1,0,0)$ in $N_2$, which consists of exactly $3$ rays. This forms $3$ smooth maximal cones, yielding the smooth projective plane $\PP^2$.
        
        \item \textbf{The del Pezzo Surface $dP_6$}: This surface is obtained by blowing up $\PP^2$ at the three torus-invariant points. Geometrically, the exceptional divisors correspond to rays obtained by summing adjacent rays of the $\PP^2$ fan. Thus, the fan consists of exactly \textit{two} distinct $S_3$-orbits of length $3$: the original orbit of $(1,0,0)$ and the newly added orbit of $(0,0,1)$.
        
        \item \textbf{A Singular Toric Surface}: Unlike the above $dP_6$, we can also generate a fan with $6$ rays from a \textit{single} $S_3$-orbit of length $6$, for example, the orbit of $(3,-1,-2)$ in $N_1$. The $2$-dimensional cone generated by adjacent rays, such as $(3,-1,-2)$ and $(3,-2,-1)$, is not smooth since their determinant in the basis of $N_1$ is $\pm 3 \neq \pm 1$. The resulting complete toric surface has $6$ quotient singularities, demonstrating that $S_4$ can act on infinitely many singular Fano toric surfaces.
    \end{enumerate}

\begin{lemma} \label{lemma:s4_nonsplit}
Let $L/k$ be an arbitrary quadratic Galois extension, and let $\langle \tau \rangle = \operatorname{Gal}(L/k) \cong \mathbb{Z}/2\mathbb{Z}$. Consider a lattice $N$ (either $N_1$ or $N_2$) equipped with an $S_3$-action as in \cref{lem:S4_action_surface}. Suppose $\tau$ acts on $N$ via the involution $\tau(v) = -v$. Since this involution corresponds to $-I \in \operatorname{GL}(N)$, it commutes with the $S_3$-action. 

Let $\Delta$ be a complete fan in $N_\mathbb{R}$ that is invariant under the action of $S_3 \times \operatorname{Gal}(L/k)$, and let $X_L(\Delta)$ be the split toric variety over $L$ defined by $\Delta$. If $X$ is the non-split $k$-form of $X_L(\Delta)$ associated with this Galois action, then $X$ admits a faithful $S_4$-action over $k$.
\end{lemma}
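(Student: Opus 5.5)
The plan is to mimic the proof of \cref{lem:S4_action_surface} and make every ingredient Galois-equivariant, so that it descends to $k$. By Galois descent, $\operatorname{Aut}_k(X)$ is the subgroup of $\operatorname{Aut}_L(X_L)$ of elements commuting with the twisted semilinear action of $\tau$. Concretely, this action is $\tau_X = \iota \circ \tau_0$. Here $\tau_0$ is the semilinear action of $\operatorname{Gal}(L/k)$ on the split model $X_L(\Delta) = X_k(\Delta)\times_k L$, acting on the coefficients only. The map $\iota$ is the toric automorphism induced by $-I\in\operatorname{GL}(N)$, which preserves $\Delta$ by assumption. It therefore suffices to exhibit a copy of $S_4 = V_4\rtimes S_3$ inside $\operatorname{Aut}_L(X_L)$ whose elements all commute with $\tau_X$.

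First I take the $S_3$ part. Each $g\in S_3$ acts on $N$ preserving $\Delta$, hence gives a toric automorphism $\phi_g$ of $X_k(\Delta)$ defined over $k$. Such a $\phi_g$ commutes with $\tau_0$, since it is defined over $k$. It also commutes with $\iota$, because $-I$ is central in $\operatorname{GL}(N)$ and the toric automorphisms induced by lattice automorphisms compose according to the group law of $\operatorname{Aut}(N,\Delta)$. So $\phi_g$ commutes with $\tau_X$ and descends.

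Next I take the $V_4$ part, namely the $2$-torsion points $T[2]\subset T_L(L)$ of the split torus. For $t\in T[2]$, translation by $t$ commutes with $\tau_0$, since $t$ has coordinates $\pm1\in k$. Conjugating translation by $t$ with $\iota$ gives translation by $\iota(t)=t^{-1}=t$. Hence translations by $T[2]$ also commute with $\tau_X$, and in fact they are $k$-points of the twisted torus $T'$, whose $2$-torsion is still $N\otimes\{\pm1\}$. The $S_3$-action normalizes $T[2]$ exactly as in \cref{lem:S4_action_surface}. That lemma shows it permutes the three nontrivial elements faithfully: the nontrivial elements of $N\otimes\Z/2\cong(\Z/2)^2$ are permuted as the three transpositions, which holds for both $N_1$ and $N_2$ since $S_3\to\operatorname{GL}_2(\mathbb{F}_2)\cong S_3$ is an isomorphism. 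So the generated group is $V_4\rtimes S_3\cong S_4$ inside $\operatorname{Aut}_L(X_L)$, and all its elements are $\tau_X$-invariant. It therefore lies in $\operatorname{Aut}_k(X)$, and it acts faithfully on $X$ because it acts faithfully on $X_L$.

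The main obstacle is the bookkeeping in the descent step. One must check that the twisted Galois action really is $\iota\circ\tau_0$. One must also verify that the commutation relations hold as automorphisms of the variety and not merely on the lattice: in particular, $\iota$ composed with a translation equals the translation by $\iota(t)$ composed with $\iota$. I would verify this on the open torus using the explicit coordinate formulas $\chi^m\mapsto\chi^{-m}$ and $\chi^m\mapsto\chi^m(t)\chi^m$, and then extend by density.
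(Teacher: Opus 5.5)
Your proposal is correct and follows the same route as the paper: the $S_3$-action descends because $-I$ commutes with it, the translations by $T[2]$ descend because inversion fixes the $2$-torsion, and the two pieces generate $V_4\rtimes S_3\cong S_4$. You make the descent more explicit than the paper does, by writing the twisted action as $\iota\circ\tau_0$ and checking that $S_3$ acts faithfully on $N/2N$ for both lattices, but the argument is the same.
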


\begin{proof}
Since the Galois action $\tau(v) = -v$ commutes with the $S_3$-action on the fan, the $S_3$-action defined over $L$ is Galois-invariant. By the theory of Galois descent, this action uniquely descends to an $S_3$-action on the $k$-form $X$.

Furthermore, the toric variety $X$ contains a non-split algebraic torus $T$ over $k$. The Galois group acts on the points of the split torus $T_L \cong \mathbb{G}_{m,L}^2$ via the inversion $t \mapsto t^{-1}$. The elements of order $2$ in the torus (the $2$-torsion subgroup) are precisely the elements with coordinates $\pm 1$. Since the inversion map fixes $\pm 1$, the full $2$-torsion subgroup $V_4 \cong (\mathbb{Z}/2\mathbb{Z})^2$ is Galois-invariant and therefore descends to a subgroup of the $k$-rational points $T(k)$. 

Thus, the Klein four-group $V_4$ acts faithfully on $X$ over $k$. The descending $S_3$-action normalizes this $T$ and acts on $V_4$ by permuting its three non-identity elements. This group extension realizes the semi-direct product $V_4 \rtimes S_3$, which is isomorphic to the symmetric group $S_4$. We conclude that $X$ admits a faithful $S_4$-action over $k$.
\end{proof}

\begin{remark} \label{remark:imaginary_extension}
Recall that the condition \hyperref[cond:star]{\rm{($\star$)}} assumes that $-1$ is not a sum of two squares in the base field $k$. This immediately implies that $-1$ is not a square in $k$, meaning $\sqrt{-1} \notin k$. Consequently, for any field $k$ satisfying this condition, there always exists a quadratic Galois extension $L = k(\sqrt{-1})$ over $k$. As demonstrated in \cref{lemma:s4_nonsplit}, such an extension can be utilized to systematically construct non-split $k$-forms admitting a faithful $S_4$-action.
\end{remark}

\begin{example}\label{ex:Q22}
Consider the toric surface over $\mathbb{C}$ defined by the complete fan in $(N_2)_{\mathbb{R}}$ whose rays are generated by the $S_3 \times \operatorname{Gal}(\mathbb{C}/\mathbb{R})$-orbit of $(1,0,0)$. This surface is isomorphic to the degree $6$ del Pezzo surface $dP_6$, which is obtained by blowing up $\mathbb{P}^2_{\mathbb{C}}$ at three torus-invariant points. 

The $\mathbb{R}$-form $Q_{2,2}(0,1)$ corresponding to this $\operatorname{Gal}(\mathbb{C}/\mathbb{R})$-action admits a faithful action of the compact torus $S^1 \times S^1$. Because of this real structure, it is not isomorphic to the blow-up of $\mathbb{P}^2_{\mathbb{R}}$ at three real points (cf. \cite[\S 4]{yasinsky2022automorphisms}). 

Explicitly, it is given by the following subvariety:
\begin{multline*}
Q_{2,2}(0,1) = \{([x_0:x_1],[y_0:y_1],[z_0:z_1]) \in \mathbb{P}_\mathbb{R}^1 \times \mathbb{P}_\mathbb{R}^1 \times \mathbb{P}_\mathbb{R}^1 \mid \\
x_0y_0z_1 + x_0y_1z_0 + x_1y_0z_0 - x_1y_1z_1 = 0\}.
\end{multline*}
\end{example}

\begin{lemma}\label{lem:A4_action}
    Let $X$ be a complete simplicial toric surface over $\C$. Then $A_4 \le \Autzero(X)$ if and only if $X$ is isomorphic to one of the following:
    \begin{itemize}
        \item $\PP^2$,
        \item Hirzebruch surface $\mathcal{H}_a$ for some integer $a \in \Z$,
        \item $\PP(1,1,a)$ for some integer $a \in \Z$.
    \end{itemize}
\end{lemma}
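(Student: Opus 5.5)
The plan mirrors the proof of \cref{thm:main2}, with $A_4$ and the $2$-dimensional bound in place of $S_6$ and the $4$-dimensional one. Suppose $A_4 \le \Autzero(X)$. By \cref{thm:aut_structure}, the preimage $\Gamma$ of $A_4$ in $\Aut_g(R(X))$ is an extension of $A_4$ by a subgroup of $G$.

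\emph{Step 1: some block has size at least $2$.} Since $\Gamma$ is an extension of a finite group by a diagonalizable group, I would lift a finite subgroup $\Gamma' \le \Gamma$ mapping onto $A_4$. By \cref{thm:aut_structure}(1), $\Gamma'$ is conjugate into $G_s=\prod_i \operatorname{GL}_{d_i}(\C)$. If every $d_i=1$, then $G_s$ is a torus, so $\Gamma'$ and its quotient $A_4$ would be abelian, which is a contradiction. Hence $d_1 \ge 2$.

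\emph{Step 2: bound the blocks.} The surface analogue of \cref{lem:d_i_bound}, i.e. \cite[Lemma 4.2]{esser2025symmetries} with $n=2$, gives $\sum_i (d_i-1) \le 2$, with equality forcing a product of projective spaces. The possible partitions are then:
\begin{itemize}
\item $d_1=3$, giving $\PP^2$;
\item $(d_1,d_2)=(2,2)$, giving $\PP^1\times\PP^1=\mathcal H_0$;
\item $d_1=2$ with all other blocks of size $1$.
\end{itemize}

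\emph{Step 3: the partition $(2,1,\dots)$.} Here I would copy the computation in the proof of \cref{thm:main2}. From $D_1\sim D_2$ we get $u\in M$ with $\langle u,v_1\rangle=1$, $\langle u,v_2\rangle=-1$ and $\langle u,v_j\rangle=0$ for $j\ge3$. So every remaining ray lies on the line $u^\perp$, and there are one or two of them; if two, they are opposite.
\begin{itemize}
\item With one extra ray, completeness forces $-v_3$ into the cone spanned by $v_1,v_2$. Pairing with $u$ gives equal coefficients, hence $v_1+v_2+a v_3=0$ with $a>0$. The same argument as before shows that $v_1,v_2,v_3$ generate $N$, so $X\cong\PP(1,1,a)$.
\item With two extra rays, the analogue of \cref{claim:linear_indep} shows that $v_1,v_2$ are linearly independent. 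This yields $v_1+v_2=a v_3$ and $v_4=-v_3$, so $X\cong\mathcal H_a$.
\end{itemize}

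\emph{Converse.} The group $A_4\subset\operatorname{PGL}_3$ acts on $\PP^2$ via its $3$-dimensional representation. For $\mathcal H_a$ and $\PP(1,1,a)$, I would argue as in \cref{lem:wp} and \cref{lem:bdl}. The reductive part of $\Autzero$ is $(\operatorname{GL}_2\times\C^*)/\C^*$, which is isomorphic to $\operatorname{GL}_2/\mu_a$. I would lift $A_4$ to the binary tetrahedral group $2.A_4\subset \operatorname{SL}_2$ and check when $-I$ becomes trivial in the quotient. Then I would use Blanchard's lemma to pass from $\PP(1,1,a)$ to its blow-up $\mathcal H_a$.

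\emph{Main obstacle.} I expect the hard part to be the converse. The computation in \cref{lem:wp} suggests that $-1\in 2.A_4$ survives in $\operatorname{GL}_2/\mu_a$ exactly when $a$ is odd, because $-1$ lies in the commutator subgroup and so every character of $2.A_4$ kills it. That would make a parity condition appear, as it does for $S_6$. I would therefore first check this carefully, since the statement as written allows any $a$. If the parity restriction is real, it has to be recorded in the statement (for instance $a$ even, with $\PP(1,1,1)$ excluded or treated as $\PP^2$) before the list can be used later in \cref{prop:dim2_classification}.
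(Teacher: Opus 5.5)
Your ``only if'' argument is the paper's: lift to the Cox ring, use that $A_4$ is non-abelian to get a block of size at least $2$, bound $\sum_i(d_i-1)\le 2$, and redo the linear algebra of \cref{thm:main2}. The paper writes the bound as an equality, which contradicts its own case $(2,1,\dots)$. Your Step~3 actually carries out that case, which the paper only gestures at.

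Your suspicion about the converse is correct, and it means the lemma is false as stated. The paper's converse is a single sentence (``each of these surfaces admits a faithful $\Autzero(X)$-action containing $A_4$''), and it is wrong for odd $a$.

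\emph{Why odd $a$ fails.} For $a\ge 1$ one has $\Autzero(\mathcal{H}_a)\cong\C^{a+1}\rtimes(\operatorname{GL}_2/\mu_a)$. The same Levi factor occurs for $\PP(1,1,a)$ with $a\ge 2$. Since the vector group is torsion-free, every finite subgroup injects into $\operatorname{GL}_2/\mu_a$. For $a$ odd, the map $g\mapsto \det(g)^{(a-1)/2}g$ identifies $\operatorname{GL}_2/\mu_a$ with $\operatorname{GL}_2$. But $A_4$ has no faithful $2$-dimensional representation: its irreducible degrees are $1,1,1,3$, and the linear characters factor through $\Z/3\Z$.

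\emph{Your commutator version.} It also works, provided you run it for an arbitrary copy of $A_4$ rather than a chosen lift. Let $\tilde\Gamma\subset\operatorname{GL}_2$ be its preimage. Then $\tilde\Gamma$ lies in $\C^*\cdot 2.A_4$, and $\tilde\Gamma\cap\C^*I=\mu_a$ because $A_4$ is centerless. Moreover $[\tilde\Gamma,\tilde\Gamma]=[2.A_4,2.A_4]=Q_8\ni -I$. Hence $-1\in\mu_a$, forcing $a$ even.

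\emph{Why even $a$ works, and counterexamples.} For $a$ even, $\operatorname{GL}_2/\mu_a\cong\operatorname{PGL}_2\times\C^*$, which contains $A_4$. The cleanest counterexample is $\mathcal{H}_1=\mathrm{Bl}_p\PP^2$. Its automorphism group is the stabilizer of $p$ in $\operatorname{PGL}_3$, namely $\C^2\rtimes\operatorname{GL}_2$, which contains no $A_4$. Another counterexample is $\PP(1,1,3)$. The correct list is $\PP^2$, $\mathcal{H}_a$ with $a$ even, and $\PP(1,1,a)$ with $a\ge 2$ even.

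Two cautions when you carry this out.
\begin{enumerate}
\item At $a=1$, do not route necessity through Blanchard's lemma. $\PP(1,1,1)=\PP^2$ has a single block of size $3$ and does carry $A_4$, so you must compute $\Aut(\mathcal{H}_1)$ directly as above. The same $m=1$ problem affects \cref{lem:wp}, since $\PP(1,1,1,1,1)=\PP^4$ carries $S_6$.
\item For sufficiency with $a\ge 2$ even, lift the action from $\PP(1,1,a)$ to $\mathcal{H}_a$ using that the unique singular point is fixed, as in \cref{lem:bdl}. Treat $a=0$ separately as $\PP^1\times\PP^1$.
\end{enumerate}

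Finally, \cref{prop:dim2_classification} uses only the ``only if'' direction, so the larger list is harmless there. The amendment is needed for the lemma itself, not for its later use.
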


\begin{proof}
Suppose that $A_4 \le \Autzero(X)$. Following the strategy in \cref{sec:dimension_4}, we consider the lifting of the $A_4$-action to the Cox ring of $X$. This induces a partition of the set of rays $\Delta(1) = \bigsqcup_{i=1}^s \Delta_i$ into orbits under the action of a central extension $\Gamma \le \prod_{i=1}^s \operatorname{GL}_{d_i}(\C)$, where $d_i = |\Delta_i|$. We may assume without loss of generality that $d_1 \ge d_2 \ge \dots \ge d_s$.

Since $A_4$ acts faithfully on $X$, there must be at least one block $\Delta_i$ with $d_i \ge 2$, as $A_4$ admits no non-trivial projective representation of dimension $1$. Considering that $\sum (d_i - 1) = \dim X = 2$, the possible partitions $(d_1, d_2, \dots)$ are limited to the following cases:

\noindent \textbullet\ \textit{Case $d_1 = 3$}: In this case, $X \cong \PP^2$ by \cite[Lemma 4.2]{esser2025symmetries}.
    
\noindent \textbullet\ \textit{Case $(d_1, d_2) = (2, 2)$} Also by \cite[Lemma 4.2]{esser2025symmetries}, $X$ is isomorphic to $\PP^1 \times \PP^1$, which is the Hirzebruch surface $\mathcal{H}_0$.
    
\noindent \textbullet\ \textit{Case $(d_1, d_2, d_3, \dots) = (2, 1, 1, \dots)$}: This case implies that $A_4$ acts non-trivially only on the first block $\Delta_1$. Following the same logic as in the proof of \cref{thm:main2}, the existence of a block of size $2$ combined with remaining blocks of size $1$ forces the variety to be a Hirzebruch surface $\mathcal{H}_a$ or a weighted projective counterpart $\PP(1,1,a)$.

Conversely, each of these surfaces admits a faithful $\Autzero(X)$-action containing $A_4$, which completes the proof.
\end{proof}

\begin{proposition} \label{prop:dim2_classification}
    Let $k$ be a field satisfying the condition \hyperref[cond:star]{\rm{($\star$)}}. Let $X$ be a complete simplicial toric surface over $k$. If the symmetric group $S_4$ acts faithfully on $X$, then $X$ is isomorphic to one of the following:
    \begin{itemize}
        \item a split toric surface constructed in \cref{lem:S4_action_surface},
        \item a non-split toric surface constructed in \cref{lemma:s4_nonsplit},
        \item the Weil restriction $\operatorname{Res}_{L/k}(\mathbb{P}^1_L)$, where $L/k$ is a quadratic Galois extension such that $-1$ is a sum of two squares in $L$.
    \end{itemize}
\end{proposition}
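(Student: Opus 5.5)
The plan is to split $S_4 \le \Aut_k(X)$ into the part inside the identity component and the part acting on the fan. We then classify in three cases according to how $S_4$ meets $\Autzero(X_{\overline{k}})$ and how the Galois group acts on $\Delta$.

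First base change to $\overline{k}$. Then $S_4$ acts on the complete simplicial toric surface $X_{\overline{k}}$. By \cref{thm:aut_structure}(3) we get a homomorphism $S_4 \to \Aut(N,\Delta)/\prod S_{d_i}$, whose kernel is $S_4 \cap \Autzero(X)$. The kernel is a normal subgroup of $S_4$, hence one of $1$, $V_4$, $A_4$, $S_4$.

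\emph{Large kernel ($A_4$ or $S_4$).} Then \cref{lem:A4_action} gives $X_{\overline{k}} \cong \PP^2$, $\mathcal{H}_a$, or $\PP(1,1,a)$.
\begin{itemize}
\item If $X_{\overline{k}} \cong \PP^2$, then $X \cong \PP^2_k$ by \cite[Theorem 5.1.3]{GS06}. This is the $3$-ray fan of \cref{lem:S4_action_surface}.
\item If $X$ is split with a $2$-block and $a \neq 0$, the Cox ring argument from \cref{prop:dim3} applies. We lift $A_4$ through Hilbert 90 to $\Gamma \hookrightarrow \operatorname{GL}_2(k) \times (k^*)^r$. This forces $A_4 \le \operatorname{PGL}_2(k)$, which \hyperref[cond:star]{($\star$)} forbids (it excludes $S_4$, and $A_4$ by the same quaternion-algebra criterion of \cite{beauville2009finite}).
\item For $\mathcal{H}_a$ and $\PP(1,1,a)$ with $a \neq 0$, the Galois action preserves the block structure as in \cref{lemma:trivial_forms}, so these are split and excluded.
\item For $a=0$, the forms of $\PP^1 \times \PP^1$ are $\PP^1_k \times \PP^1_k$, excluded as above, or $\operatorname{Res}_{L/k}(\PP^1_L)$. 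Here $\Autzero = \operatorname{PGL}_2(L)$ must contain $A_4$. The Galois twist $\PP\!\operatorname{GL}_2(L) \rtimes \operatorname{Gal}(L/k)$ can supply the outer part of $S_4$, possibly also the $A_4$, whenever $-1$ is a sum of two squares in $L$. This gives the third item.
\end{itemize}

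\emph{Kernel $V_4$ or trivial.} Now the image of $S_4$ in $\Aut(N,\Delta) \subset \operatorname{GL}_2(\Z)$ is $S_3$ or $S_4$. Finite subgroups of $\operatorname{GL}_2(\Z)$ have order dividing $8$ or $12$ and lie in $D_4$ or $D_6$, so $S_4$ cannot embed. Hence the image is $S_3$ and the kernel is $V_4$.

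The image $S_3$ acts irreducibly on $N_\Q$, so up to conjugacy it is one of the two $\Z$-forms of the $2$-dimensional irreducible representation, namely $N_1$ or $N_2$ with the permutation action. The fan $\Delta$ is then $S_3$-invariant. A key point is that the kernel $V_4 \le \Autzero$ must be the $2$-torsion of the torus. To show this, I would use that $V_4$ is normalized by the lift of $S_3$, and that finite subgroups of $\Autzero$ are reductive, conjugate into $\prod \operatorname{GL}_{d_i}$. The blocks are now permuted transitively by $S_3$, so the $d_i$ are all $1$ unless $X \cong \PP^2$ or $\PP^1 \times \PP^1$, already treated. Hence $\Autzero$ is essentially the torus.

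\emph{Descent.} It remains to account for the Galois action $\operatorname{Gal}(\overline{k}/k) \to \Aut(N,\Delta)$. This action must commute with the $S_3$-action for $S_3$ to descend. The centralizer of $S_3$ in $\operatorname{GL}(N)$ is $\{\pm I\}$. So either the Galois action is trivial, giving the split surfaces of \cref{lem:S4_action_surface}, or it factors through a quadratic $L/k$ acting by $-I$, giving \cref{lemma:s4_nonsplit}.

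The main obstacle is this commutation step. A priori the $S_4$-action need only normalize, not centralize, the Galois image. I would handle it by working in the finite group $\Aut(N,\Delta) \le D_6$: the $k$-rational image $S_3$ must be stable under Galois conjugation, which means the Galois image normalizes $S_3$ inside $D_6$. The Galois image is therefore $\pm I$ or a reflection. In the reflection case, a reflection would realize the Galois action as an element of $S_3$ itself. I would then twist the torus identification by that element, reducing to the split case, while checking that $V_4$ still descends.
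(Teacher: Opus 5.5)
Your route is the paper's route. You take the kernel of $S_4 \to \Aut(N,\Delta)/\prod S_{d_i}$. When the kernel contains $A_4$, you use \cref{lem:A4_action} together with the Cox-ring lifting and the Weil restriction. When the kernel is $V_4$, you use the lattices $N_1$, $N_2$ and the centralizer $\{\pm I\}$. Replacing the citation of \cite[Lemma 4.5]{esser2025symmetries} by the orders of finite subgroups of $\operatorname{GL}_2(\Z)$ is fine. Showing that $V_4$ is the $2$-torsion of $T$ is harmless but unnecessary, since only the isomorphism class of $X$ matters.

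Your reason for $d_i = 1$ in the $V_4$ case ("blocks permuted transitively") is not right as stated, and it omits $\mathcal{H}_a$ and $\PP(1,1,a)$. The clean reason is this: two linearly equivalent rays force all other rays onto a line, so $|\Delta(1)| \le 4$. Then $X_{\overline{k}}$ is one of the surfaces of \cref{lem:A4_action}, whose component group has order at most $2$ and so cannot contain $S_3$.

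\medskip

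\noindent\textbf{The step that fails is your last paragraph.} There is no obstacle there, and the workaround you sketch is wrong.

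\emph{Why commutation is automatic.} Each $g \in S_4$ is a $k$-automorphism, so $g_{\overline{k}}$ commutes with the semilinear Galois action on $X_{\overline{k}}$. Because $T$ is defined over $k$, Galois preserves the normalizer of $T_{\overline{k}}$ in $\Aut(X_{\overline{k}})$. It acts on the quotient $\Aut(N,\Delta)$ by conjugation by $\gamma(\sigma)$, where $\gamma$ is the Galois action on $N$. Every component of $\Aut(X_{\overline{k}})$ meets this normalizer, and all $d_i = 1$ here. So the component group is $\Aut(N,\Delta)$, and the class of each $g$ is fixed by this conjugation. Hence $\gamma(\sigma)$ commutes with \emph{each} element of the image $S_3$, not merely with the subgroup as a set. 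Schur's lemma then gives $\gamma(\sigma) = \pm I$, exactly as in the paper. This is the justification of the commutation you first asserted and then doubted.

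\emph{Why your substitute fails.}
\begin{itemize}
\item ``The Galois image normalizes $S_3$ inside $D_6$'' is vacuous. $S_3$ has index $2$ in $D_6$, so every element normalizes it. This condition does not reduce the Galois image to $\pm I$ or a reflection; rotations of order $3$ or $6$ would pass it too.
\item Twisting the split surface by a cocycle with values in $S_3 \subset \Aut_k(X)$ does not in general give back a split surface. For example, a $dP_6$ on which Galois permutes the boundary $(-1)$-curves is not $k$-isomorphic to the split $dP_6$, where all six are defined over $k$.
\item In any case, a reflection cannot commute with the order-$3$ rotation in $S_3$, so the reflection case never arises.
\end{itemize}
Drop that paragraph and use the elementwise fixedness argument above.
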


\begin{proof}
    Suppose $S_4 \le \Aut_k(X) \le \Aut(X_{\overline{k}})$. We consider the composition $\varphi \colon S_4 \to \Aut(X_{\overline{k}})/\Aut^0(X_{\overline{k}})$. The image $\operatorname{Im}(\varphi)$ must be isomorphic to the quotient of $S_4$ by its normal subgroup $K = S_4 \cap \Aut^0(X_{\overline{k}})$. The normal subgroups of $S_4$ are $\{1\}, V_4, A_4,$ and $S_4$. Therefore, the possible quotients are $S_4, S_3, \Z/2\Z,$ and $\{1\}$. We analyze each case for $\operatorname{Im}(\varphi)$:

    \noindent \textbullet\ \textit{Case $\operatorname{Im}(\varphi) \cong S_4$}: According to \cite[Lemma 4.5]{esser2025symmetries}, the discrete part $\Aut(X)/\Aut^0(X)$ for a toric surface cannot contain a subgroup isomorphic to $S_4$. Thus, this case cannot occur.

    \noindent \textbullet\ \textit{Case $\operatorname{Im}(\varphi) \cong S_3$}: The kernel is $K = V_4 \le \operatorname{Aut}^0(X_{\overline{k}})$. As argued in the proof of \cite[Lemma 4.5]{esser2025symmetries}, this requires $S_3$ to act faithfully on the $2$-dimensional vector space $N_\mathbb{R}$. Up to isomorphism, the faithful $2$-dimensional representation of $S_3$ over $\mathbb{Z}$ is the standard representation on $N_1 = \{(x,y,z) \in \mathbb{Z}^3 \mid x+y+z=0\}$ or $N_2 = \mathbb{Z}^3/\mathbb{Z}(1,1,1)$ given by permuting the coordinates (\cite[Chapter IX, Section 14]{newman1972integral}). This forces the combinatorial structure of $\Delta$ to be exactly the $S_3$-invariant fan described in \cref{lem:S4_action_surface}.

    If $X$ is a split toric surface over $k$, the fan $\Delta$ is fully defined over $k$. Since the variety is split, the $S_4$-action on $X_{\overline{k}}$ naturally descends to $k$. Thus, $X$ is isomorphic to one of the split toric surfaces constructed in \cref{lem:S4_action_surface}.

    Suppose $X$ is a non-split toric surface over $k$, and let $L$ be its splitting field. Over $L$, $X_L$ is a split toric surface admitting an $S_4$-action, meaning its fan is one of those described above. For this $S_4$-action to descend to $k$, the Galois action of $\operatorname{Gal}(L/k)$ on the lattice $N$ must commute with the $S_3$-action and preserve the fan $\Delta$. Since the standard representation of $S_3$ on $N \otimes \mathbb{Q}$ is absolutely irreducible, Schur's Lemma implies that the centralizer of $S_3$ in $\operatorname{GL}(N)$ consists only of the scalar matrices $\pm I$. Because $X$ is non-split, the Galois action on $N$ cannot be trivial. Therefore, the Galois group must act via the involution $\tau(v) = -v$, which corresponds to a quadratic extension $L/k$. This implies that $X$ is precisely the non-split toric surface constructed in \cref{lemma:s4_nonsplit}.

    \noindent \textbullet\ \textit{Case $\operatorname{Im}(\varphi) \cong \mathbb{Z}/2\mathbb{Z}$ or $\{1\}$}: In these cases, the kernel $K$ contains the alternating group $A_4$. This means $A_4$ acts purely through the continuous part of the automorphism group, i.e., $A_4 \le \operatorname{Aut}^0(X_{\overline{k}})$. 

    If $X$ is a split toric surface, we apply a lifting argument similar to that in the proof of \cref{prop:dim3}. The $A_4$-action on $X$ lifts to an action of a central extension $\Gamma$ on the Cox ring over $k$, yielding an embedding $\Gamma \hookrightarrow \prod_{i=1}^s \operatorname{GL}_{d_i}(k)$. Under the condition \hyperref[cond:star]{\rm{($\star$)}}, $A_4$ cannot be embedded into $\operatorname{PGL}_2(k)$, which rules out the possibility of a maximum block size $d_1 = 2$. Consequently, we must have $d_1 = 3$, which uniquely identifies the variety as $X \cong \mathbb{P}^2_k$.

    Now consider the case where $X$ is a non-split toric surface. For any toric surface in \cref{lem:A4_action} other than $\mathbb{P}^1 \times \mathbb{P}^1$, the symmetry of the ray configuration only allows for permutations within the same blocks of rays. Up to the torus action, such $k$-forms are necessarily isomorphic to the split toric surfaces already considered. Thus, the only non-split candidate requiring further investigation is the Weil restriction $X = \operatorname{Res}_{L/k}(\mathbb{P}^1_L)$. The automorphism group of this variety is given by:
    \[
        \operatorname{Aut}_k(X) \cong \operatorname{PGL}_2(L) \rtimes \operatorname{Gal}(L/k).
    \]
    For $S_4$ to be a subgroup of $\operatorname{Aut}_k(X)$, the alternating group $A_4$ must admit a faithful representation in $\operatorname{PGL}_2(L)$. This group-theoretic requirement is equivalent to the condition that $-1$ is a sum of two squares in $L$. Conversely, if this condition is satisfied, the embedding $A_4 \le \operatorname{PGL}_2(L)$ exists, and since $\operatorname{PGL}_2(L)$ contains $S_4$ whenever it contains $A_4$ (via the addition of an appropriate involution), the symmetric group $S_4$ acts faithfully on $X$ over $k$.
    
    This exhausts all cases and concludes the proof.
\end{proof}

\begin{proposition} \label{prop:smooth_S4_MMP}
    Let $X$ be a smooth complete toric surface over $k$ admitting a faithful $S_4$-action. Then running the $S_4$-equivariant Minimal Model Program (MMP) on $X$ terminates with one of the following surfaces:
    \begin{itemize}
        \item the split projective plane $\PP^2_k$,
        \item the split smooth del Pezzo surface $dP_6$,
        \item the Weil restriction $\operatorname{Res}_{L/k}(\PP^1_L)$ (as in \cref{prop:dim2_classification}), or
        \item the non-split toric surface $Q_{2,2}(0,1)_k$ (as in \cref{ex:Q22}).
    \end{itemize}
    
    In particular, the geometrical structure of $X$ is characterized as follows:
    \begin{enumerate}
        \item Any toric blow-up of $\operatorname{Res}_{L/k}(\PP^1_L)$ fails to admit a faithful $S_4$-action. Thus, if the MMP terminates at this Weil restriction, $X$ must be exactly $\operatorname{Res}_{L/k}(\PP^1_L)$ itself.
        \item If $X$ is a split toric surface, it is obtained from $\PP^2_k$ by a finite sequence of $S_4$-equivariant blow-ups at $S_3$-orbits of torus-invariant points.
        \item If $X$ is a non-split toric surface descending from $Q_{2,2}(0,1)$, it is obtained by a finite sequence of $S_4$-equivariant blow-ups from $Q_{2,2}(0,1)$. Over the splitting field $L$, this process corresponds to a tower of blow-ups at $S_3 \times \operatorname{Gal}(L/k)$-orbits of torus-invariant points. Unlike the Weil restriction, this geometrical process generates an infinite family of smooth non-split toric surfaces admitting a faithful $S_4$-action.
    \end{enumerate}
\end{proposition}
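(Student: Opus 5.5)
We start from \cref{prop:dim2_classification}, which says that $X$ is a split surface of \cref{lem:S4_action_surface}, a non-split surface of \cref{lemma:s4_nonsplit}, or $\operatorname{Res}_{L/k}(\PP^1_L)$. We treat these three cases separately and translate each MMP step into fan combinatorics, as in \cref{subsec:equivariant_mmp}.

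First suppose $X$ is split or non-split from the $S_3$-invariant construction. Put $G = S_3$, resp.\ $G = S_3 \times \operatorname{Gal}(L/k)$, acting on $N$, where $N = N_1$ or $N_2$.
\begin{itemize}
\item The standard $S_3$-action has $N^{S_3} = 0$. Hence $\rho^{S_4}(X)$ equals the number of $G$-orbits of rays. The $V_4 \subset T$ acts trivially on $\Pic$, so these orbits agree with the $S_4$-orbits.
\item An $S_4$-equivariant extremal contraction is the removal of one $G$-orbit of rays whose removal keeps the fan smooth. In a smooth complete fan the removable rays are exactly those of $(-1)$-curves, i.e.\ $v_{i-1}+v_{i+1}=v_i$. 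Removing such a $G$-orbit is a $G$-equivariant blow-down of disjoint $(-1)$-curves.
\item Each step lowers the number of orbits, so the MMP terminates. The end product is a smooth $S_3$-invariant (resp.\ $G$-invariant) fan with no $G$-orbit of removable rays, or a $G$-Mori fiber space.
\item A conic bundle would need a $G$-invariant pair of opposite rays, i.e.\ a $G$-invariant line in $N_\R$. This is impossible since the standard representation is irreducible. So the end product has $\rho^G = 1$, i.e.\ a single $G$-orbit of rays, or it is minimal with no contractible orbit.
\end{itemize}
Next, we enumerate smooth complete fans whose ray set is a union of $S_3$-orbits and which admit no removable orbit. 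The orbits of primitive vectors have size $3$ or $6$. A single orbit of size $3$ gives $\PP^2$ (or its dual in $N_1$). Two orbits of size $3$, adjacent by sums, give $dP_6$. A smooth fan with a size-$6$ orbit whose rays are not of minimal type must contain removable rays, since by Noether's formula smooth toric surfaces with at least $5$ rays have a $(-1)$-curve. Using the $S_3$-symmetry, we show that some orbit is removable unless the fan is $\PP^2$ or $dP_6$.

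The extra $\operatorname{Gal}$-symmetry $v \mapsto -v$ rules out $\PP^2$, since $-(1,0,0)$ is not in the orbit. It forces the hexagon fan, i.e.\ $dP_6$ whose $G$-orbit is a single orbit of $6$; this is the form $Q_{2,2}(0,1)$ of \cref{ex:Q22}. In the split case the $dP_6$ orbit pair can be further contracted to $\PP^2$ if one $3$-orbit is removable. We will check carefully which endpoint occurs; the list allows both. This enumeration is the main obstacle. We would first bound the number of rays of a minimal $G$-fan using $\sum (\text{self-intersections}) = 12 - 3|\Delta(1)|$ together with the orbit sizes.

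Reading the MMP backwards gives parts (2) and (3): blow-ups at $G$-orbits of torus-fixed points, i.e.\ insertion of sums of adjacent rays along an orbit. Repeated subdivision produces infinitely many non-isomorphic examples, and each keeps a faithful $S_4$-action by \cref{lemma:s4_nonsplit}.

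For part (1), suppose $X$ is a toric blow-up of $\operatorname{Res}_{L/k}(\PP^1_L)$ with an $S_4$-action. By \cref{prop:dim2_classification}, $X_{\bar k}$ is then either $\PP^1\times\PP^1$ or an $S_3$-invariant fan. An $S_3$-invariant fan would make $X$ a surface of \cref{lem:S4_action_surface}/\cref{lemma:s4_nonsplit}, and those run to $\PP^2$ or $dP_6$, not to the Weil restriction. This contradicts the assumed endpoint, because a Mori fiber space endpoint is determined here by the $N$-action type: the swap of the factors is not compatible with an irreducible $S_3$. Hence $X = \operatorname{Res}_{L/k}(\PP^1_L)$.
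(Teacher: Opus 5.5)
\textbf{The gap: part (2) does not follow from your argument, and as stated it is false.} Reading the MMP backwards only shows that a split $X$ is an $S_4$-equivariant blow-up of \emph{its own} endpoint. Your own list of endpoints contains the single-orbit hexagon $dP_{6,k}$, and that endpoint does occur, exactly when $N=N_1$.

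\begin{itemize}
\item In $N_1$, the only $3$-element $S_3$-orbits are those of $\pm(1,1,-2)$. They span an index-$3$ sublattice: the determinant is $\pm 3$ in the basis $(1,-1,0),(0,1,-1)$.
\item So your ``$\PP^2$ (or its dual in $N_1$)'' is actually $\PP^2/\mu_3$, which is not smooth. Hence no smooth $S_3$-invariant fan in $N_1$ can run to $\PP^2_k$.
\item Already $dP_{6,k}$ with the $N_1$-action has $\rho^{S_4}=1$ (\cref{rem:two_S4_actions}(2)), so it is not an equivariant blow-up of anything.
\item To rule out switching to a different action, add the orbit of $(1,1,-2)$ to the $N_1$-hexagon $\{\pm(1,-1,0),\pm(1,0,-1),\pm(0,1,-1)\}$. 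This gives a smooth $9$-ray fan. Its only $(-1)$-curves are the three new rays; the hexagon rays become $(-2)$-curves.
\item The lattice automorphism group of this fan is just this $S_3$, since $-I$ sends the new rays to non-rays. There are no Demazure roots, so $\Aut_k(X)=T(k)\rtimes S_3$.
\item Any faithful $S_4$ in $T(k)\rtimes S_3$ surjects onto $S_3$. So every faithful $S_4$-action on this surface runs to the $N_1$-hexagon, never to $\PP^2_k$.
\end{itemize}

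Part (2) should therefore read: $X$ is obtained from $\PP^2_k$ if $N=N_2$, and from the single-orbit $dP_{6,k}$ if $N=N_1$. The paper's proof makes the same unjustified step (``terminate at either $\PP^2_k$ or $dP_{6,k}$ \ldots\ Any split surface is thus obtained \ldots\ from $\PP^2_k$''). The check you postponed, ``which endpoint occurs'', is exactly the one that refutes it.

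\textbf{Apart from this, your route to the list of endpoints is correct and differs from the paper's.} The paper picks a $(-1)$-curve and splits on whether its $S_3$-orbit is disjoint or contains neighbours. In the adjacent case it derives $v_0+v_3=0$ and asserts, without details, that this forces the hexagon; it excludes $\mathcal{H}_r$ by counting orbit sizes. You instead:
\begin{itemize}
\item use $N^G=0$ to get $\rho^G=\#(\text{$G$-orbits of rays})$;
\item exclude $G$-conic bundles, because they would need a $G$-stable line in $N_\R$;
\item conclude that the endpoint consists of a single $G$-orbit.
\end{itemize}
This reduction turns your ``main obstacle'' into a two-line check, with no bound on minimal $G$-fans needed. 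On a single orbit all $D_i^2$ are equal, so $\sum_i D_i^2=12-3|\Delta(1)|$ gives:
\begin{itemize}
\item $D_i^2=1$ for an orbit of size $3$, which is $\PP^2$;
\item $D_i^2=-1$ for size $6$, which is the hexagon;
\item $D_i^2=-2$ for size $12$, which is impossible, since $v_{i-1}+v_{i+1}=2v_i$ for all $i$ cannot close up.
\end{itemize}
The non-split case is the same computation with $G=S_3\times\{\pm I\}$.

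\textbf{For (1), drop the appeal to MMP endpoints; it plays no role.} A toric blow-up of $\operatorname{Res}_{L/k}(\PP^1_L)$ keeps the same torus, so $\operatorname{Gal}(L/k)$ still acts on $N$ by the factor swap, which is neither $I$ nor $-I$.
\begin{itemize}
\item Such a surface has at least $5$ rays, so \cref{lem:A4_action} excludes the $A_4\le\Aut^0$ case of \cref{prop:dim2_classification}.
\item In the $S_3$ case, the Galois action would have to centralize an irreducible $S_3$, hence equal $\pm I$, which the swap is not.
\end{itemize}
That is the whole argument, and it is more than the paper's ``destroys the delicate representation''.
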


\begin{proof}
    To execute the $S_4$-equivariant Minimal Model Program (MMP) over $k$, we pass to the splitting field $L$ (where $L=k$ if $X$ is split, and $[L:k]=2$ if $X$ is non-split). Over $L$, $X_L$ is a smooth split toric surface. The $S_4$-action on $X$ induces a faithful action of $S_3$ on the fan $\Delta$, and if $X$ is non-split, the Galois group $\operatorname{Gal}(L/k) = \langle \tau \rangle$ also acts on $\Delta$. We can therefore run the equivariant MMP on $X_L$ with respect to this symmetry group.

    By the standard MMP for toric surfaces, any smooth complete toric surface is obtained by successive blow-ups from either $\PP^2$ or a Hirzebruch surface $\mathcal{H}_r$. Thus, if $|\Delta(1)| \ge 5$, $X_L$ must contain at least one $(-1)$-curve. We track the orbits of these $(-1)$-curves under the relevant symmetry group.

    \noindent \textbf{Case 1: $X$ is a split toric surface ($L=k$).} \\
    The symmetric group acting on $\Delta$ is exactly $S_3$. We must rule out the Hirzebruch surfaces $\mathcal{H}_r$ as possible terminal outputs. The fan of $\mathcal{H}_r$ consists of exactly $4$ rays. Since $4$ is not divisible by $3$, any $S_3$-action on $\Delta(1)$ must fix at least one ray. However, the only vector fixed by the faithful $2$-dimensional representation of $S_3$ is the origin, making such an action impossible. Thus, $\mathcal{H}_r$ never appears.
    
    If $|\Delta(1)| \ge 5$, we consider the configuration of the $S_3$-orbit of a $(-1)$-curve:
    \begin{itemize}
        \item \textit{Disjoint orbit:} If these $(-1)$-curves are mutually non-adjacent, we can equivariantly contract this entire orbit over $k$ to obtain a new smooth toric surface $X'$ still admitting the $S_4$-action.
        \item \textit{Adjacent orbit:} If there are adjacent $(-1)$-curves in the orbit, let their corresponding rays be $v_1$ and $v_2$. In a smooth fan, the properties of $(-1)$-curves force their neighboring rays $v_0$ and $v_3$ (ordered sequentially) to satisfy:
        \[
            v_0 + v_2 = v_1 \quad \text{and} \quad v_1 + v_3 = v_2.
        \]
        Adding these equations yields $v_0 + v_3 = 0$, meaning $v_0 = -v_3$. Because the $S_3$-action is faithful and preserves the fan, this configuration of opposing rays forces the fan to have exactly $6$ rays forming a single $S_3$-orbit. This uniquely identifies the fan as that of the smooth del Pezzo surface $dP_6$. Since further contraction introduces singularities, $dP_6$ is a terminal output.
    \end{itemize}
    Therefore, the split $S_4$-MMP over $k$ must terminate at either $\PP^2_k$ (if the ray count drops to $3$) or $dP_{6,k}$ (if an adjacent orbit is reached). Any split surface is thus obtained by a finite sequence of $S_4$-equivariant blow-ups from $\PP^2_k$.

    \noindent \textbf{Case 2: $X$ is a non-split toric surface ($[L:k]=2$).} \\
    By \cref{prop:dim2_classification}, $X$ is either the Weil restriction $\operatorname{Res}_{L/k}(\PP^1_L)$ or a form associated with the involution $\tau(v) = -v$.

    \textit{Subcase 2a: $X = \operatorname{Res}_{L/k}(\PP^1_L)$.} Over $L$, $X_L \cong \PP^1_L \times \PP^1_L$, and its fan consists of $4$ rays with no $(-1)$-curves. It is already a minimal model. Furthermore, any toric blow-up of this surface destroys the delicate $S_4 \le \operatorname{PGL}_2(L) \rtimes \operatorname{Gal}(L/k)$ representation. Thus, the Weil restriction stands alone as an isolated terminal model that generates no further $S_4$-surfaces via blow-ups.

    \textit{Subcase 2b: $\tau(v) = -v$.} The relevant symmetry group on $\Delta$ is $S_3 \times \operatorname{Gal}(L/k)$. The $S_3 \times \operatorname{Gal}(L/k)$-orbit of a $(-1)$-curve $v$ is the union $S_3v \cup S_3(-v)$. If we encounter an orbit of disjoint $(-1)$-curves, we can equivariantly blow them down over $k$. If the orbit contains adjacent curves, the exact same relation $v_0 + v_3 = 0$ arises, again forcing the $dP_6$ configuration over $L$. 
    
    Crucially, in the $dP_6$ fan, the $S_3$-orbits of $(-1)$-curves have size $3$ (the two sets of disjoint lines). The Galois involution $\tau(v) = -v$ precisely swaps these two disjoint orbits. Consequently, the $S_3 \times \operatorname{Gal}(L/k)$-orbit consists of all $6$ rays. It is impossible to blow down all $6$ rays simultaneously while maintaining a valid surface. Therefore, the MMP must terminate here. Over $k$, this terminal non-split $dP_6$ is exactly $Q_{2,2}(0,1)$.

    Reversing this process, one can systematically blow up $Q_{2,2}(0,1)$ at $S_3 \times \operatorname{Gal}(L/k)$-orbits of torus-invariant points. Because the Galois action ensures the blow-ups descend to $k$, this constructs an infinite tower of smooth non-split toric surfaces admitting a faithful $S_4$-action, completely characterizing the non-split family.
\end{proof}

\begin{remark} \label{rem:two_S4_actions}
    It is worth noting that the smooth del Pezzo surface $dP_{6,k}$ admits two non-conjugate faithful $S_4$-actions over $k$, which exhibit completely different behaviors with respect to the $S_4$-MMP.
    
    The fan of $dP_6$ consists of $6$ rays forming a regular hexagon. Its combinatorial automorphism group is the dihedral group $D_{12}$ of order $12$. The group $D_{12}$ contains two non-conjugate subgroups isomorphic to $S_3$, which precisely correspond to the two different integral representations (lattices) discussed above:
    \begin{enumerate}
        \item In the lattice $N_2 = \Z^3/\Z(1,1,1)$, the fan of $dP_6$ is generated by two distinct $S_3$-orbits of length $3$ (e.g., the orbit of $[1,0,0]$ and the orbit of $[-1,0,0]$). Under this $S_4$-action, the $(-1)$-curves form two separate orbits. We can choose one of these orbits and contract it equivariantly. This allows the $S_4$-MMP to continue, ultimately yielding $\PP^2_k$.
        
        \item In the lattice $N_1 = \{(x,y,z) \in \Z^3 \mid x+y+z=0\}$, the fan of $dP_6$ is generated by a single $S_3$-orbit of length $6$ (e.g., the orbit of $(1,-1,0)$). Under this $S_4$-action, all six $(-1)$-curves form a single orbit. As shown in \cref{prop:smooth_S4_MMP}, any equivariant contraction is impossible because adjacent $(-1)$-curves cannot be contracted smoothly. Thus, under this specific $S_4$-action, $dP_{6,k}$ acts as a terminal output of the $S_4$-MMP.
    \end{enumerate}
\end{remark}

\bibliographystyle{amsalpha}
\bibliography{Ref}

\end{document}